\newtheorem{theorem}{Theorem}[section]
\newtheorem{corollary}[theorem]{Corollary}
\newtheorem{proposition}[theorem]{Proposition}
\newtheorem{remark}[theorem]{Remark}
\newenvironment{proof}[1][Proof]{\textbf{#1.} }{\ \rule{0.5em}{0.5em}}
\newcommand{\mathsym}[1]{{}}
\newcommand{\unicode}[1]{{}}
\begin{document}

\title{Generalized Zermelo navigation on Hermitian manifolds under mild wind}

\author{Nicoleta Aldea$^{3}$, Piotr Kopacz$^{1,2}$}

\affil{\small{$^1$Jagiellonian University, Faculty of Mathematics and Computer Science\\ 6, Prof. St. {\L}ojasiewicza, 30 - 348, Krak\'{o}w, Poland}}
\affil{$^2$Gdynia Maritime University, Faculty of Navigation \\3,  Al. Jana Paw{\l}a II, 81-345, Gdynia, Poland}
\affil{\small{$^3$Transilvania University, Faculty of Mathematics and Informatics\\ 50, Iuliu Maniu, Bra\c{s}ov, Romania}}

\date{}
\date{{\normalsize {\small {e-mail:} \texttt{nicoleta.aldea@lycos.com, piotr.kopacz@im.uj.edu.pl,}}}}
\maketitle

\begin{abstract}
\noindent We generalize and study the Zermelo navigation problem on Hermitian manifolds in the \linebreak  presence of a perturbation $W$ determined by a mild complex velocity vector field \linebreak $||W(z)||_h<||u(z)||_h$, with application of complex Finsler metric of complex Randers type. By admitting space-dependence of ship's relative speed $||u(z)||_h\leq1$ we discuss the projectively related complex Finsler metrics, the geodesics corresponding to the solutions of Zermelo's problem, in particular the conformal case and the connections between the corresponding background Hermitian metric $h$, new Hermitian metric $a$ and resulting complex Randers metric $F$. Moreover, we present some necessary and sufficient conditions for the obtained locally projectively flat solutions. Our findings are also illustrated with several examples.
\end{abstract}

\setstcolor{red} 

\bigskip \noindent \textbf{M.S.C. 2010}: 53B20, 53C21, 53C22, 53C60, 49J15, 49J53.

\smallskip \noindent \textbf{Keywords:} Zermelo navigation,
Hermitian-Finsler manifold, complex Randers metric, perturbation.


\section{Introduction}

\noindent In the navigation problem, formulated initially in low dimensional
Euclidean spaces and solved with original method in variational calculus by
Ernst Zermelo (1931) \cite{zermelo}, the objective is to find the minimum
time trajectory of a ship sailing on a sea $M$, with the presence of a wind
determined by a vector field $W$. We aim to consider the problem as purely
geometric.
The essential generalization of  the problem on
Riemannian manifolds $(M, h)$ has been presented in Finsler geometry  (2004)  \cite{colleen_shen} in the original formulation, that is a
ship proceeds at constant unit speed relative to a surrounding mild perturbation $W$ thought of a wind,
i.e., $h(W,W)<1$. It was pointed out that the solutions are the
geodesics of a real Randers space $(M,F)$, a special Finsler space,
determined by a Riemannian metric $h$ and a vector field $W$, and
conversely, every real Randers metric arises from such a problem. In the
absence of a perturbation the solutions to the problem are simply $h$%
-geodesics of $M$. The important approach in discussing Randers metrics is
navigation representation, namely express Randers metric in terms of a
background Riemannian metric and a vector field. The corresponding condition on strong convexity,
i.e. $|W|_{h}<1$ ensures then that $F$ is a positive definite Finsler
metric  \cite{colleen_shen, chern_shen}.

Having applied Zermelo navigation it was possible to set up a complete list up to
local isometry of strongly convex Randers metrics of constant (Finslerian)
flag curvature. In particular, the resulting Finsler metrics have been
categorized, depending on the sign of their flag curvature. Also, the
formulae of all infinitesimal homotheties $W$ of the three standard
Riemannian space forms were worked out. With the use of the navigation
technique the classification of strongly convex Randers metrics of constant
flag curvature, both local and global, was given. Moreover, this enabled to
discuss projectively flat Randers metrics of constant flag curvature and the
corresponding PDE, the geodesics of Randers spaces of constant curvature
\cite{cr} and, more general, the geodesics of Finsler metrics \cite{huang_mo}.

Thereafter, being motivatad by some real applications the problem was investigated by the second author on Riemannian manifolds, having admitted the space dependence of ship's relative speed, with the presence of both mild and critical wind $W$. Setting as a reference point Zermelo's formulation of the problem, we asked whether a ship had to proceed at a constant maximum speed relative to the surrounding Riemannian sea. This standard assumption we have already dropped in purely geometric approach (cf. \cite{kopi6, kopi7}). Hence, the generalization included the problem in the original setting with $|u|_h=1=const.$ as a particular case. Furthermore, the concept
with admitting variable in space ship's speed gives rise to optimize our
recent study of the search models based on time-minimal paths in
real navigational applications via the Zermelo navigation (cf. \cite{kopi4}).

The navigation problem was also considered in complex Finsler geometry with application of complex Randers metric by the first author \cite{nicoleta}%
. In contrast to a real analogue, a complex Randers metric was
introduced and commenced to be studied much later (2007, cf. \cite{AM}). In order to benefit from similar
interest like this given by real Randers metrics, it was natural to find
some applications for them. Thus, the concepts in real setting presented \cite{colleen_shen} were
referred in \cite{nicoleta}, where the navigation problem was investigated  on a Hermitian manifold. To obtain a solution as a complex Randers
metric, it was necessary to work out the additional geometric assumptions.
Application of navigation representation in a complex landscape enabled to
obtain the concrete examples of complex Randers metrics and to point out the
essential difference in comparison to the analogous problem on Riemannian
manifolds. Namely, the complex Randers metrics are not of constant
holomorphic curvature by perturbation of some Hermitian metrics of constant
holomorphic sectional curvature via the Zermelo navigation. However, many
other properties related to it and the corresponding generalizations can be
considered, for instance projectively flat complex Randers metrics of zero holomorphic curvature, the geodesics corresponding to
some projectively related solutions, a special case of orthogonality, the
complex $(\alpha, \beta)$-solutions which are not of Randers type.

In the current paper we extend and refine our previous findings in a complex original
formulation of the Zermelo navigation problem as well as we obtain some new results with  the generalized setting. In our discussion we combine both our studies of the problem, in particular considering varying in magnitude speed $%
||u(z)||_{h}$ on Hermitian manifolds $M$ for the case of a complex mild
perturbation $W$, that is $||W(z)||_{h}<||u(z)||_{h}\leq 1$ everywhere on $M$%
.

We proceed by first presenting an overview of our paper's content.
In Section 2 some preliminary notions on $n$-dimensional complex Finsler
spaces, in particular on complex Randers spaces and projectively related
complex Finsler metrics, are stated. Also, a necessary and sufficient
condition for a complex Finsler metric to be locally projectively flat
is proved (see Theorem 2.4).
In Section 3 we describe the generalized Zermelo navigation problem on
Hermitian manifolds. Comparing to its Riemannian analogue we show that the
obtained complex Finsler solution is conditioned by an additional
assumption. Moreover, the orthogonal case is pointed out (see Theorem 3.8).
Making use of the generalization presented in a complex setting, in Section 4 we discuss on
projectively related complex Finsler metrics, the geodesics corresponding to
the solutions of Zermelo's problem, in particular the conformal solutions
and the connections between background Hermitian metric $h$, new Hermitian
metric $a$ and a resulting complex Randers metric $F$ (see Theorems 4.9 and
4.15). Furthermore, some necessary and sufficient conditions for obtained locally
projectively flat solutions are given (see Theorem 4.13 and
Corollary 4.17). Our findings are also illustrated with several examples.


\section{Preliminaries}

To begin with, we point out only the basic notions from complex Finsler geometry and
then briefly recall complex Randers metrics, for more information see \cite%
{A-P, Mub, Al-Mu3, C-S, W-Z}. Also,  the main results on projectively related complex Finsler metrics and locally projectively flat metrics are mentioned.

\subsection{Basic notions and notations}

Let $M$ be an $n$-dimensional complex manifold and $z=(z^{k})_{k=\overline{%
1,n}}$ be the complex coordinates in a local chart. The complexified $T_{%
\mathbb{C}}M$ of the real tangent bundle $T_{\mathbb{R}}M$ splits into the
sum of the holomorphic tangent bundle $T^{\prime }M$ and its conjugate $%
T^{\prime \prime }M$. The bundle $T^{\prime }M,$ ($\pi :T^{\prime
}M\rightarrow M)$ is itself a complex manifold and the local coordinates in
a local chart will be denoted by $\mu =(z^{k},\eta ^{k})_{k=\overline{1,n}}.$
These are changed into $(z^{\prime k},\eta ^{\prime k})_{k=\overline{1,n}}$
by the rules $z^{\prime k}=z^{\prime k}(z)$ and $\eta ^{\prime k}=\frac{%
\partial z^{\prime k}}{\partial z^{l}}\eta ^{l}.$ Also, considering $T_{%
\mathbb{R}}(T^{\prime }M)$, its complexified split as $T_{\mathbb{C}%
}(T^{\prime }M)=T^{\prime }(T^{\prime }M)\oplus T^{\prime \prime }(T^{\prime
}M)$ and $V(T^{\prime }M)=\ker \pi _{\ast }\subset T^{\prime }(T^{\prime }M)$
is the vertical sub-bundle. A natural local frame for $T_{\mu }^{\prime
}(T^{\prime }M)$ is $\{\frac{\partial }{\partial z^{k}},\dot{\partial}_{k}:=%
\frac{\partial }{\partial \eta ^{k}}\}$ and the Jacobi matrix of above
transformations gives the changing rules for $\dot{\partial}_{k}$ and $\frac{%
\partial }{\partial z^{k}}.$ A complicate form of the change rule for $\frac{%
\partial }{\partial z^{k}}$ leads to the idea of complex nonlinear
connection, which refers to a complex sub-bundle $HT^{\prime }M$ in $%
T^{\prime }(T^{\prime }M)$ such that $T^{\prime }(T^{\prime }M)=HT^{\prime
}M\oplus VT^{\prime }M$, and $H_{\mu }T^{\prime }M$ is locally spanned by an
adapted frame $\{\delta _{k}:=\frac{\partial }{\partial z^{k}}-N_{k}^{j}\dot{%
\partial}_{j}\}$, (and $\{\delta _{k},\dot{\partial}_{k}\}$ on $T_{\mu
}^{\prime }(T^{\prime }M)),$ where $N_{k}^{j}(z,\eta )$ are local
coefficients of a complex nonlinear connection. By complex conjugation an
adapted frame $\{\delta _{\bar{k}},\dot{\partial}_{\bar{k}}\}$ for $T_{\mu
}^{\prime \prime }(T^{\prime }M)$ is also valid. For more details see \cite%
{Mub}.

A \textit{complex Finsler space} is a pair $(M,F)$, where $F:T^{\prime
}M\rightarrow \mathbb{R}^{+}$ is a continuous function satisfying the
following conditions:

\textit{i)} $F$ is smooth on $T^{\prime }M\backslash 0;$

\textit{ii)} $F(z,\eta )\geq 0$, the equality holds if and only if $\eta =0;$

\textit{iii)} $F(z,\lambda \eta )=|\lambda |F(z,\eta )$\ \  $\forall \ \lambda
\in \mathbb{C}$;

\textit{iv)} the Hermitian matrix $\left( g_{i\bar{j}}(z,\eta )\right) $ is
positive definite, where $g_{i\bar{j}}:=\frac{\partial ^{2}F^{2}}{\partial
\eta ^{i}\partial \bar{\eta}^{j}}$ is the fundamental metric tensor.
Equivalently, this means that the corresponding indicatrix $(F$-indricatrix)
is strongly pseudoconvex.

In complex Finsler geometry, Chern-Finsler complex nonlinear connection
(with the local coefficients $N_{j}^{i}:=g^{\overline{m}i}\frac{\partial g_{l%
\overline{m}}}{\partial z^{j}}\eta ^{l}$) is the main tool of study (cf. \cite{A-P}), From now on, by $\delta _{k}$ we denote the adapted frame
with respect to it. Also, Chern-Finsler complex nonlinear connection induces
a complex spray $S=\eta ^{k}\frac{\partial }{\partial z^{k}}-2G^{k}(z,\eta )%
\dot{\partial}_{k},$ where $2G^{k}=N_{j}^{k}\eta ^{j}.$ By a complex spray $S$
 the \textit{generalized Berwald }spaces are characterized, namely $(M,F)$ is generalized
Berwald iff $G^{i}$ are holomorphic functions with respect to $\eta $, i.e.
$\dot{\partial}_{\bar{h}}G^{i}=0$ (cf. \cite{Al-Mu3, Al-Mu4}).

In \cite{A-P}'s terminology the complex Finsler space $(M,F)$ is \textit{K%
\"{a}hler}$\;$iff $T_{jk}^{i}\eta ^{j}=0$ and \textit{weakly K\"{a}hler }iff%
\textit{\ } $g_{i\overline{l}}T_{jk}^{i}\eta ^{j}\overline{\eta }^{l}=0,$
where $T_{jk}^{i}:=L_{jk}^{i}-L_{kj}^{i}$ and $L_{jk}^{i}=\dot{\partial}%
_{j}N_{k}^{i}.$ We notice that in the particular case of the complex Finsler
metrics which come from Hermitian metrics on $M,$ so-called \textit{purely
Hermitian metrics} in \cite{Mub}, i.e. $g_{i\overline{j}}=g_{i\overline{j}%
}(z)$$,$ both kinds of K\"{a}hler are the same. From now on, a purely
Hermitian metric will be called simply Hermitian metric.

A holomorphic curvature of a complex Finsler space is the analogue of
a holomorphic sectional curvature from Hermitian geometry \cite{A-P}%
. Due to the result given in Lemma 2.1 of \cite{Al-Mu4} the holomorphic
curvature of $F$ in direction $\eta $ can be expressed in terms of spray
coefficients, that is
\begin{equation}
\mathcal{K}_{F}(z,\eta ):=-\frac{4}{F^{4}}g_{k\bar{m}}\frac{\partial G^{k}}{%
\partial \bar{z}^{h}}\bar{\eta}^{h}\bar{\eta}^{h},  \label{II.2}
\end{equation}%
and it depends both on the position $z\in M$ and the direction $\eta .$

Two complex Finsler metrics $F$ and $\tilde{F}$ on a common underlying
manifold $M$ are called\textit{\ conformal }if and only if\textit{\ }$\tilde{%
F}^{2}=\rho (z)F^{2}$ (equivalently, $\tilde{g}_{i\bar{j}}=\rho (z)g_{i\bar{%
j}})$, with a smooth, positive and real valued function $\rho (z)$, cf. \cite{A}. If $\ \rho =const.$, then $F$ and $\tilde{F}$ will be called \textit{%
homothetic}.

An important class of complex Finsler metrics is given by the \textit{%
complex Randers metrics}. Namely, considering $z\in M,$ $\eta \in
T_{z}^{\prime }M,$ $\eta =\eta ^{i}\frac{\partial }{\partial z^{i}}$, $%
a:=a_{i\bar{j}}(z)dz^{i}\otimes d\bar{z}^{j}$ a Hermitian positive metric
and $b=b_{i}(z)dz^{i}$ a differential $(1,0)$-form in \cite{A-M},
a \textit{complex Randers metric} is defined on $T^{\prime }M$ by $F(z,\eta ):=\alpha
(z,\eta )+|\beta (z,\eta )|,$ where $\alpha (z,\eta ):=\sqrt{a_{i\bar{j}%
}(z)\eta ^{i}\bar{\eta}^{j}},$ $|\beta (z,\eta )|=\sqrt{\beta (z,\eta )%
\overline{\beta (z,\eta )}}$ and$\;\beta (z,\eta ):=b_{i}(z)\eta ^{i}$. A
strongly pseudoconvexity property is determined by $||b||^{2}\in (0,1],$
where $||b||^{2}:=a^{\bar{j}i}b_{i}b_{\bar{j}}.$

\subsection{Pojectively related complex Finsler metrics}

Let $\gamma :[0,1]\rightarrow M,$ $\gamma (t)=(\gamma ^{k}(t))=(z^{k}(t)),$ %
$k=\overline{1,n},$ with  a real parameter $t$ and $\frac{d\gamma ^{k}}{dt}=\eta
^{k}(t)$, be a geodesic curve corresponding to the complex Finsler metric $%
F $ on $M.$ In local coordinates it satisfies the equations (see \cite{A-P}%
, p. 101; \cite{Mub}):%
\begin{equation}
\frac{d^{2}\gamma ^{k}}{dt^{2}}+2G^{k}\left(\gamma (t),\frac{d\gamma }{dt}%
\right)=\theta ^{\ast k}\left(\gamma (t),\frac{d\gamma }{dt}\right);\;k=\overline{1,n},
\label{XXX}
\end{equation}%
where $\theta ^{\ast k}=g^{\bar{m}k}g_{h\bar{p}}(L_{\bar{j}\bar{m}}^{\bar{p}%
}-L_{\bar{m}\bar{j}}^{\bar{p}})\eta ^{h}\bar{\eta}^{j},$ with $\theta ^{\ast
k}=0$ in the weakly K\"{a}hler case. The length of the geodesic curve $%
\gamma $ is given by $l_{F}(\gamma (t)):=\int\limits_{0}^{1}F\left(\gamma (t),\frac{%
d\gamma (t)}{dt}\right)dt.$

Two complex Finsler metrics $F$ and $\tilde{F}$ on a common underlying
manifold $M$ are called \textit{projectively related} if any complex
geodesic curve, in \cite{A-P}' s sense, of the former is also complex
geodesic curve for the latter as point sets, and vice versa. This
means that between the spray coefficients $G^{i}$ and $\tilde{G}^{i}$ there
is a so-called \textit{projective change} $\tilde{G}^{i}=G^{i}+B^{i}+P\eta
^{i},$ where $P$ is a smooth function on $T^{\prime }M$ with complex values
and $B^{i}:=\frac{1}{2}(\tilde{\theta}^{\ast i}-\theta ^{\ast i}).$ The
exploration of the projective change leads us to projective curvature
invariants: three of Douglas type and two of Weyl type. Vanishing the
projective curvature invariants of Douglas type defines the complex Douglas
spaces and a projective curvature invariant of Weyl type characterizes the
complex Berwald spaces, for more details see \cite{Al-Mu3}. Also, in \cite%
{Al-Mu4} it is established that $(M,F)$ is a complex Berwald space
if and only if it is generalized Berwald space and weakly K\"{a}hler.

\begin{corollary}
\cite{Al-Mu2} Let $F$ be a generalized Berwald metric on the manifold $M$
and $\tilde{F}$ another complex Finsler metric on $M.$ Then, $F$ and $\tilde{%
F}$ are projectively related if and only if
\begin{eqnarray}
\dot{\partial}_{\bar{r}}\left(\delta _{k}\tilde{F}\right)\eta ^{k} &=&\frac{1}{\tilde{F}%
}\left(\delta _{k}\tilde{F}\right)\eta ^{k}\left(\dot{\partial}_{\bar{r}}\tilde{F}%
\right)\;;\;B^{r}=-\frac{1}{\tilde{F}}\theta ^{\ast l}\left(\dot{\partial}_{l}\tilde{F}%
\right)\eta ^{r} \;;\;P =\frac{1}{\tilde{F}}\left[\left(\delta _{k}\tilde{F}\right)\eta ^{k}+\theta ^{\ast i}\left(%
\dot{\partial}_{i}\tilde{F}\right)\right],  \label{II'}
\end{eqnarray}
for any $r=\overline{1,n}.$ Moreover, the projective change is $\tilde{G}%
^{i}=G^{i}+\frac{1}{\tilde{F}}(\delta _{k}\tilde{F})\eta ^{k}\eta ^{i}$ and $%
\tilde{F}$ is also generalized Berwald.
\end{corollary}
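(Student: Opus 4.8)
\textbf{The key identity.} I would begin from the defining relation of projective relatedness recalled above, $\tilde{G}^{i}=G^{i}+B^{i}+P\eta^{i}$ with $B^{i}=\frac{1}{2}(\tilde{\theta}^{\ast i}-\theta^{\ast i})$, together with a complex Rapcs\'{a}k-type identity. The latter rests on the fact that the adapted frame of the Chern--Finsler nonlinear connection of a complex Finsler metric annihilates the square of that metric: $\delta_{k}F^{2}=0$, and likewise $\tilde{F}^{2}$ is annihilated by the adapted frame of $\tilde{F}$; contracting the latter with $\eta^{k}$ gives $\eta^{k}\partial_{z^{k}}\tilde{F}=2\tilde{G}^{j}\dot{\partial}_{j}\tilde{F}$, whence, with $\delta_{k}$ the frame of $F$,
\begin{equation}
(\delta_{k}\tilde{F})\eta^{k}=2(\tilde{G}^{j}-G^{j})\dot{\partial}_{j}\tilde{F}.\tag{$\star$}
\end{equation}
I would also record three elementary facts: the Euler relation $\eta^{i}\dot{\partial}_{i}\tilde{F}=\frac{1}{2}\tilde{F}$; the vanishing contractions $\theta^{\ast i}\dot{\partial}_{i}F=\tilde{\theta}^{\ast i}\dot{\partial}_{i}\tilde{F}=0$, which hold because $\theta^{\ast k}$ pairs a quantity antisymmetric in two conjugate indices against the symmetric $\bar{\eta}^{j}\bar{\eta}^{m}$; and the homogeneity bidegrees in $(\eta,\bar{\eta})$, namely $(2,0)$ for $G^{i}$ and $\tilde{G}^{i}$ (hence for $\tilde{G}^{i}-G^{i}$) but $(1,1)$ for $B^{i}$, $\theta^{\ast i}$, $\tilde{\theta}^{\ast i}$.

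\textbf{The ``only if'' direction.} Assuming $F\sim\tilde{F}$, I would first apply $\bar{\eta}^{s}\dot{\partial}_{\bar{s}}$ to $\tilde{G}^{i}-G^{i}=B^{i}+P\eta^{i}$: by the bidegrees above the left side is annihilated and the $B^{i}$-term reproduces itself, forcing $B^{i}=-(\bar{\eta}^{s}\dot{\partial}_{\bar{s}}P)\eta^{i}$, so that $B^{i}$, and hence $\tilde{G}^{i}-G^{i}$, is radial. Substituting $\tilde{G}^{j}-G^{j}=B^{j}+P\eta^{j}$ into $(\star)$ and contracting with $\dot{\partial}_{j}\tilde{F}$, using $2B^{j}\dot{\partial}_{j}\tilde{F}=(\tilde{\theta}^{\ast j}-\theta^{\ast j})\dot{\partial}_{j}\tilde{F}=-\theta^{\ast j}\dot{\partial}_{j}\tilde{F}$ and $\eta^{j}\dot{\partial}_{j}\tilde{F}=\frac{1}{2}\tilde{F}$, solves for $P$ and gives the middle formula of (\ref{II'}); writing $B^{i}=\chi\eta^{i}$ and contracting once more against $\dot{\partial}_{i}\tilde{F}$ identifies $\chi=-\frac{1}{\tilde{F}}\theta^{\ast l}\dot{\partial}_{l}\tilde{F}$, i.e. the $B^{r}$-formula, and then $\tilde{G}^{i}-G^{i}=B^{i}+P\eta^{i}=\frac{1}{\tilde{F}}(\delta_{k}\tilde{F})\eta^{k}\eta^{i}$, the ``Moreover'' projective change. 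There remains the first equation of (\ref{II'}); a short computation shows it is equivalent to $\dot{\partial}_{\bar{r}}\!\big(\frac{1}{\tilde{F}}(\delta_{k}\tilde{F})\eta^{k}\big)=0$, i.e. to the holomorphy in $\eta$ of the projective factor, hence --- since $G^{i}$ is holomorphic in $\eta$, $F$ being generalized Berwald --- to the holomorphy in $\eta$ of $\tilde{G}^{i}=G^{i}+\frac{1}{\tilde{F}}(\delta_{k}\tilde{F})\eta^{k}\eta^{i}$, that is, to $\tilde{F}$ being generalized Berwald. I would prove this last assertion by differentiating the ``Moreover'' identity by $\dot{\partial}_{\bar{r}}$ and using $\dot{\partial}_{\bar{h}}G^{i}=0$ together with the Chern--Finsler structure equations; this is the only place where the generalized Berwald hypothesis on $F$ is genuinely used.

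\textbf{The ``if'' direction and the main obstacle.} For the converse I would assume the three equations of (\ref{II'}), read off $B^{i}+P\eta^{i}=\frac{1}{\tilde{F}}(\delta_{k}\tilde{F})\eta^{k}\eta^{i}$, note that the first equation makes this expression holomorphic in $\eta$ so that $G^{i}+\frac{1}{\tilde{F}}(\delta_{k}\tilde{F})\eta^{k}\eta^{i}$ is a holomorphic spray, and then use $(\star)$ contracted with $\dot{\partial}_{i}\tilde{F}$, together with the bidegree and Euler relations, to show that the difference of this spray from $\tilde{G}^{i}$ is radial and annihilated by $\dot{\partial}_{i}\tilde{F}$ and therefore vanishes; hence $\tilde{G}^{i}=G^{i}+B^{i}+P\eta^{i}$ and $F$, $\tilde{F}$ are projectively related. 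The step I expect to be the main obstacle is exactly the control of the anti-holomorphic vertical derivative $\dot{\partial}_{\bar{r}}$: the radial structure of $\tilde{G}^{i}-G^{i}$ and the explicit shapes of $P$ and $B^{r}$ follow from homogeneity and $(\star)$ alone, but proving that $\frac{1}{\tilde{F}}(\delta_{k}\tilde{F})\eta^{k}$ is holomorphic in $\eta$ --- equivalently, that $\tilde{F}$ inherits the generalized Berwald property, which also upgrades the scalar identity $(\star)$ to the full spray difference --- is not a formal consequence of the homogeneity bookkeeping and will require $\dot{\partial}_{\bar{h}}G^{i}=0$ in combination with the structure equations of the Chern--Finsler connection.
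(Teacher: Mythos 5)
First, a point of comparison: the paper itself offers no proof of this corollary --- it is imported verbatim from \cite{Al-Mu2} and used as a black box --- so there is no in-paper argument to measure your plan against; I can only judge it on its own terms. Your toolkit is the right one and your bookkeeping is correct: $\tilde{\delta}_{k}\tilde{F}^{2}=0$ for the Chern--Finsler frame of $\tilde{F}$ gives the identity $(\star)$, the Euler relation $\eta^{i}\dot{\partial}_{i}\tilde{F}=\tfrac12\tilde{F}$ and the antisymmetry argument for $\theta^{\ast i}\dot{\partial}_{i}F=\tilde{\theta}^{\ast i}\dot{\partial}_{i}\tilde{F}=0$ are both valid, and the $(2,0)$ versus $(1,1)$ bidegree count correctly forces $B^{i}$ to be radial and then yields the stated formulas for $B^{r}$, for $P$, and the projective change $\tilde{G}^{i}=G^{i}+Q\eta^{i}$ with $Q:=\frac{1}{\tilde{F}}(\delta_{k}\tilde{F})\eta^{k}$.

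The genuine gap sits exactly where you place it, but your proposed remedy is circular. Differentiating the ``Moreover'' identity by $\dot{\partial}_{\bar{r}}$ gives $\dot{\partial}_{\bar{r}}\tilde{G}^{i}=(\dot{\partial}_{\bar{r}}Q)\eta^{i}$, which only proves that the first equation of (\ref{II'}) and the generalized Berwald property of $\tilde{F}$ are \emph{equivalent to each other}; it proves neither, and no amount of homogeneity bookkeeping will, since $\bar{\eta}$-degree zero in the Euler sense does not imply holomorphy in $\eta$. The missing ingredient is an independent proof that $\dot{\partial}_{\bar{r}}Q=0$. One concrete route: the Chern--Finsler spray of $\tilde{F}$ satisfies $2\tilde{g}_{j\bar{m}}\tilde{G}^{j}=\eta^{k}\eta^{l}\partial_{z^{k}}\tilde{g}_{l\bar{m}}$; applying $\dot{\partial}_{\bar{r}}$, inserting $\dot{\partial}_{\bar{r}}\tilde{G}^{j}=(\dot{\partial}_{\bar{r}}Q)\eta^{j}$ (here $\dot{\partial}_{\bar{r}}G^{j}=0$ is used), and observing that $\dot{\partial}_{\bar{r}}\tilde{g}_{l\bar{m}}=\dot{\partial}_{l}\dot{\partial}_{\bar{m}}\dot{\partial}_{\bar{r}}\tilde{F}^{2}$ is symmetric in $\bar{m},\bar{r}$ yields $(\dot{\partial}_{\bar{m}}\tilde{F}^{2})(\dot{\partial}_{\bar{r}}Q)=(\dot{\partial}_{\bar{r}}\tilde{F}^{2})(\dot{\partial}_{\bar{m}}Q)$; contracting with $\bar{\eta}^{m}$ and using that $Q$ is of bidegree $(1,0)$, so $\bar{\eta}^{m}\dot{\partial}_{\bar{m}}Q=0$, gives $\tilde{F}^{2}\dot{\partial}_{\bar{r}}Q=0$, hence $\dot{\partial}_{\bar{r}}Q=0$. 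A cousin of the same problem affects your converse: contracting $(\star)$ with $\dot{\partial}_{i}\tilde{F}$ only shows that $D^{i}:=\tilde{G}^{i}-G^{i}-Q\eta^{i}$ satisfies the single scalar relation $D^{i}\dot{\partial}_{i}\tilde{F}=0$, which does not make $D^{i}$ radial, let alone zero; to recover the full vector identity you must again work with the uncontracted relation $2\tilde{g}_{j\bar{m}}(\tilde{G}^{j}-G^{j})=\eta^{k}\eta^{l}\partial_{z^{k}}\tilde{g}_{l\bar{m}}-2G^{j}\tilde{g}_{j\bar{m}}$ and its vertical derivatives, not just the scalar Rapcs\'{a}k identity.
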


\noindent Notice that any complex Berwald space is a complex Douglas space
and any Hermitian metric is generalized Berwald and complex Douglas. A
projective curvature invariant of Weyl type%
\begin{equation}
W_{j\bar{k}h}^{i}=K_{j\bar{k}h}^{i}-\frac{1}{n+1}\left(K_{\bar{k}j}\delta
_{h}^{i}+K_{\bar{k}h}\delta _{j}^{i}\right)  \label{120}
\end{equation}%
corresponding to a complex Berwald metric $F,$ where $K_{j\bar{k}%
h}^{i}=-\delta _{\bar{k}}L_{jh}^{i}$ and $K_{\bar{k}h}:=K_{i\bar{k}h}^{i}$,
is obtained in \cite{Al-Mu4}. If $n=1,$ then it is ever vanishing. For $n\geq
2 $ we have the following result

\begin{theorem}
\cite{Al-Mu4} Let $(M,F)$ be a connected complex Berwald space of complex
dimension $n\geq 2.$ Then, $W_{j\bar{k}h}^{i}=0$ if and only if $K_{\bar{m}j%
\bar{k}h}=\frac{\mathcal{K}_{F}}{4}(g_{j\bar{k}}g_{h\bar{m}}+g_{h\bar{k}}g_{j%
\bar{m}}),$ where $K_{\bar{r}j\bar{k}h}:=K_{j\bar{k}h}^{i}g_{i\bar{r}}.$ In
this case, $\mathcal{K}_{F}=c,$ where $c$ is a constant on $M$ and the space
is either Hermitian with $K_{\bar{k}j}=\frac{c(n+1)}{4}g_{j\bar{k}}$ or non
Hermitian with $c=0$ and $K_{j\bar{k}h}^{i}=0.$
\end{theorem}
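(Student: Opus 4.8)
The plan is to prove the two implications separately, reducing everything to index algebra with the $hh$-curvature tensor of the complex Berwald space. The key preliminary remark is that in a complex Berwald space $L_{jh}^{i}=L_{jh}^{i}(z)$ and $\dot{\partial}_{\bar{l}}L_{jh}^{i}=0$, so $K_{j\bar{k}h}^{i}=-\delta_{\bar{k}}L_{jh}^{i}=-\frac{\partial L_{jh}^{i}}{\partial\bar{z}^{k}}$ is a function of the base point alone, and so is every trace of it, in particular $K_{\bar{k}h}=K_{i\bar{k}h}^{i}$. For the implication ``$\Leftarrow$'' I would start from $K_{\bar{m}j\bar{k}h}=\frac{\mathcal{K}_{F}}{4}(g_{j\bar{k}}g_{h\bar{m}}+g_{h\bar{k}}g_{j\bar{m}})$, raise $\bar{m}$ with $g^{\bar{m}i}$ to get $K_{j\bar{k}h}^{i}=\frac{\mathcal{K}_{F}}{4}(g_{j\bar{k}}\delta_{h}^{i}+g_{h\bar{k}}\delta_{j}^{i})$, then trace over $i,j$ (using $\delta_{i}^{i}=n$) to obtain $K_{\bar{k}h}=\frac{(n+1)\mathcal{K}_{F}}{4}g_{h\bar{k}}$ and likewise $K_{\bar{k}j}=\frac{(n+1)\mathcal{K}_{F}}{4}g_{j\bar{k}}$; substituting these three expressions into the definition \eqref{120} of $W_{j\bar{k}h}^{i}$, the two terms cancel and $W_{j\bar{k}h}^{i}=0$.

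For ``$\Rightarrow$'', assuming $W_{j\bar{k}h}^{i}=0$ gives $K_{j\bar{k}h}^{i}=\frac{1}{n+1}(K_{\bar{k}j}\delta_{h}^{i}+K_{\bar{k}h}\delta_{j}^{i})$, and lowering $i$ with $g_{i\bar{m}}$ yields $K_{\bar{m}j\bar{k}h}=\frac{1}{n+1}(K_{\bar{k}j}g_{h\bar{m}}+K_{\bar{k}h}g_{j\bar{m}})$. I would then invoke the symmetry of the $hh$-curvature tensor of a complex Berwald (hence weakly K\"{a}hler) space under interchange of its two conjugated indices, $K_{\bar{m}j\bar{k}h}=K_{\bar{k}j\bar{m}h}$, which is part of the structure theory in \cite{Al-Mu4}; it turns the last relation into $K_{\bar{k}j}g_{h\bar{m}}+K_{\bar{k}h}g_{j\bar{m}}=K_{\bar{m}j}g_{h\bar{k}}+K_{\bar{m}h}g_{j\bar{k}}$. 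Contracting with $g^{\bar{k}h}$ and using $g^{\bar{k}h}g_{h\bar{m}}=\delta_{\bar{m}}^{\bar{k}}$, $g^{\bar{k}h}g_{j\bar{k}}=\delta_{j}^{h}$ and $g^{\bar{k}h}g_{h\bar{k}}=n$, the right-hand side collapses to $(n+1)K_{\bar{m}j}$ and the left one to $K_{\bar{m}j}+(g^{\bar{k}h}K_{\bar{k}h})g_{j\bar{m}}$, so that $K_{\bar{k}j}=\lambda(z)g_{j\bar{k}}$ with $\lambda:=\frac{1}{n}g^{\bar{k}h}K_{\bar{k}h}$ a function of $z$ alone. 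Feeding this back gives $K_{\bar{m}j\bar{k}h}=\frac{\lambda}{n+1}(g_{j\bar{k}}g_{h\bar{m}}+g_{h\bar{k}}g_{j\bar{m}})$.

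To identify $\lambda$ I would rewrite \eqref{II.2} in the generalized Berwald case: since $2G^{k}=N_{l}^{k}\eta^{l}=L_{jl}^{k}\eta^{j}\eta^{l}$, one has $\frac{\partial G^{k}}{\partial\bar{z}^{h}}=-\frac{1}{2}K_{j\bar{h}l}^{k}\eta^{j}\eta^{l}$, hence $\mathcal{K}_{F}F^{4}=2\,K_{\bar{m}j\bar{h}l}\eta^{j}\eta^{l}\bar{\eta}^{h}\bar{\eta}^{m}$; inserting the form of $K_{\bar{m}j\bar{h}l}$ just obtained and using the Euler identity $g_{i\bar{j}}\eta^{i}\bar{\eta}^{j}=F^{2}$ (valid for every complex Finsler metric) makes the right-hand side equal to $\frac{4\lambda}{n+1}F^{4}$, so $\mathcal{K}_{F}=\frac{4\lambda}{n+1}$ and $K_{\bar{m}j\bar{k}h}=\frac{\mathcal{K}_{F}}{4}(g_{j\bar{k}}g_{h\bar{m}}+g_{h\bar{k}}g_{j\bar{m}})$; in particular $\mathcal{K}_{F}=\mathcal{K}_{F}(z)$. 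For the dichotomy I would read the raised identity $K_{j\bar{k}h}^{i}=\frac{\mathcal{K}_{F}}{4}(g_{j\bar{k}}\delta_{h}^{i}+g_{h\bar{k}}\delta_{j}^{i})$ as an equation whose left-hand side is independent of $\eta$: at a point where $\mathcal{K}_{F}\neq 0$, choosing (this needs $n\geq 2$) indices with $i=h\neq j$ forces each $g_{j\bar{k}}$ to be independent of $\eta$ there, i.e.\ $F$ is purely Hermitian on the open set $\{\mathcal{K}_{F}\neq 0\}$, where tracing then gives $K_{\bar{k}j}=\frac{(n+1)\mathcal{K}_{F}}{4}g_{j\bar{k}}$; on that set the K\"{a}hler Schur lemma (again using $n\geq 2$) makes $\mathcal{K}_{F}$ locally constant, so by continuity $\{\mathcal{K}_{F}\neq 0\}$ is both open and closed in the connected $M$. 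Thus either $\mathcal{K}_{F}\equiv c\neq 0$ with $F$ Hermitian, or $\mathcal{K}_{F}\equiv 0$, in which case $K_{\bar{m}j\bar{k}h}=0$, hence $K_{j\bar{k}h}^{i}=0$, and the space may be non-Hermitian.

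The tensor bookkeeping in the two implications and the contraction producing $K_{\bar{k}j}=\lambda g_{j\bar{k}}$ are routine. The two points I expect to be the real obstacles are: (i) the conjugate-index symmetry $K_{\bar{m}j\bar{k}h}=K_{\bar{k}j\bar{m}h}$ for complex Berwald spaces, which must be drawn from the structure theory of \cite{Al-Mu4} rather than established on the spot; and (ii) upgrading $\mathcal{K}_{F}(z)$ to a genuine constant in the Hermitian branch — the Schur-type step — where the hypothesis $n\geq 2$ and the Bianchi identities of the Chern-Finsler connection are indispensable and where connectedness of $M$ is used to propagate the conclusion to all of $M$.
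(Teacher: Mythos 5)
First, a point of order: the statement you are proving is Theorem 2.2 of the paper, which sits in the Preliminaries and is imported verbatim from \cite{Al-Mu4}. The paper contains no proof of it, so there is no internal argument to compare yours against; what follows is an assessment of your reconstruction on its own terms. The ``$\Leftarrow$'' direction and the trace bookkeeping (raising $\bar{m}$, contracting $i=j$ to get $K_{\bar{k}h}=\frac{(n+1)\mathcal{K}_{F}}{4}g_{h\bar{k}}$, substituting into \eqref{120}) are correct, and your identification of $\lambda$ with $\frac{n+1}{4}\mathcal{K}_{F}$ via \eqref{II.2} and $2G^{k}=L_{jl}^{k}\eta^{j}\eta^{l}$ is the right computation.

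The ``$\Rightarrow$'' direction, however, rests on two inputs you do not establish, and one of them hides a circularity. (i) The conjugate-index symmetry $K_{\bar{m}j\bar{k}h}=K_{\bar{k}j\bar{m}h}$ is the engine of the whole argument: without it, $W_{j\bar{k}h}^{i}=0$ says nothing beyond $K_{j\bar{k}h}^{i}=\frac{1}{n+1}(K_{\bar{k}j}\delta_{h}^{i}+K_{\bar{k}h}\delta_{j}^{i})$, which is not yet the claimed form. This symmetry is not a formal consequence of the definition $K_{j\bar{k}h}^{i}=-\delta_{\bar{k}}L_{jh}^{i}$; it requires the K\"{a}hler/Bianchi identities of the Chern--Finsler connection for complex Berwald spaces, so citing it from \cite{Al-Mu4} leaves the key step external to your proof. (ii) After the contraction with $g^{\bar{k}h}$ you declare $\lambda:=\frac{1}{n}g^{\bar{k}h}K_{\bar{k}h}$ to be ``a function of $z$ alone.'' That does not follow: $K_{\bar{k}h}$ is $\eta$-independent in a Berwald space, but $g^{\bar{k}h}$ is not unless the metric is already purely Hermitian, which is precisely what you are trying to prove on the set $\{\mathcal{K}_{F}\neq0\}$. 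Your dichotomy argument then uses the $\eta$-independence of $\mathcal{K}_{F}=\frac{4\lambda}{n+1}$ to force $g_{j\bar{k}}$ to be $\eta$-independent --- i.e.\ you assume the conclusion of (ii) to prove it. The correct route is to read $nK_{\bar{m}j}(z)=\lambda\, g_{j\bar{m}}$ as saying that the product $\lambda\, g_{j\bar{m}}$ is $\eta$-independent, and to extract from this (at points where $K_{\bar{m}j}\neq0$) both the $\eta$-independence of $\lambda$ and of $g_{j\bar{m}}$ simultaneously, treating the degenerate case $K_{\bar{m}j}\equiv0$ separately; as written, your argument does not close this loop. Finally, the Schur step upgrading $\mathcal{K}_{F}(z)$ to a genuine constant is only asserted, and your open-closed argument for $\{\mathcal{K}_{F}\neq0\}$ needs the observation that the closure of a component on which $\mathcal{K}_{F}=c\neq0$ stays inside $\{\mathcal{K}_{F}=c\}$ by continuity --- state this, or the clopen claim is unsupported.
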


Let $\tilde{F}$ be a locally Minkowski complex Finsler metric on the
underlying manifold $M,$ i.e. in any point of $M$ there exist local charts
in which the fundamental metric tensor $\tilde{g}_{i{\bar{j}}}$ depends only
on $\eta .$ Thus, the spray coefficients $\tilde{G}^{i}$ and the functions $%
\tilde{\theta}^{\ast i}$ associated to $\tilde{F}$ vanish in such local
charts, and then the geodesic curves are straight lines. In \cite{Al-Mu4} a
complex Finsler metric, which is projectively related to the locally
Minkowski metric $\tilde{F}$, is called \textit{locally projectively flat}.
The main properties of a locally projectively flat metric stated in \cite%
{Al-Mu4} are as follows

\begin{corollary}
\cite{Al-Mu4}  Let $(M,F)$ be a complex Finsler space. If $F$ is locally projectively flat,
then it is a complex Berwald metric with $W_{j\bar{k}h}^{i}=0$ and $G^{i}=%
\frac{1}{F}\frac{\partial F}{\partial z^{k}}\eta ^{k}\eta ^{i}.$ Moreover,
if $n\geq 2$, its holomorphic curvature in direction $\eta $ is constant,
ever vanish if $F$ is a non Hermitian metric.
\end{corollary}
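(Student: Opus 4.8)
\noindent The plan is to unwind the definition of local projective flatness and feed it into the projective-change machinery recalled above. By definition, $F$ being locally projectively flat means that around every point of $M$ there is a local chart in which $F$ is projectively related to a locally Minkowski metric $\tilde F$, whose fundamental tensor $\tilde g_{i\bar j}$ depends on $\eta$ only. In such a chart $\frac{\partial\tilde g_{l\bar m}}{\partial z^j}=0$, so the Chern--Finsler coefficients of $\tilde F$ vanish, $\tilde N^i_j=0$, and therefore $\tilde G^i=0$, $\tilde L^i_{jk}=0$, $\tilde\theta^{\ast i}=0$, while the adapted frame of $\tilde F$ reduces to $\delta_k=\frac{\partial}{\partial z^k}$. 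In particular $\tilde F$ is generalized Berwald (its spray coefficients are holomorphic in $\eta$, a property stable under holomorphic coordinate changes) and weakly K\"ahler, hence a complex Berwald metric by the characterization of \cite{Al-Mu4} recalled in Section 2.

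First I would apply the Corollary of \cite{Al-Mu2} quoted above, taking the generalized Berwald metric there to be our $\tilde F$ and the second metric to be $F$. Its ``moreover'' part yields at once that $F$ is generalized Berwald and that the projective change reads $G^i=\tilde G^i+\frac{1}{F}(\delta_k F)\eta^k\eta^i$ with $\delta_k$ the Chern--Finsler frame of $\tilde F$; since $\tilde G^i=0$ and $\delta_k=\frac{\partial}{\partial z^k}$ in the Minkowski chart, this is precisely $G^i=\frac{1}{F}\frac{\partial F}{\partial z^k}\eta^k\eta^i$. Reading the same Corollary's formula for the $B$-term with the two metrics in these roles gives $B^r=-\frac{1}{F}\tilde\theta^{\ast l}(\dot\partial_l F)\eta^r=0$ because $\tilde\theta^{\ast l}=0$; on the other hand $B^i=\frac12(\theta^{\ast i}-\tilde\theta^{\ast i})=\frac12\theta^{\ast i}$, whence $\theta^{\ast i}=0$, i.e. $F$ is weakly K\"ahler. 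Being both generalized Berwald and weakly K\"ahler, $F$ is a complex Berwald metric.

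Next, $F$ and $\tilde F$ are complex Berwald metrics that are projectively related, and the Weyl-type quantity $W^i_{j\bar k h}$ is a projective curvature invariant; hence $W^i_{j\bar k h}(F)=W^i_{j\bar k h}(\tilde F)$. But in the Minkowski chart $\tilde L^i_{jh}=0$ gives $\tilde K^i_{j\bar k h}=-\delta_{\bar k}\tilde L^i_{jh}=0$, so $W^i_{j\bar k h}(\tilde F)=0$, and therefore $W^i_{j\bar k h}=0$. Finally, assuming $n\ge 2$, I would apply the preceding Theorem of \cite{Al-Mu4} on each connected component of $M$: the vanishing of $W^i_{j\bar k h}$ places us in the stated dichotomy, so $\mathcal K_F$ equals a constant $c$, and in the non-Hermitian alternative necessarily $c=0$, i.e. the holomorphic curvature vanishes identically.

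The step I expect to be the main obstacle is notational rather than conceptual: one must pin down which Chern--Finsler nonlinear connection the symbol $\delta_k$ refers to in the projective-change formulas of \cite{Al-Mu2} and check that, for the locally Minkowski metric $\tilde F$, it genuinely collapses to $\partial/\partial z^k$, and one must match the identity $B^i=\frac12(\theta^{\ast i}-\tilde\theta^{\ast i})$ together with the explicit expression for $B^r$ with the correct orientation of the projective change $G^i\leftrightarrow\tilde G^i$; getting these right is what pins down the precise coefficient in $G^i=\frac{1}{F}\frac{\partial F}{\partial z^k}\eta^k\eta^i$ (and, absent the cited Corollary, one would instead have to redo a Rapcs\'ak-type computation from the geodesic equation \eqref{XXX} using that the geodesics of $\tilde F$ are straight lines). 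Once these conventions are fixed, the argument is just the chain: definition $\Rightarrow$ Corollary of \cite{Al-Mu2}, giving generalized Berwald, the explicit $G^i$, and $\theta^{\ast i}=0$ $\Rightarrow$ complex Berwald $\Rightarrow$ projective invariance of $W^i_{j\bar k h}$ $\Rightarrow$ the curvature statement via the Theorem of \cite{Al-Mu4}.
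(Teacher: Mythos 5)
Your argument is correct. Note, however, that the paper offers no proof of this statement at all: it is imported verbatim from \cite{Al-Mu4} as a quoted corollary, so there is no in-paper argument to compare yours against. Your reconstruction is consistent with exactly the machinery the paper does quote --- the role-swapped application of the \cite{Al-Mu2} corollary to the pair (locally Minkowski $\tilde F$, $F$) to get generalized Berwald, $\theta^{\ast i}=0$ (hence weakly K\"ahler, hence complex Berwald) and $G^{i}=\frac{1}{F}\frac{\partial F}{\partial z^{k}}\eta ^{k}\eta ^{i}$ in the Minkowski charts, then projective invariance of $W_{j\bar{k}h}^{i}$ and the quoted Weyl-type theorem for the curvature statement --- and it matches the way the paper itself uses this corollary in the discussion preceding Theorem 2.4. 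The only points worth flagging are minor: the equivalence $\theta^{\ast i}=0\Leftrightarrow$ weakly K\"ahler deserves one line (it follows from invertibility of $g^{\bar m k}$ and conjugation of the torsion condition), and the constancy of $\mathcal{K}_F$ is per connected component, which you already note.
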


\noindent Some refinements of the above results on locally projectively flat metrics
can still be made. Indeed, if $F$ is a complex Berwald metric on domain $D$
from $\mathbb{C}^{n}$ with $G^{i}=\frac{1}{F}\frac{\partial F}{\partial z^{k}%
}\eta ^{k}\eta ^{i}$, then according to Theorem 3.7 from \cite{Al-Mu2}, it
is projectively related with standard Euclidean metric on $D,$ and so it is
locally projectively flat. Conversely, if $F$ is locally projectively flat,
then by the last Corollary it is complex Berwald and $G^{i}=\frac{1}{F}%
\frac{\partial F}{\partial z^{k}}\eta ^{k}\eta ^{i}.$ So, we can thus
summarize

\begin{theorem}
Let $F$ be a complex Finsler metric on domain $D$ from $\mathbb{C}^{n}.$
Then, $F$ is locally projectively flat if and only if it is complex Berwald
and $G^{i}=\frac{1}{F}\frac{\partial F}{\partial z^{k}}\eta ^{k}\eta ^{i}.$
\end{theorem}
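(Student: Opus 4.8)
The plan is to prove the two implications separately; each of them reduces to a statement that is already in hand (Corollary 2.3 above, respectively Theorem 3.7 of \cite{Al-Mu2}), so the theorem is in essence a repackaging of those.

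For the direction ``$F$ locally projectively flat $\Rightarrow$ $F$ complex Berwald with $G^{i}=\frac{1}{F}\frac{\partial F}{\partial z^{k}}\eta^{k}\eta^{i}$'', nothing new is needed: this is exactly what the last Corollary (Corollary 2.3) asserts, so I would simply quote it. Note that this half is valid on an arbitrary complex Finsler manifold; the restriction to a domain $D\subset\mathbb{C}^{n}$ is used only in the converse, where a canonical locally Minkowski target metric must be available.

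For the converse, assume $F$ is complex Berwald on $D\subset\mathbb{C}^{n}$ with $G^{i}=\frac{1}{F}\frac{\partial F}{\partial z^{k}}\eta^{k}\eta^{i}$. The idea is to exhibit a locally Minkowski metric to which $F$ is projectively related; on a domain of $\mathbb{C}^{n}$ the obvious candidate is the standard Euclidean metric $\tilde{F}(\eta)=\sqrt{\delta_{i\bar{j}}\eta^{i}\bar{\eta}^{j}}$, whose fundamental tensor is constant --- hence $\tilde{F}$ is (trivially) locally Minkowski, with $\tilde{G}^{i}=0$ and $\tilde{\theta}^{\ast i}=0$, and its geodesics are straight lines. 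Theorem 3.7 of \cite{Al-Mu2} asserts precisely that a complex Berwald metric on $D$ whose spray coefficients have the form $G^{i}=\frac{1}{F}\frac{\partial F}{\partial z^{k}}\eta^{k}\eta^{i}$ is projectively related to this Euclidean metric; by the definition of local projective flatness (projectively related to a locally Minkowski metric), $F$ is then locally projectively flat. Combining the two directions gives the equivalence.

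Since each implication reduces to a cited theorem, there is no genuine obstacle here. If one insisted on a self-contained argument for the converse, the only substantive step would be to verify the projective-change equations of Corollary 2.1 for the pair $(\tilde{F},F)$ --- that is, to check that a projective change $\tilde{G}^{i}=G^{i}+B^{i}+P\eta^{i}$ with $B^{i}$ and $P$ of the prescribed form genuinely relates the two sprays --- using that $F$ is generalized Berwald (so Corollary 2.1 applies) together with the hypothesis on $G^{i}$; this is the sole computational point, and it is exactly what is carried out in \cite{Al-Mu2}.
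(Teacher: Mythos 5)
Your proposal is correct and follows essentially the same route as the paper: the forward implication is quoted from Corollary 2.3, and the converse invokes Theorem 3.7 of \cite{Al-Mu2} to establish that a complex Berwald metric on $D$ with $G^{i}=\frac{1}{F}\frac{\partial F}{\partial z^{k}}\eta ^{k}\eta ^{i}$ is projectively related to the standard Euclidean metric, hence locally projectively flat. The only difference is expository (you make explicit that the Euclidean metric is the locally Minkowski target and where the domain hypothesis is used), which matches the paper's intent.
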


\section{Towards generalization}

\bigskip Let $M$ be an $n$-dimensional complex manifold and $z\in M$ the
base point of the tangent vectors $\eta \in T_{z}^{\prime }M,$ $\eta =\eta
^{i}\frac{\partial }{\partial z^{i}}.$ Considering the background Hermitian
metric $h:=h_{i\bar{j}}(z)dz^{i}\otimes d\bar{z}^{j}$ on $M$, the norm of $%
\eta $ is $||\eta ||_{h}:=\sqrt{h(\eta ,\bar{\eta})}=\sqrt{h_{i\bar{j}%
}(z)\eta ^{i}\bar{\eta}^{j}}$ and it means the necessary time trajectory of
a ship sailing on a sea represented by Hermitian manifold $(M,h)$, with the
presence of a wind determined by a tangent vector field $W\in T_{z}^{\prime
}M,$ $W=W^{i}\frac{\partial }{\partial z^{i}}.$ From this point of view the
pair $(h,W)$ will be called \textit{navigation data} (or \textit{Zermelo structure}) and, setting a spatial
function $||u(z)||_{h}$ on $M$, by $(h,||u||_{h},W)$ we mean \textit{%
generalized navigation data} (or \textit{generalized Zermelo structure}).


\subsection{Formula for modified Minkowski complex norm $F$}

In contrast to \cite{nicoleta} we do not start with the fact $||u||_{h}=%
\sqrt{h(u,\bar{u})}=1=const.$ So far $W$ indicated the mild breeze with $%
||W||_{h}\in \lbrack 0,1)$. Now, we introduce the new assumption $%
||u||_{h}\in (||W||_{h},1]$. 
Both wind and ship's own speed are space-dependent. In what follows we
assume that $v=u+W$ instead of $v=u-W$ which was used in \cite{nicoleta}.
Therefore, into $h(u,\bar{u})$ we substitute $u=v{-}W$. Then $\text{Re}h(v,%
\bar{W})=||v||_{h}||W||_{h}\cos \theta ,$ where \ $\theta \equiv
\measuredangle \{v,W\}$ and%
\begin{equation*}
f(z)=:||u||_{h}=\sqrt{h(u,\bar{u})}=\sqrt{h(v-W,\bar{v}-\bar{W})}%
=||v-W||_{h}=\sqrt{||v||_{h}^{2}-2\text{Re}h(v,\bar{W})+||W||_{h}^{2}},
\end{equation*}%
where $f:M\rightarrow (||W||_{h},1]$ is a smooth, positive, real valued
function which depends on $z$ and so, on $\bar{z}.$ Denoting $\tilde{%
\varepsilon}=||u||_{h}^{2}-||W||_{h}^{2}$ we thus get $%
||v||_{h}^{2}-2||v||_{h}||W||_{h}\cos \theta -\tilde{\varepsilon}=0$. Since $%
||W||_{h}<||u||_{h}$ the resultant $v$ is always positive, hence $%
||v||_{h}>0 $. Solving the resulting quadratic equation and choosing the
root that guarantees positivity yield
\begin{equation}
||v||_{h}^{2}=\text{Re}h(v,\bar{W})+\sqrt{[\text{Re}h(v,\bar{W})]^{2}+\tilde{%
\varepsilon}||v||_{h}^{2}}.  \label{3.1}
\end{equation}%
Since $F(z,v)=1$, we see that
\begin{equation}
F(z,v)=\frac{||v||_{h}^{2}}{||v||_{h}^{2}}=||v||_{h}^{2}\frac{q-p}{%
q^{2}-p^{2}}=\frac{\sqrt{[\text{Re}h(v,\bar{W})]^{2}+||v||_{h}^{2}\tilde{%
\varepsilon}}{-}\text{Re}h(v,\bar{W})}{\tilde{\varepsilon}},  \label{3.2}
\end{equation}%
where $q:=\sqrt{[\text{Re}h(v,\bar{W})]^{2}+||v||_{h}^{2}\tilde{\varepsilon}}
$ and $p:=$Re$h(v,\bar{W}).$

\begin{remark}
If $W\neq 0$, i.e. $||W||_{h}\in (0,1)$, then we can consider $\varphi =%
\text{arg }h(v,\bar{W}).$ Thus, $\text{Re}h(v,\bar{W})=|h(v,\bar{W})|\cos
\varphi $ and the above relation can be given as follows
\begin{equation}
F(z,v)=\frac{\sqrt{\left[ |h(v,\bar{W})|\cos \varphi \right]
^{2}+||v||_{h}^{2}\tilde{\varepsilon}}{-}|h(v,\bar{W})|\cos \varphi }{\tilde{%
\varepsilon}},  \label{3.3}
\end{equation}%
where $\varphi =\varphi (z)$.
\end{remark}

\noindent Next, we need to deduce $F(z,\eta )$ for an arbitrary $\eta \in T^{\prime }M$%
. Every non-zero $\eta $ is expressible as a complex multiple $\lambda \in
\mathbb{C}$ of some $v$ with $F(z,v)=1$.
We consider $ \varphi $ which is constant. This implies that $F$ is homogeneous, that is
$F(z,\eta )=F(z,\lambda v)=|\lambda |F(z,v)=|\lambda |$. Using this
homogeneity with the assumption that $\varphi $ is a constant angle and the
formula derived for $F(z,v)$ the result is
\begin{equation}
F(z,\eta )=\frac{\sqrt{[\text{Re}h(\eta ,\bar{W})]^{2}+||\eta
||_{h}^{2}(||u||_{h}^{2}-||W||_{h}^{2})}}{||u||_{h}^{2}-||W||_{h}^{2}}{-}%
\frac{[\text{Re}h(\eta ,\bar{W})]}{||u||_{h}^{2}-||W||_{h}^{2}}.  \label{3.4}
\end{equation}

\noindent  Also, in our approach the case with the absence of a wind, i.e., $W=0$ is
not neglected. Then the solution (\ref{3.4}) to the Zermelo navigation
problem becomes $F(z,\eta )=\frac{1}{||u||_{h}}||\eta ||_{h}.$ Therefore,
when a wind vanishes the resulting purely Hermitian metric is conformal to
the background metric $h$. {It is clear that for $||u||_{h}=1$ the solution
is given by $F(z,\eta )=||\eta ||_{h}=\sqrt{h_{i\bar{j}}\eta ^{i}\bar{\eta}%
^{j}}.$ }

{Now, if $W\neq 0$, i.e., $||W||_{h}\in (0,1)$ then we obtain}%
\begin{equation}
F(z,\eta )=\frac{\sqrt{|h(\eta ,\bar{W})|^{2}\cos ^{2}\varphi +||\eta
||_{h}^{2}(||u||_{h}^{2}-||W||_{h}^{2})}}{||u||_{h}^{2}-||W||_{h}^{2}}{-}%
\frac{|h(\eta ,\bar{W})|\cos \varphi }{||u||_{h}^{2}-||W||_{h}^{2}}.
\label{ran_mira}
\end{equation}

Let $||W||_{h}\in (0,1)$ and $\varphi $ be a constant angle such that $\cos
\varphi <0$. The metric \eqref{ran_mira} is of complex Randers type. By
hypothesis, $||W||_{h}<||u||_{h}$, hence $\tilde{\varepsilon}>0$. Thus, the
formula for $F(z,\eta )$ is positive whenever $\eta \neq 0$ and $F(z,0)=0$.
Also, $F$ is complex homogeneous with respect to $\eta $, $F(z,\lambda \eta
)=|\lambda |F(z,\eta )$, where $\lambda \in \mathbb{C}$ and smooth on $%
T^{\prime }M\setminus 0$. The resulting complex Randers metric is composed
of the modified new Hermitian metric and $(1,0)$-form in comparison to the
analogous terms for the case of constant unit ship's speed through the water
investigated in \cite{nicoleta}. {Likewise, if $\varphi $ is a constant
angle such that $\cos \varphi >0$ then \eqref{ran_mira} stands for complex $%
(\alpha ,\beta )$- functions, i.e. $F=\alpha -|\beta |$, however they are
not of complex Randers type }but under some additional assumptions, they can
be complex Finsler metrics.

\begin{remark}
The complex homogeneity of \eqref{ran_mira} is conditioned by the demand for
the angle $\varphi $ to be constant. Without this additional condition, that
is $\varphi $ depends on $z,$ the resulting metric \eqref{ran_mira} is only
real positive homogeneous, i.e. $F(z,c\eta )=cF(z,\eta ),$ where $c\in
\mathbb{R}_{+}$. Hence, it cannot be a complex Finsler metric. Indeed, if $%
\eta =cv$ then $\arg h(\eta ,\bar{W})=\arg h(cv,\bar{W})=\arg c$ $h(v,\bar{W}%
)=\arg h(v,\bar{W})$ for any $c\in \mathbb{R}_{+}\mathbf{.}$
\end{remark}

Further on, we pay more attention to two particular values of $\cos \varphi
, $ namely $\cos \varphi =0$ and $\cos \varphi =-1$, with $W\neq 0.$ The
condition $\cos \varphi =0$ implies that Re$h(v,\bar{W})=0$ and so, $\cos
\theta =\frac{\text{Re}h(v,\bar{W})}{||v||_{h}||W||_{h}}=0$. This means that
$v$ and $W$ are orthogonal. Conversely, if $v$ and $W$ are orthogonal then Re%
$h(v,\bar{W})=0$. We thus get Re$h(v,\bar{W})=\left\vert h(v,\bar{W}%
)\right\vert \cos \varphi =0, $what gives $\cos \varphi =0.$ So, we have
proved

\begin{corollary}
$\cos \varphi =0$ if and only if $v$ and $W$ are orthogonal.
\end{corollary}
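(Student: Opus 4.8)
The plan is to unwind the two notions appearing in the statement and to bridge them through the polar form of the complex scalar $h(v,\bar{W})$. Recall that $\varphi:=\arg h(v,\bar{W})$, so that $h(v,\bar{W})=|h(v,\bar{W})|(\cos\varphi+i\sin\varphi)$ and hence $\text{Re}\,h(v,\bar{W})=|h(v,\bar{W})|\cos\varphi$. On the other side, the angle $\theta\equiv\measuredangle\{v,W\}$ introduced in the derivation of \eqref{3.1} is fixed by $\text{Re}\,h(v,\bar{W})=||v||_{h}||W||_{h}\cos\theta$, and $v$ and $W$ being orthogonal means precisely $\cos\theta=0$. So the whole statement reduces to transferring the vanishing of $\text{Re}\,h(v,\bar{W})$ between these two expressions.

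For the forward implication I would assume $\cos\varphi=0$; the polar identity immediately gives $\text{Re}\,h(v,\bar{W})=0$. Dividing by $||v||_{h}||W||_{h}$ is legitimate because $||v||_{h}>0$ (already established from the hypothesis $||W||_{h}<||u||_{h}$, which makes the resultant $v$ nonzero) and $||W||_{h}>0$ (since $W\neq0$ is the standing assumption in this part of Section 3); this yields $\cos\theta=0$, i.e.\ $v\perp W$. For the converse I would assume $v\perp W$, i.e.\ $\cos\theta=0$, so that $\text{Re}\,h(v,\bar{W})=||v||_{h}||W||_{h}\cos\theta=0$, and therefore $|h(v,\bar{W})|\cos\varphi=0$.

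The only step requiring care is the degenerate case $h(v,\bar{W})=0$, in which the argument $\varphi$ is a priori undefined, so that the factorization $|h(v,\bar{W})|\cos\varphi=0$ cannot be divided through. I would handle this by invoking the convention under which $\varphi$ is considered in Remark 3.1 and around \eqref{ran_mira} (namely that $\varphi$ is a well-defined, in fact constant, angle), so $|h(v,\bar{W})|\neq0$ and one may cancel it to obtain $\cos\varphi=0$; alternatively one may simply note that within the regime where the corollary is stated $h(v,\bar{W})$ does not vanish identically. This bookkeeping about when the angle is defined is the main — and essentially the only — obstacle; the remainder is a direct substitution of definitions.
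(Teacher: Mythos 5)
Your proposal is correct and follows essentially the same route as the paper: both directions are handled by equating $\text{Re}\,h(v,\bar{W})=|h(v,\bar{W})|\cos\varphi$ with $\text{Re}\,h(v,\bar{W})=||v||_{h}||W||_{h}\cos\theta$ and transferring the vanishing of the real part. Your extra remark about the degenerate case $h(v,\bar{W})=0$, where $\varphi$ is undefined, is a point the paper's own proof silently passes over, so flagging it is a small improvement rather than a deviation.
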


\noindent Therefore, if $v$ and $W$ are orthogonal then the solution \eqref{ran_mira}
is a purely Hermitian metric
\begin{equation}
F(z,\eta )=\frac{1}{\sqrt{||u||_{h}^{2}-||W||_{h}^{2}}}||\eta ||_{h}=\frac{1%
}{\sqrt{||u||_{h}^{2}-||W||_{h}^{2}}}\sqrt{h_{i\bar{j}}\eta ^{i}\bar{\eta}%
^{j}},  \label{3.5}
\end{equation}%
which is conformal to the background Hermitian metric $h$. Then $\frac{1}{%
\sqrt{||u||_{h}^{2}-||W||_{h}^{2}}}$ depends on $z$ and plays the role of a
conformal factor. Thus, $g_{i\bar{j}}(z,\eta ):=a_{i\bar{j}}(z)=\frac{1}{%
\tilde{\varepsilon}}h_{i\bar{j}}(z)$. A necessary and sufficient condition
for $\cos \varphi =-1$ can be formulated as follows

\begin{corollary}
$\cos \varphi =-1$ if and only if $h(v,\bar{W})$ is negative real valued,
i.e. $h(v,\bar{W})<0$.
\end{corollary}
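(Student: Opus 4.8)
The plan is to prove the equivalence $\cos\varphi = -1 \iff h(v,\bar W)<0$ directly from the definitions, using the polar decomposition of the complex number $h(v,\bar W)$ together with the sign constraints already established. Recall that $\varphi := \arg h(v,\bar W)$, so by definition $h(v,\bar W) = |h(v,\bar W)|\,(\cos\varphi + i\sin\varphi)$, and hence $\mathrm{Re}\,h(v,\bar W) = |h(v,\bar W)|\cos\varphi$ while $\mathrm{Im}\,h(v,\bar W) = |h(v,\bar W)|\sin\varphi$. The whole statement is essentially the observation that a nonzero complex number is a negative real number precisely when its argument is $\pi$ (mod $2\pi$), i.e. when $\cos\varphi=-1$.

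First I would handle the forward direction. Assume $\cos\varphi=-1$. Then $\sin^2\varphi = 1-\cos^2\varphi = 0$, so $\sin\varphi=0$, giving $\mathrm{Im}\,h(v,\bar W)=0$; thus $h(v,\bar W)$ is real valued and equals $\mathrm{Re}\,h(v,\bar W)=|h(v,\bar W)|\cos\varphi = -|h(v,\bar W)|\le 0$. To upgrade this to a strict inequality $h(v,\bar W)<0$ I must rule out $h(v,\bar W)=0$: but if $h(v,\bar W)=0$ then $|h(v,\bar W)|=0$ and $\arg h(v,\bar W)$ is undefined, so the very hypothesis that $\varphi$ (hence $\cos\varphi$) is defined forces $h(v,\bar W)\neq 0$. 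Here I would invoke that $W\neq 0$ and $v\neq 0$ (the latter from $\|v\|_h>0$, established just before \eqref{3.1}); strictly, nondegeneracy of $h$ alone does not prevent $h(v,\bar W)=0$ for $v,W\neq0$, which is exactly why the non-orthogonality of $v$ and $W$ is the operative assumption — by Corollary 3.5, $h(v,\bar W)=0$ would mean $\cos\varphi=0\neq-1$. So under $\cos\varphi=-1$ we have $h(v,\bar W)\neq 0$ and $h(v,\bar W)<0$.

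For the converse, assume $h(v,\bar W)<0$, i.e. $h(v,\bar W)$ is a strictly negative real number. Then $\mathrm{Im}\,h(v,\bar W)=0$ and $\mathrm{Re}\,h(v,\bar W)=h(v,\bar W)<0$, while $|h(v,\bar W)| = -h(v,\bar W)>0$. Dividing, $\cos\varphi = \mathrm{Re}\,h(v,\bar W)/|h(v,\bar W)| = h(v,\bar W)/(-h(v,\bar W)) = -1$, as required. This direction is immediate and carries no obstruction.

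The only subtle point — and the one I would flag as the main thing to get right rather than a genuine obstacle — is the well-definedness of $\varphi$ and the passage from $\cos\varphi=-1$ to the \emph{strict} sign $h(v,\bar W)<0$; both hinge on $h(v,\bar W)\neq 0$, which I would justify by noting that $\cos\varphi$ being meaningful already presupposes $h(v,\bar W)\neq0$ (consistently with $W\neq0$ and $\|v\|_h>0$, and with the orthogonality dichotomy of Corollary 3.5). Everything else is the elementary fact that $z\in\mathbb{C}\setminus\{0\}$ is negative real iff $\arg z=\pi$, applied to $z=h(v,\bar W)$.
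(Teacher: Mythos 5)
Your argument is correct and is essentially the paper's own proof: both write $h(v,\bar W)=|h(v,\bar W)|(\cos\varphi+i\sin\varphi)$ with $|h(v,\bar W)|>0$, deduce $\sin\varphi=0$ and $h(v,\bar W)=-|h(v,\bar W)|<0$ in one direction, and read off $\varphi=\pi$ in the other. Your extra care about why $h(v,\bar W)\neq 0$ (so that $\varphi$ is defined) is a reasonable elaboration of what the paper simply assumes by writing $\rho:=|h(v,\bar W)|>0$.
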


\begin{proof}
We have $h(v,\bar{W})=\rho ($ $\cos \varphi +i\sin \varphi ),$ where $\rho
:=\left\vert h(v,\bar{W})\right\vert >0.$ If $\cos \varphi =-1$ then $\sin
\varphi =0$ and $h(v,\bar{W})=-\rho $ which is negatively real valued.
Conversely, if $h(v,\bar{W})$ is negatively real valued then $\varphi =\arg
h(\eta ,\bar{W})=\pi$ and so, $\cos \varphi =-1.$
\end{proof}

\begin{remark}
Collinearity of the tangent vectors $v$ and $W$ implies $\cos \varphi =-1$
but the converse is not true (see example in \cite{nicoleta} with the
setting $v=u+W$).
\end{remark}

\subsection{Modified Hermitian metric and $(1,0)$-form}

The resulting complex Randers metric $F$ can also be presented in the form $%
F=\alpha +|\beta |$ as the sum of two components. Explicitly,

\begin{itemize}
\item the first term is the norm of $\eta $ with respect to a new Hermitian
metric $a$
\begin{equation}
\alpha (z,\eta )=\sqrt{a_{i\bar{j}}(z)\eta ^{i}\bar{\eta}^{j}},\quad \text{
where }\quad a_{i\bar{j}}=\frac{h_{i\bar{j}}}{\tilde{\varepsilon}}+\frac{%
W_{i}}{\tilde{\varepsilon}}\frac{\bar{W}_{j}}{\tilde{\varepsilon}}\cos
^{2}\varphi ,  \label{alpha_complex}
\end{equation}

\item the second term is the value on $\eta $ of a differential $(1,0)$-form
$b$
\begin{equation}
{|\beta (z,\eta )|=\sqrt{\beta (z,\eta ){\overline{\beta (z,\eta )}}},}\quad
\beta (z,\eta )=b_{i}(z)\eta ^{i},\quad \text{where}\quad b_{i}={-}\frac{%
W_{i}}{\tilde{\varepsilon}}\cos \varphi ,  \label{beta_complex}
\end{equation}%
where $W_{i}=h_{i\bar{j}}\bar{W}^{j}$ and $\tilde{\varepsilon}%
=||u(z)||_{h}^{2}-W^{i}W_{i}=||u(z)||_{h}^{2}-h(W,\bar{W}%
)=||u||_{h}^{2}-||W||_{h}^{2}.$
\end{itemize}

\noindent The inverse of $a_{i\bar{j}}(z)$ from \eqref{alpha_complex} is $a^{\bar{j}i}=%
\tilde{\varepsilon}\left( h^{\bar{j}i}-\frac{\cos ^{2}\varphi }{%
||u||_{h}^{2}-||W||_{h}^{2}\sin ^{2}\varphi }W^{i}\bar{W}^{j}\right) $ and
so,

$b^{i}:=a^{\bar{j}i}b_{\bar{j}}=\tilde{\varepsilon}\left(h^{\bar{j}i}-\frac{\cos
^{2}\varphi }{||u||_{h}^{2}-||W||_{h}^{2}\sin ^{2}\varphi }W^{i}\bar{W}%
^{j}\right)\left(-\frac{W_{\bar{j}}\cos \varphi }{\tilde{\varepsilon}}\right)=\frac{-\tilde{%
\varepsilon}\cos \varphi }{||u||_{h}^{2}-||W||_{h}^{2}\sin ^{2}\varphi }%
W^{i},$

$||b||^{2}:=b_{i}b^{i}=\frac{||W||_{h}^{2}\cos ^{2}\varphi }{%
||u||_{h}^{2}-||W||_{h}^{2}\sin ^{2}\varphi }<1,$ because $%
0<||W||_{h}<||u||_{h}\leq 1$.  This guarantees the strongly pseudoconvexity
of the function $F(z,\eta )$ from \eqref{ran_mira} on $F$-indicatrix. It
means that the fundamental tensor $g_{i\bar{j}}=\dot{\partial}_{i}\dot{%
\partial}_{\bar{j}}F^{2}$ is positive definite.
Taking into account the above notations, we can summarize the obtained
solutions of the generalized Zermelo navigation problem (briefly,
generalized ZNP) in the following way

\begin{proposition}
Let $(M,h)$ be a Hermitian manifold. Generalized navigation data\textit{\ }$%
(h,||u||_{h},W)${$,$} with $||W||_{h}\in \lbrack 0,1),$ $||u||_{h}\in
(||W||_{h},1]$ induce on the holomorphic tangent bundle $T^{\prime }M$
the following \

i) If $||W||_{h}\in (0,1)$ and $\varphi $ is a constant angle such that $%
\cos \varphi <0,$ then the solution of generalized ZNP is the complex
Randers metric $F(z,\eta )=\alpha +|\beta |$.

ii) If $||W||_{h}\in (0,1)$ and $\cos \varphi =0,$ then the solution of
generalized ZNP is the Hermitian metric (\ref{3.5}) which is conformal to $h$
and $a_{i\bar{j}}=\frac{1}{\tilde{\varepsilon}}h_{i\bar{j}}$. \

iii) If $W=0,$ then the solution of generalized ZNP is the Hermitian metric $%
F(z,\eta )=\frac{1}{||u(z)||_{h}}\sqrt{h_{i\bar{j}}\eta ^{i}\bar{\eta}^{j}},$
which is conformal to $h$ and $a_{i\bar{j}}=\frac{1}{||u||_{h}^{2}}h_{i\bar{j%
}}$.
\end{proposition}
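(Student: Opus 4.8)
The plan is to read this proposition as a consolidation of the derivation already carried out in the two preceding subsections, so the proof is an assembly in which, case by case, I verify that the function $F$ produced by the generalized navigation data genuinely satisfies conditions i)--iv) of the definition of a complex Finsler space and is of the asserted type. The starting point is the defining property of the time-minimal solution: on the indicatrix one has $F(z,v)=1$ for the resultant $v=u+W$, equivalently $u=v-W$, so $f(z)=\|u\|_h=\|v-W\|_h$. First I would square and expand this identity, writing $\mathrm{Re}\,h(v,\bar W)=\|v\|_h\|W\|_h\cos\theta$, to reach the quadratic $\|v\|_h^2-2\|v\|_h\|W\|_h\cos\theta-\tilde\varepsilon=0$ with $\tilde\varepsilon=\|u\|_h^2-\|W\|_h^2$. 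Here the mild-wind hypothesis $\|W\|_h<\|u\|_h$ is exactly what makes $\tilde\varepsilon>0$ and singles out the unique positive root, giving \eqref{3.1}; rationalizing the corresponding expression for $F(z,v)$ then yields \eqref{3.2} and, on setting $\varphi=\arg h(v,\bar W)$, the form \eqref{3.3}.

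Next I would pass from the indicatrix to an arbitrary $\eta=\lambda v\in T'M\setminus 0$, $\lambda\in\mathbb C$, by imposing $F(z,\lambda v)=|\lambda|F(z,v)$. The only delicate point — already flagged in the remark immediately following \eqref{ran_mira} — is that this complex homogeneity is legitimate precisely when $\varphi$ is a constant angle; under that hypothesis the substitution produces \eqref{3.4} and, for $W\neq 0$, its equivalent form \eqref{ran_mira}. After this, the remaining work is the case analysis on $\cos\varphi$.

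For (i), with $\|W\|_h\in(0,1)$ and $\cos\varphi<0$, I would extract the decomposition $F=\alpha+|\beta|$ from \eqref{ran_mira}: squaring the radical term and comparing identifies $\alpha^2=a_{i\bar j}\eta^i\bar\eta^j$ with $a_{i\bar j}$ as in \eqref{alpha_complex}, while $\beta(z,\eta)=b_i\eta^i$ with $b_i$ as in \eqref{beta_complex} gives $|\beta|=|h(\eta,\bar W)|\,|\cos\varphi|/\tilde\varepsilon$, and since $\cos\varphi<0$ the term subtracted in \eqref{ran_mira} is exactly $+|\beta|$. To conclude that $F$ is a \emph{complex Randers} metric I would then check that $(a_{i\bar j})$ is positive-definite Hermitian — it is $\tilde\varepsilon^{-1}h_{i\bar j}$ plus a Hermitian rank-one positive-semidefinite correction — and that the norm of $b$ obeys the strong-pseudoconvexity bound; the computation of the preceding subsection gives $\|b\|^2=\|W\|_h^2\cos^2\varphi/(\|u\|_h^2-\|W\|_h^2\sin^2\varphi)$, and $0<\|W\|_h<\|u\|_h\le 1$ forces $\|b\|^2<1$, which by the criterion for complex Randers metrics recalled in the preliminaries makes $g_{i\bar j}=\dot{\partial}_i\dot{\partial}_{\bar j}F^2$ positive definite. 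Together with homogeneity, smoothness on $T'M\setminus 0$, and positivity ($F(z,\eta)>0$ for $\eta\neq 0$ and $F(z,0)=0$, using $\tilde\varepsilon>0$) this verifies i)--iv). For (ii), putting $\cos\varphi=0$ in \eqref{ran_mira} annihilates both $b$ and the rank-one term in \eqref{alpha_complex}, leaving $F(z,\eta)=\tilde\varepsilon^{-1/2}\|\eta\|_h$, i.e. \eqref{3.5}, with $a_{i\bar j}=\tilde\varepsilon^{-1}h_{i\bar j}$ and conformal factor $\tilde\varepsilon^{-1/2}$ depending only on $z$; the earlier corollary identifying $\cos\varphi=0$ with orthogonality of $v$ and $W$ records when this situation arises. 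For (iii), $W=0$ gives $\tilde\varepsilon=\|u\|_h^2$ and kills every $h(\eta,\bar W)$ term in \eqref{3.4}, so $F(z,\eta)=\|\eta\|_h/\|u(z)\|_h$ with $a_{i\bar j}=\|u\|_h^{-2}h_{i\bar j}$, again conformal to $h$.

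The step I expect to be the main obstacle is not a single computation but the simultaneous justification, in case (i), that the constructed object really is a complex Finsler metric: one must secure the complex homogeneity — which fails unless $\varphi$ is constant, hence the standing hypothesis — together with the positive-definiteness of $g_{i\bar j}$, and the latter is where the inequality $\|b\|^2<1$ and the structure theory of complex Randers metrics have to be invoked rather than checked by hand. A secondary point requiring care is the selection of the correct root of the quadratic for $\|v\|_h^2$: it is the mild-wind assumption $\|W\|_h<\|u\|_h$ that makes the positive root the meaningful one and that keeps $\tilde\varepsilon$ — appearing in every denominator above — strictly positive.
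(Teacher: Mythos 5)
Your proposal is correct and follows essentially the same route as the paper: the proposition is stated there as a summary of the derivation in Sections 3.1--3.2 (the quadratic for $\|v\|_h^2$ with the positive root selected via $\tilde\varepsilon>0$, extension by homogeneity under the constant-$\varphi$ hypothesis, the identification of $a_{i\bar j}$ and $b_i$, and the bound $\|b\|^2<1$ for strong pseudoconvexity), and your case analysis reproduces exactly those steps.
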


\noindent For $||u||_{h}=1$ the formulae \eqref{alpha_complex} and %
\eqref{beta_complex} lead to the Hermitian metric $a$ and $(1,0)$-form $b,$
with $\varepsilon =1-||W||_{h}^{2}\geq \tilde{\varepsilon}$ according to
\cite{nicoleta}, respectively, in the case of the original Zermelo
navigation problem on Hermitian manifolds $(M,h)$, under a mild perturbation
with $||W||_{h}<1$. In this case, if $W$ vanishes then, due to iii), $%
F(z,\eta )=\sqrt{h_{i\bar{j}}\eta ^{i}\bar{\eta}^{j}}$ and so the
time-minimal solutions are represented by the geodesics of $h$, ($h$%
-geodesics) which coincide with $F$-geodesics as point sets and lengths ($h$%
-length is the same as $F$-length, formally we can write $F=h$).

Also, Proposition 3.6 gives some extensions and refinements of the previous
results in the standard setting (cf. \cite{nicoleta}). In other words we can
say that in the new modified scenario with the presence of $W$ and with $%
||u(z)||_{h}\in (||W||_{h},1)$ the time-efficient paths are the geodesics
of the solution $F$ of ZNP obtained in i) or ii). {{As we shall see in the
next section, with some additional assumptions }$h$-geodesics and $F$%
-geodesics can be the same as point sets, i.e. projectively related, however
their corresponding $h$-length and $F$-length are different.} In interpreting we can say that a ship traces the same route in the presence and absence of a wind, but her resulting speed differs. Without a perturbation ($W=0$) the geodesics of the background
Hermitian metric $h$ are not necessarily the solutions to the problem as it
is in the standard case with $||u||_{h}=1$. {The difference is made by the
influence of the new factor $||u(z)||_{h}$ which now becomes a spatial
function of $z$ and the resulting Hermitian metric is conformal to the
background metric $h$, but $F\neq h$ for }$||u(z)||_{h}${$\neq 1$. Since $%
g_{i\bar{j}}=\frac{1}{||u(z)||_{h}^{2}}h_{i\bar{j}}$, $\frac{1}{%
||u(z)||_{h}^{2}}$ plays the role of a conformal factor. }

{Next, we focus on the implications ii) and iii) from the above Proposition.
By generalized Zermelo navigation both of them produce Hermitian metrics
which are conformal to the background metric $h,$ however with different
conformal factors. So, it is natural to ask whether we can obtain any
Hermitian metric $F$ $\ $which is conformal to $h$ as a solution to
generalized ZNP. The answer comes below }

\begin{corollary}
Let $F$ be Hermitian metric which is conformal to background Hermitian
metric $h,$ i.e. $F(z,\eta )=\rho (z)||\eta ||_{h},$ with a smooth, positive
and real valued function $\rho (z)$. Then $F$ is a solution of generalized
ZNP if and only if either $\rho (z)=\frac{1}{\sqrt{\tilde{\varepsilon}}}$
and $\cos \varphi =0$ or $\rho (z)=\frac{1}{||u||_{h}}$ and $W=0$.
\end{corollary}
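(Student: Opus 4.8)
The plan is to reduce the statement to the trichotomy recorded in Proposition 3.6, the only delicate point being to show that case i) of that proposition never yields a purely Hermitian metric.

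Sufficiency is immediate. If $\rho(z)=1/\sqrt{\tilde\varepsilon}$ for an admissible value $\tilde\varepsilon=||u||_h^2-||W||_h^2$ (with $||W||_h\in(0,1)$, $||u||_h\in(||W||_h,1]$) and $\cos\varphi=0$, then by Proposition 3.6 ii) --- equivalently by formula \eqref{3.5} --- the generalized navigation data $(h,||u||_h,W)$ with $v$ and $W$ orthogonal produce exactly $F(z,\eta)=\tfrac{1}{\sqrt{\tilde\varepsilon}}||\eta||_h=\rho(z)||\eta||_h$, so $F$ solves the generalized ZNP. Similarly, if $\rho(z)=1/||u||_h$ and $W=0$, then Proposition 3.6 iii) gives $F(z,\eta)=\tfrac{1}{||u||_h}||\eta||_h=\rho(z)||\eta||_h$ as the solution attached to $(h,||u||_h,0)$. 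This disposes of the ``if'' direction.

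For necessity, suppose $F(z,\eta)=\rho(z)||\eta||_h$ is the solution of the generalized ZNP for some admissible generalized navigation data $(h,||u||_h,W)$. By Proposition 3.6 the solution falls into one of the types i)--iii). If $W=0$ we are in case iii), $F(z,\eta)=\tfrac{1}{||u||_h}||\eta||_h$, and comparison with $\rho(z)||\eta||_h$ forces $\rho=1/||u||_h$, the second alternative. If $W\neq 0$, then $\varphi$ is the (constant) argument of $h(v,\bar W)$; when $\cos\varphi=0$, case ii) together with \eqref{3.5} gives $F(z,\eta)=\tfrac{1}{\sqrt{\tilde\varepsilon}}||\eta||_h$, hence $\rho=1/\sqrt{\tilde\varepsilon}$, the first alternative. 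So it suffices to rule out $\cos\varphi\neq 0$, i.e. case i) together with the $\cos\varphi>0$ variant $F=\alpha-|\beta|$ mentioned after \eqref{ran_mira}.

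The main obstacle is exactly this last exclusion: a complex Randers (or complex $(\alpha,\beta)$-) metric with $\beta\not\equiv 0$ is never purely Hermitian. When $W\neq 0$ and $\cos\varphi\neq 0$ we have $b_i=-\tfrac{W_i}{\tilde\varepsilon}\cos\varphi$ with $W_i=h_{i\bar j}\bar W^j$; since $h$ is positive definite and $W\neq 0$, $W_i\not\equiv 0$, so $\beta(z,\eta)=b_i(z)\eta^i\not\equiv 0$. Then $F=\alpha\pm|\beta|$ has a genuinely nonzero $|\beta|$-term, $F^2=\alpha^2+|\beta|^2\pm 2\alpha|\beta|$, and the fundamental tensor $g_{i\bar j}=\dot\partial_i\dot\partial_{\bar j}F^2$ genuinely depends on $\eta$ (equivalently, $2\alpha|\beta|$ fails to be smooth along the complex hyperplane $\{\beta=0\}$, on which $\rho^2||\eta||_h^2$ is smooth); but for $F=\rho(z)||\eta||_h$ one has $g_{i\bar j}=\rho^2 h_{i\bar j}(z)$, independent of $\eta$. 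This contradiction kills case i) and completes the argument. One caveat worth flagging: this step needs $n\geq 2$ --- for $n=1$ every complex Randers metric already has the form $c(z)|\eta^1|$ and the distinction collapses --- so the statement should be read with $n\geq 2$ in mind; the remaining book-keeping (reading off the conformal factor in ii) and iii), and checking the two alternatives are exhaustive) is routine once Proposition 3.6 is available.
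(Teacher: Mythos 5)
Your proof is correct, but the necessity direction follows a genuinely different route from the paper's. The paper argues directly from the explicit formula (\ref{3.4}): it equates the solution with $\rho(z)||\eta||_{h}$, observes that the resulting identity splits into an ``irrational'' part (the square root) and a ``rational'' part, and concludes that the square root can only collapse to a Hermitian norm if $\mathrm{Re}\,h(\eta,\bar{W})$ (i.e.\ $|h(\eta ,\bar{W})|\cos\varphi$) vanishes identically, which forces either $W=0$ or $\cos\varphi=0$; the conformal factor is then read off from the remaining rational identity. You instead route everything through the trichotomy of Proposition 3.6 and reduce the problem to a structural exclusion: a complex Randers (or $(\alpha,\beta)$-) metric with $\beta\not\equiv 0$ cannot be purely Hermitian, which you justify by the failure of $C^{1}$-smoothness of $2\alpha|\beta|$ along the hyperplane $\{\beta=0\}\setminus\{0\}$ while $\rho^{2}||\eta||_{h}^{2}$ is smooth there. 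Both arguments are sound, and in fact they share the same blind spot in dimension one: for $n=1$ the set $\{\beta=0\}\setminus\{0\}$ is empty, every complex Randers metric degenerates to $c(z)|\eta^{1}|$, and likewise the paper's square root $\sqrt{|h(\eta,\bar{W})|^{2}\cos^{2}\varphi+||\eta||_{h}^{2}\tilde{\varepsilon}}$ is itself a Hermitian norm, so the ``irrational versus rational'' separation has no force either --- the corollary implicitly assumes $n\geq 2$, which you correctly flag and the paper does not. What your approach buys is a reusable, coordinate-free lemma (no purely Hermitian Randers metric with nonzero $\beta$ in dimension at least two) at the cost of leaning on Proposition 3.6 being an exhaustive case list; what the paper's approach buys is a short self-contained computation that also identifies the conformal factor in the same stroke.
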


\begin{proof} If $F(z,\eta )=\rho (z)||\eta ||_{h}$ stands for a solution of
generalized ZNP, then according to formula (\ref{3.4}) we have
\begin{equation*}
\frac{\sqrt{\lbrack \text{Re}h(\eta ,\bar{W})]^{2}+||\eta
||_{h}^{2}(||u||_{h}^{2}-||W||_{h}^{2})}}{||u||_{h}^{2}-||W||_{h}^{2}}{-}%
\rho (z)||\eta ||_{h}-\frac{[\text{Re}h(\eta ,\bar{W})]}{%
||u||_{h}^{2}-||W||_{h}^{2}}=0
\end{equation*}%
which contains an irrational part and a rational one. We can deduce that Re$%
h(\eta ,\bar{W})=0$ and

\noindent$\left[\frac{1}{\sqrt{||u||_{h}^{2}-||W||_{h}^{2}}}{-}%
\rho (z)\right]||\eta ||_{h}=0.$ The first relation implies that either $\cos
\varphi =0$ with $W\neq 0$ or $W=0.$ These together with the second relation
lead to either $\rho (z)=\frac{1}{\sqrt{\tilde{\varepsilon}}}$ or $\rho (z)=%
\frac{1}{||u||_{h}}.$ The converse results come by Proposition 3.6.
\end{proof}

The perturbation of a Hermitian metric $h$ by a vector field $W$ with
the introduction of a spatial function $||u(z)||_{h}$, where $%
0<||W||_{h}<||u||_{h}\leq 1,$ i.e. generalized navigation data\textit{\ }$%
(h,||u||_{h},W),$ and the additional assumption $\cos \varphi =-1$
generate strongly pseudoconvex Randers metrics. An inverse problem asks if
every complex Randers metric $F(z,\eta )=\alpha +\left\vert \beta
\right\vert $ can be realized through the perturbation of some Hermitian
metric $h$ by some vector field $W$ satisfying $0<h(W,\bar{W})<h(u,\bar{u})$
and taking into account $f(z):=||u(z)||_{h}$. It can be checked by
constructing $h$ and rescaled wind $\tilde{W}$ that perturbing the above $h$
by the stipulated $\tilde{W}$ gives back the complex Randers metric we
started with. For clarity we give the proof, although it is likewise the
standard formulation of the complex ZNP, however we include some
refinements. Considering the complex metric $F(z,\eta )=\alpha +\left\vert
\beta \right\vert ,$ with $\alpha =\sqrt{a_{i\bar{j}}(z)\eta ^{i}\bar{\eta}%
^{j}},$ $\beta =b_{i}(z)\eta ^{i},$ $b^{i}:=a^{\bar{j}i}b_{\bar{j}}$ , $%
||b||^{2}:=b^{i}b_{i}<1$ and $\tilde{\omega}:=f^{2}(z)(1-||b||^{2}),$ we
construct $h$ and $W$ in the following manner%
\begin{equation}
h_{i\bar{j}}(z)=\tilde{\omega}(a_{i\bar{j}}-b_{i}b_{\bar{j}});\;{%
||u(z)||=f(z)};\;W^{i}(z)=\frac{f^{2}(z)b^{i}}{\tilde{\omega}}{.}  \label{X}
\end{equation}

\noindent By straightforward computations we obtain $||W||_{h}^{2}:=h_{i\bar{%
j}}W^{i}\bar{W}^{j}=\tilde{\omega}(a_{i\bar{j}}-b_{i}b_{\bar{j}})\frac{%
f^{2}(z)b^{i}}{\tilde{\omega}}\frac{f^{2}(z)\bar{b}^{j}}{\tilde{\omega}}%
=f^{2}(z)||b||^{2}.$ Therefore, from $||W||_{h}=f(z)||b||$ with $||b||<1$ we obtain $||W||_h<f(z)$, since $||b||=\frac{||W||_h}{f(z)}<1$. Hence, $\tilde{\varepsilon}%
=||u||_{h}^{2}-||W||_{h}^{2}=f^{2}(z)-f^{2}(z)||b||^{2}=\tilde{\omega}$ and $%
W_{i}:=h_{i\bar{j}}\bar{W}^{j}=\tilde{\omega}(a_{i\bar{j}}-b_{i}b_{\bar{j}})%
\frac{f^{2}(z)\bar{b}^{j}}{\tilde{\omega}}=f^{2}(z)b_{i}(1-||b||^{2})=\tilde{%
\omega}b_{i}.$ Thus, $h(\eta ,\bar{W})=h_{i\bar{j}}\eta ^{i}\bar{W}%
^{j}=W_{i}\eta ^{i}=\tilde{\omega}\beta .$ Due to {\eqref{ran_mira}}, the
generalized navigation data\textit{\ }$(h,||u||_{h},W)$ from (\ref{X}) lead
to the function
\begin{equation*}
\tilde{F}=\tilde{\alpha}+|\tilde{\beta}|=\sqrt{\alpha ^{2}-\left\vert \beta
\right\vert ^{2}\sin ^{2}\varphi }\mathbf{-}\left\vert \beta \right\vert
\cos \varphi
\end{equation*}%
with $\tilde{\alpha}(z,\eta )=\sqrt{\tilde{a}_{i\bar{j}}(z)\eta ^{i}\bar{\eta%
}^{j}},$ $\;\tilde{\beta}(z,\eta )=\tilde{b}_{i}(z)\eta ^{i},$ where $\tilde{%
b}_{i}=b_{i}\cos \varphi $ and $\tilde{a}_{i\bar{j}}(z)=a_{i\bar{j}}-(1-\cos
^{2}\varphi )b_{i}b_{\bar{j}}.$ The obtained function $\tilde{F}$ is complex
homogeneous with respect to $\eta $ if and only if $\varphi $ is a constant
angle. Thus, we consider the following cases:

\noindent\textbf{Case I.} $\tilde{F}=F$ if and only if $\cos \varphi =-1,$
i.e. by (\ref{X}) we obtain the same complex Randers metric.

\noindent \textbf{Case} \textbf{II.} $\tilde{F}=\frac{1}{\sqrt{\tilde{%
\varepsilon}}}h,$ where $h:=\sqrt{h_{{i}\bar{j}}\eta ^{{i}}\bar{\eta}^{j}},$ if
and only if $\cos \varphi =0.$ So, by (\ref{X}) we obtain the Hermitian
metric $\tilde{a}_{i\bar{j}}(z)=a_{i\bar{j}}-b_{i}b_{\bar{j}}=\frac{1}{%
\tilde{\varepsilon}}h_{{i}\bar{j}}$ if and only if $\cos \varphi =0.$

\noindent Notice that if $\varphi $ is a constant angle such that {$\cos \varphi \neq
0,-1,$ then }$\tilde{F}<F$ and, {under additional assumption the function }$%
\tilde{F}$ can be $(\alpha ,\beta )$-complex Finsler function. Summarizing,
we obtain\

\begin{theorem}
i) A complex Finsler metric $F$ is of complex Randers type, i.e., it has the
form $F(z,\eta )=\alpha +|\beta |$, if and only if it solves the generalized
Zermelo navigation problem on some Hermitian manifold $(M,h)$, with a
variable in space ship's speed $||u||_{h}$, under the influence of a wind $W$
which satisfy $0<||W||_{h}<||u||_{h}\leq 1$ and $\cos \varphi =-1$.

ii) A complex Finsler metric $F$ is Hermitian conformal to $h$ if and only
if it solves the generalized Zermelo navigation problem on some Hermitian
manifold $(M,h)$, with a variable in space ship's speed $||u||_{h}$, under
the influence of a wind $W$ which satisfy either $0<||W||_{h}<||u||_{h}\leq
1 $ and $\cos \varphi =0$ or $W=0$. Also, $F$ is Hermitian homothetic to $h$
if and only if either $\cos \varphi =0$ and $\tilde{\varepsilon}%
=||u||_{h}^{2}-||W||_{h}^{2}=const$. or $W=0$ and $||u||_{h}$ $=const.$
\end{theorem}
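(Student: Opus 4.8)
The plan is to treat the two directions of each biconditional separately, using the explicit constructions already carried out in Section 3. For the "only if" (forward) direction of i): starting from an arbitrary complex Randers metric $F = \alpha + |\beta|$ with $\|b\|^2 < 1$, I would invoke the construction \eqref{X}, i.e.\ set $h_{i\bar j} = \tilde\omega(a_{i\bar j} - b_i b_{\bar j})$, $\|u(z)\| = f(z)$ (any smooth positive $f$ with $f^2(1-\|b\|^2) = \tilde\omega$ works, or simply rescale), and $W^i = f^2 b^i/\tilde\omega$. The computations preceding the statement already verify that $\|W\|_h = f\|b\| < f = \|u\|_h$, that $\tilde\varepsilon = \tilde\omega$, and that $h(\eta,\bar W) = \tilde\omega\,\beta$, so the resulting Zermelo solution $\tilde F$ from \eqref{ran_mira} equals $\sqrt{\alpha^2 - |\beta|^2\sin^2\varphi} - |\beta|\cos\varphi$; imposing $\cos\varphi = -1$ (Case I) gives $\tilde F = \alpha + |\beta| = F$ exactly. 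For the "if" direction, one simply quotes Proposition 3.6(i): generalized navigation data with $\|W\|_h \in (0,1)$ and constant $\varphi$ with $\cos\varphi < 0$ — in particular $\cos\varphi = -1$ — produce a metric of the form $\alpha + |\beta|$, which is by definition complex Randers.

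For ii), the "if" direction is again just Proposition 3.6(ii)--(iii): $\cos\varphi = 0$ yields the conformal metric \eqref{3.5} with factor $1/\sqrt{\tilde\varepsilon}$, and $W = 0$ yields the conformal metric with factor $1/\|u\|_h$; both are Hermitian conformal to $h$. The "only if" direction is exactly Corollary 3.9, which I would cite directly: if $F(z,\eta) = \rho(z)\|\eta\|_h$ solves the generalized ZNP, then separating the irrational and rational parts of \eqref{3.4} forces $\operatorname{Re}h(\eta,\bar W) = 0$ for all $\eta$ (hence $\cos\varphi = 0$ with $W\neq 0$, or $W = 0$) together with $\rho = 1/\sqrt{\tilde\varepsilon}$ or $\rho = 1/\|u\|_h$ respectively — and in the $\cos\varphi = 0$ case this is precisely "$F$ conformal to $h$ via the stated navigation data." For the homothetic refinement: $\tilde F$ (equivalently $F$) is homothetic to $h$ iff its conformal factor is constant; in the $\cos\varphi = 0$ branch the factor is $1/\sqrt{\tilde\varepsilon}$, constant iff $\tilde\varepsilon = \|u\|_h^2 - \|W\|_h^2 = \mathrm{const}$, and in the $W = 0$ branch it is $1/\|u\|_h$, constant iff $\|u\|_h = \mathrm{const}$. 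This reduces the homothety claim to Case II of the pre-statement discussion plus the definition of "homothetic."

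I would organize the write-up so that the nontrivial content — the inverse construction \eqref{X} and its verification — is not repeated, since it is done immediately before the theorem; the proof then consists mostly of bookkeeping: matching each clause of the theorem to Proposition 3.6, Corollary 3.9, and Cases I--II above, and checking that the constraints $0 < \|W\|_h < \|u\|_h \leq 1$ are consistent (this follows from $\|W\|_h = f\|b\|$ with $0 < \|b\| < 1$ and $f = \|u\|_h \leq 1$, which is already noted). The one place that deserves a careful sentence rather than a citation is the "only if" direction of i): one must observe that a metric of complex Randers form $\alpha + |\beta|$ with $\|b\|^2 \in (0,1)$ genuinely has $\beta \not\equiv 0$, so that the recovered $W$ is nonzero and $\varphi$ is well-defined, ensuring we land in Proposition 3.6(i) rather than the degenerate conformal cases.

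**The main obstacle** I anticipate is not in any single computation — all of them are essentially present in the excerpt — but in the logical packaging: the theorem bundles four separate biconditionals (Randers$\leftrightarrow$ZNP with $\cos\varphi=-1$; conformal$\leftrightarrow$ZNP with $\cos\varphi=0$ or $W=0$; homothetic$\leftrightarrow$constant $\tilde\varepsilon$ or constant $\|u\|_h$), and one must be scrupulous that the recovered navigation data in each forward direction actually satisfies the hypotheses invoked in the corresponding backward direction (constancy of $\varphi$, the strict inequality $\|W\|_h < \|u\|_h$, and $\|u\|_h \leq 1$, the last of which requires noting that $f$ may be chosen or rescaled to satisfy it without changing $F$ up to the allowed data). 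Getting that consistency stated cleanly, rather than any analytic difficulty, is where care is needed.
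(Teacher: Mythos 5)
Your proposal is correct and follows essentially the same route as the paper: the paper's ``proof'' of this theorem is precisely the material immediately preceding it --- the inverse construction \eqref{X} with the verifications $||W||_{h}=f||b||<f=||u||_{h}$, $\tilde{\varepsilon}=\tilde{\omega}$, $h(\eta,\bar{W})=\tilde{\omega}\beta$, the resulting expression $\tilde{F}=\sqrt{\alpha^{2}-|\beta|^{2}\sin^{2}\varphi}-|\beta|\cos\varphi$ with Cases I and II, combined with Proposition 3.6 and Corollary 3.9 for the direct implications and the conformal/homothetic clauses. Your additional care points (nonvanishing of the recovered $W$ in the Randers case, consistency of the constraints on $f$) are sensible but do not change the argument.
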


\noindent We can ask whether decreasing a ship's own speed $||u||_{h}$ under fixed
wind field $W$ will cause the same effect on the time-minimal path as
increasing the wind "force" with $||u||_{h}=1$ and maintaining the same
relation $\frac{||W||_{h}}{||u||_{h}}$. Since $0<||u||_{h}<1$ the decrease
of a ship's velocity introduces a larger effective wind. We have a
straightforward correspondence with the standard setting, (i.e. with $%
||u||_{h}=1).$ Indeed, {having already solved generalized ZNP and }replacing
$W^{i}$ with the normalized wind $\tilde{W}^{i}=\frac{1}{||u(z)||_{h}}W^{i}$%
, where $W\neq 0$ and $u^{i}$ with $\tilde{u}^{i}=\frac{1}{||u(z)||_{h}}%
u^{i} $ in the generalized approach, we obtain $\tilde{\alpha}%
=||u(z)||_{h}\alpha $ and $|\tilde{\beta}|=||u(z)||_{h}|\beta |$, where the
formulae for $\tilde{\alpha}$ and $|\tilde{\beta}|$ are then the same as given in
\cite{nicoleta}.

\section{Geodesics corresponding to solutions of Zermelo's problem}

We aim to find some connections among geodesic curves corresponding to the metrics $h,$ $%
a$ and $F$ when a solution to ZNP is a complex Randers metric $F$, i.e. $%
\cos \varphi =-1,$ and then when a solution $F$ is conformal to $h,$
i.e. $W=0$ or $W\neq 0$ and $\cos \varphi =0$.

\subsection{Complex Randers metrics as the solutions to
Zermelo's problem}

The study of complex Randers metrics was developed in several papers (cf. \cite%
{Al-Mu3, Al-Mu2, Al-Mu3, Al-Mu4}), giving some applications of the general
theories regarding to generalized Berwald spaces (cf. \cite{Al-Mu3, Al-Mu4}),
complex Douglas spaces and projectively related complex Finsler metrics (cf.
\cite{Al-Mu2, Al-Mu3}). With these in mind, further on we come with some new
results from point of view of generalized ZNP.

\subsubsection{Connection between $a$ and $F$}

\bigskip Let $F=\alpha +|\beta |$ be a complex Randers metric, with $\alpha
(z,\eta )=\sqrt{a_{i\bar{j}}(z)\eta ^{i}\bar{\eta}^{j}}$ and $\beta (z,\eta
)=b_{i}(z)\eta ^{i}.$ First, we show some results which show connections
between the metrics $a$ and $F.$ For the metric $F$ the spray coefficients are
expressed as
\begin{equation}
G^{i}=\overset{a}{G^{i}}+\frac{1}{2\gamma }\left(l_{\bar{r}}\frac{\partial b^{%
\bar{r}}}{\partial z^{j}}-\frac{\beta ^{2}}{|\beta |^{2}}\frac{\partial b_{%
\bar{r}}}{\partial z^{j}}\bar{\eta}^{r}\right)\xi ^{i}\eta ^{j}+\frac{\beta }{%
4|\beta |}k^{\overline{r}i}\frac{\partial b_{\bar{r}}}{\partial z^{j}}\eta
^{j},  \label{3.0}
\end{equation}%
where $\overset{a}{G^{i}}=\frac{1}{2}\overset{a}{N_{j}^{i}}\eta ^{j}$ are
the spray coefficients of the Hermitian metric $a$ and $\gamma :=L+\alpha
^{2}(||b||^{2}-1)$, $\xi ^{i}:=\overline{\beta }\eta ^{i}+\alpha ^{2}b^{i}$,
$k^{\bar{r}i}:=2\alpha a^{\bar{j}i}+\frac{2(\alpha ||b||^{2}+2|\beta |)}{%
\gamma }\eta ^{i}\bar{\eta}^{r}-\frac{2\alpha ^{3}}{\gamma }b^{i}\bar{b}^{r}-%
\frac{2\alpha }{\gamma }(\beta \eta ^{i}b^{\bar{r}}+\beta b^{i}\bar{\eta}%
^{r})$, $b^{i}:=a^{\bar{j}i}b_{\bar{j}},$ $||b||^{2}:=a^{\bar{j}i}b_{i}b_{%
\bar{j}}.$

A necessary and sufficient condition that a connected complex Randers space
to be generalized Berwald is $A=0,$ where $A:=(\overline{\beta }l_{\bar{r}}%
\frac{\partial b^{\bar{r}}}{\partial z^{j}}+\beta \frac{\partial b_{\bar{r}}%
}{\partial z^{j}}\bar{\eta}^{r})\eta ^{j}=(\overset{a}{\delta _{k}}|\beta
|^{2})\eta ^{k},$ $\overset{a}{\delta _{k}}=\partial _{k}-\overset{a}{%
N_{k}^{i}}\dot{\partial}_{i}$ and $l_{\bar{r}}:=a_{l\bar{r}}\eta ^{l},$ (%
\cite{Al-Mu3}). Moreover, for such spaces the formula (\ref{3.0}) is
reduced to $G^{i}=\overset{a}{G^{i}}$. Based on this, by a straightforward
computation the holomorphic curvature in direction $\eta$ corresponding to
a generalized Berwald complex Randers metric $F$ can be written as%
\begin{equation}
\mathcal{K}_{F}(z,\eta )=\frac{\alpha ^{3}}{F^{3}}\mathcal{K}_{a}(z,\eta )-%
\frac{4\overline{\beta }}{F^{3}|\beta |}\frac{\partial \overset{a}{G^{l}}}{%
\partial \bar{z}^{m}}b_{l}\bar{\eta}^{m},  \label{3.0'}
\end{equation}%
where $\mathcal{K}_{a}(z,\eta )$ is the holomorphic curvature in direction $%
\eta $ which corresponds to $a.$ Also, under assumption of generalized Berwald,
another link between geometric objects $\overset{a}{\theta ^{\ast i}}$ and $%
\theta ^{\ast i},$ corresponding to $a$ and $F,$ respectively is stated by%
\begin{equation}
\theta ^{\ast i}=\overset{a}{\theta ^{\ast i}}+\frac{1}{\gamma }\Gamma _{l%
\bar{r}\bar{m}}\eta ^{l}\bar{\eta}^{r}b^{\bar{m}}\xi ^{i}-\frac{\alpha \beta
}{|\beta |}\left(\Gamma _{l\bar{r}\bar{m}}b^{l}\bar{\eta}^{r}+2\Omega _{\bar{m}}\right)\left(%
\tilde{h}^{\bar{m}i}-\frac{\overline{\beta }}{\gamma }b^{\bar{m}}\eta ^{i}\right),
\label{4.4}
\end{equation}%
where $\overset{a}{\theta ^{\ast i}}=-\Gamma _{l\bar{r}\bar{m}}a^{\bar{m}%
i}\eta ^{l}\bar{\eta}^{r},$ $\Gamma _{l\bar{r}\bar{m}}:=\frac{\partial a_{l%
\bar{m}}}{\partial \bar{z}^{r}}$ $-\frac{\partial a_{l\bar{r}}}{\partial
\bar{z}^{m}}$, $\Omega _{\bar{m}}:=\overset{a}{N_{\bar{m}}^{\bar{s}}}b_{\bar{%
s}}-\frac{\partial b_{\bar{r}}}{\partial \bar{z}^{m}}\bar{\eta}^{r}-\frac{%
\overline{\beta }^{2}}{|\beta |^{2}}\frac{\partial b_{l}}{\partial \bar{z}%
^{m}}\eta ^{l}$ and $\tilde{h}^{\bar{m}i}:=a^{\bar{m}i}-\frac{\alpha ^{2}}{%
\gamma }b^{\bar{m}}b^{i}.$

\begin{theorem}
\cite{Al-Mu3} Let $(M,F)$ be a connected complex Randers space. Then, the
following assertions are equivalent: i) $(M,F)$ is a complex Douglas space;
ii) $A=0$ and $\theta ^{\ast i}=\overset{a}{\theta ^{\ast i}}$; iii) $a$ and
$F$ are projectively related.
\end{theorem}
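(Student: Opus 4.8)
\noindent The plan is to run the cycle \emph{ii)} $\Rightarrow$ \emph{iii)} $\Rightarrow$ \emph{i)} $\Rightarrow$ \emph{ii)}, the implication \emph{i)} $\Rightarrow$ \emph{iii)} falling out along the way. For \emph{ii)} $\Rightarrow$ \emph{iii)}: if $A=0$ then, as recalled right after \eqref{3.0}, the spray coefficients \eqref{3.0} collapse to $G^{i}=\overset{a}{G^{i}}$; combined with $\theta^{\ast i}=\overset{a}{\theta^{\ast i}}$, the geodesic equations \eqref{XXX} for $F$ and for $a$ become literally identical, so $F$- and $a$-geodesics coincide even as parametrized curves, in particular as point sets, and $F,a$ are projectively related (with trivial projective change, $B^{i}=0$, $P=0$). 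For \emph{iii)} $\Rightarrow$ \emph{i)}: a purely Hermitian metric is complex Douglas (as noted in Section~2), so the Douglas-type projective curvature invariants of $a$ vanish; being genuine \emph{projective} invariants they are unchanged under the projective change relating $a$ to $F$, hence they vanish for $F$ too, and $(M,F)$ is a complex Douglas space.

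The substantial implication is \emph{i)} $\Rightarrow$ \emph{ii)}. I would introduce the modified spray coefficients $\mathcal{G}^{i}:=G^{i}-\tfrac12\theta^{\ast i}$ and $\overset{a}{\mathcal{G}}^{i}:=\overset{a}{G^{i}}-\tfrac12\overset{a}{\theta^{\ast i}}$, with which \eqref{XXX} reads $\ddot\gamma^{k}+2\mathcal{G}^{k}=0$. Using \eqref{3.0} for $G^{i}$ and \eqref{4.4} for $\theta^{\ast i}$, expand $\mathcal{G}^{i}-\overset{a}{\mathcal{G}}^{i}$ as a finite sum of terms, each of the form (a scalar in $z$) $\times$ (a rational, or an $\alpha$- or $|\beta|$-irrational, function of $(\eta,\bar\eta)$) $\times$ (one of the vectors $a^{\bar r i}(\cdots)_{\bar r}$, $b^{i}$, $\xi^{i}=\bar\beta\eta^{i}+\alpha^{2}b^{i}$, $\tilde h^{\bar m i}=a^{\bar m i}-\tfrac{\alpha^{2}}{\gamma}b^{\bar m}b^{i}$ or $\eta^{i}$), the scalar factors carrying the coefficient blocks $\tfrac{\partial b_{\bar r}}{\partial z^{j}}\eta^{j}$, $\tfrac{\partial b^{\bar r}}{\partial z^{j}}\eta^{j}$, $\Gamma_{l\bar r\bar m}$, $\Omega_{\bar m}$. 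Since $a$ is purely Hermitian, $\overset{a}{\mathcal{G}}^{i}$ is holomorphic quadratic in $\eta$ plus a term of type $(1,1)$ in $(\eta,\bar\eta)$ — exactly the shape a modified spray must have, modulo an $\eta^{i}$-summand, for the Douglas-type invariants to vanish. Hence the hypothesis that $F$ is complex Douglas forces $\mathcal{G}^{i}-\overset{a}{\mathcal{G}}^{i}=P\,\eta^{i}$ for some smooth $P$ on $T^{\prime}M\setminus 0$; this is precisely \emph{iii)}, so \emph{i)} $\Rightarrow$ \emph{iii)} comes for free.

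It remains to extract \emph{ii)} from the identity $\mathcal{G}^{i}-\overset{a}{\mathcal{G}}^{i}=P\eta^{i}$. The device is a separation of transcendents: since the identity holds for every $\eta$, any part of the left-hand side whose free index sits on $a^{\bar r i}$, $b^{i}$ or $\tilde h^{\bar m i}$ rather than on $\eta^{i}$ cannot be absorbed into $P\eta^{i}$, and after a further split by the powers of $\alpha$ and by the factors $\beta/|\beta|$, $\beta^{2}/|\beta|^{2}$, $\bar\beta/|\beta|$ each such block must vanish on its own. Following first the contribution $\tfrac{\beta}{4|\beta|}k^{\bar r i}\tfrac{\partial b_{\bar r}}{\partial z^{j}}\eta^{j}$ of \eqref{3.0}, whose $2\alpha a^{\bar r i}$-piece has no partner elsewhere, one obtains $\bigl(\overset{a}{\delta_{k}}|\beta|^{2}\bigr)\eta^{k}=A=0$, i.e. $F$ is generalized Berwald and $G^{i}=\overset{a}{G^{i}}$. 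Substituting $A=0$ back, the identity shrinks to $\tfrac12\bigl(\overset{a}{\theta^{\ast i}}-\theta^{\ast i}\bigr)=P\eta^{i}$; the only bare $a^{\bar m i}$ in \eqref{4.4} occurs through $-\tfrac{\alpha\beta}{|\beta|}\bigl(\Gamma_{l\bar r\bar m}b^{l}\bar\eta^{r}+2\Omega_{\bar m}\bigr)a^{\bar m i}$, so it must vanish, giving $\Gamma_{l\bar r\bar m}b^{l}\bar\eta^{r}+2\Omega_{\bar m}=0$ and killing the whole second term of \eqref{4.4}; the surviving term $\tfrac1\gamma\Gamma_{l\bar r\bar m}\eta^{l}\bar\eta^{r}b^{\bar m}\xi^{i}$ then has a $b^{i}$-part $\tfrac{\alpha^{2}}{\gamma}\Gamma_{l\bar r\bar m}\eta^{l}\bar\eta^{r}b^{\bar m}b^{i}$ which, being independent of $\eta^{i}$, forces $\Gamma_{l\bar r\bar m}\eta^{l}\bar\eta^{r}b^{\bar m}=0$, so that term vanishes as well. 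Therefore $\theta^{\ast i}=\overset{a}{\theta^{\ast i}}$, and together with $A=0$ this is \emph{ii)}, closing the cycle.

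The main obstacle is precisely this last separation step: one must organize $\mathcal{G}^{i}-\overset{a}{\mathcal{G}}^{i}$ carefully enough that the splitting by $\eta$-homogeneity, by the parity in $\alpha$, and by the $\beta/|\beta|$-factors is genuinely legitimate — in particular one has to verify that the coefficient vectors $a^{\bar m i}(\cdots)_{\bar m}$, $b^{i}$ and $\eta^{i}$, with the specific transcendental scalars attached, cannot conspire to cancel for all $\eta\in T^{\prime}_{z}M$, so that the vanishing of each coefficient block really does follow.
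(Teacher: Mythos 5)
The paper itself offers no proof of this statement --- it is quoted verbatim from \cite{Al-Mu3} --- so your attempt has to stand entirely on its own. Two of your three legs do: ii) $\Rightarrow$ iii) is correct (with $A=0$ the formula \eqref{3.0} collapses to $G^{i}=\overset{a}{G^{i}}$, as the paper notes, and adding $\theta ^{\ast i}=\overset{a}{\theta ^{\ast i}}$ makes the geodesic equations \eqref{XXX} identical), and iii) $\Rightarrow$ i) is correct (the purely Hermitian metric $a$ is complex Douglas, and the Douglas-type invariants are by construction projective invariants, hence shared with $F$).

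The gap is in i) $\Rightarrow$ ii), in two places. First, the pivotal assertion that the Douglas hypothesis forces $\mathcal{G}^{i}-\overset{a}{\mathcal{G}}^{i}=P\eta ^{i}$ is not proved: vanishing of the Douglas invariants says only that the spray of $F$ is projectively equivalent to \emph{some} spray that is holomorphic quadratic plus a $(1,1)$-part in $(\eta ,\bar{\eta})$; identifying that spray with the one attached to the specific metric $a$ is exactly the content of i) $\Rightarrow$ iii) and cannot be waved through by remarking that $\overset{a}{\mathcal{G}}^{i}$ "has the right shape". Second, and more concretely, your extraction of $A=0$ is wrong: the $2\alpha a^{\bar{r}i}$-piece of $\frac{\beta }{4|\beta |}k^{\bar{r}i}\frac{\partial b_{\bar{r}}}{\partial z^{j}}\eta ^{j}$ in \eqref{3.0} does have a partner, namely the bare $a^{\bar{m}i}$-term $-\frac{\alpha \beta }{|\beta |}\left(\Gamma _{l\bar{r}\bar{m}}b^{l}\bar{\eta}^{r}+2\Omega _{\bar{m}}\right)a^{\bar{m}i}$ contributed by $-\frac{1}{2}\theta ^{\ast i}$ through \eqref{4.4}, which carries the very same transcendental prefactor $\alpha \beta /|\beta |$ (note that $\Omega _{\bar{m}}$ itself contains $z$-derivatives of $b$). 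If your "no partner elsewhere" claim were right, that block would have to vanish on its own, giving the \emph{vector} identity $a^{\bar{r}i}\frac{\partial b_{\bar{r}}}{\partial z^{j}}\eta ^{j}=0$, i.e. $\frac{\partial b_{\bar{r}}}{\partial z^{j}}\equiv 0$ --- far stronger than the scalar condition $A=0$, and false in the paper's own Example I, where $F$ is complex Berwald (hence Douglas, with $A=0$) while $b_{1}=2w/(|z|^{2}-|w|^{2})$ is manifestly not $z$-constant. So the cancellation between the \eqref{3.0}- and \eqref{4.4}-contributions is essential, and the separation-of-transcendents bookkeeping breaks down precisely where the real work of the theorem lies; the obstacle you flag at the end is not a technicality to be checked but the core of the proof, and as organized here it leads to a false intermediate conclusion.
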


\noindent According to \cite{Al-Mu3}, if the complex Randers space $(M,F)$ is a
complex Douglas, then $\Omega _{\bar{m}}=-\frac{1}{2}\Gamma _{l\bar{r}\bar{m}%
}b^{l}\bar{\eta}^{r},$ $\Omega _{\bar{m}}b^{\bar{m}}=0$ and $\Gamma _{l\bar{r%
}\bar{m}}b^{\bar{m}}=0.$ Also, in addition if $a$ is K\"{a}hler, then $(M,F)$
is a complex Berwald space and $N_{j}^{i}=\overset{a}{N_{j}^{i}}$. Conversely, if it is a complex Berwald space, then by (\ref{4.4}), $a$ is K%
\"{a}hler. Moreover, if the complex Randers metric is complex Berwald, one sees immediately that $K_{j\bar{k}h}^{i}=\overset{a}{K_{j%
\bar{k}h}^{i}}$. Then by (\ref{120}), $W_{j\bar{k}h}^{i}=\overset{a}{W_{j%
\bar{k}h}^{i}},$ where $\overset{a}{W_{j\bar{k}h}^{i}}$ is projective
curvature invariant of Weyl type corresponding to $a.$

Now, taking into account Theorem 2.2, another necessary and sufficient
condition for local projective flatness can be provided.
\begin{proposition}
Let $F$ be a complex Randers metric on domain $D$ from $\mathbb{C}^{n}.$ $F$
is locally projectively flat if and only if $\ a$ and $F$ are projectively
related and $a$ is locally projectively flat. Moreover, any of these
assertions implies $\mathcal{K}_{F}=\mathcal{K}_{a}=0.$
\end{proposition}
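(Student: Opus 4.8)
The plan is to derive both implications from the machinery already in place: Corollary 2.3 and Theorem 2.4 (which characterize locally projectively flat metrics), Theorem 4.1 together with the complex Berwald refinements stated just after it (which govern complex Randers metrics whose underlying Hermitian metric is ``projectively compatible''), and Theorem 2.2 (the structure theorem for complex Berwald metrics with vanishing Weyl invariant). I fix a coordinate chart in $D$ and assume $n\geq 2$, the range in which Theorem 2.2 and the curvature part of Corollary 2.3 are available. Two facts will be used repeatedly. First, a genuine complex Randers metric $F=\alpha +|\beta|$ is never purely Hermitian: $F^{2}=\alpha ^{2}+2\alpha |\beta|+|\beta|^{2}$ and the cross term $\alpha |\beta|=\sqrt{(a_{i\bar{j}}\eta ^{i}\bar{\eta}^{j})(b_{k}\bar{b}_{l}\eta ^{k}\bar{\eta}^{l})}$ is not a Hermitian quadratic form in $\eta$ unless $b_{k}\bar{b}_{l}$ is proportional to $a_{k\bar{l}}$, which is impossible for $n\geq 2$ since $b\neq 0$ (because $||b||^{2}\in (0,1]$). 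Second, a purely Hermitian complex Berwald metric is automatically Kähler, since in the purely Hermitian case weakly Kähler and Kähler coincide.

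\textit{Forward implication.} Assume $F$ is locally projectively flat. By Corollary 2.3, $F$ is a complex Berwald metric with $W_{j\bar{k}h}^{i}=0$. Since every complex Berwald space is a complex Douglas space, Theorem 4.1 gives that $a$ and $F$ are projectively related and that $A=0$, so by the reduction of \eqref{3.0} we have $G^{i}=\overset{a}{G^{i}}$; moreover, $F$ being complex Berwald, the discussion following Theorem 4.1 yields that $a$ is Kähler, $N_{j}^{i}=\overset{a}{N_{j}^{i}}$, $K_{j\bar{k}h}^{i}=\overset{a}{K_{j\bar{k}h}^{i}}$ and $W_{j\bar{k}h}^{i}=\overset{a}{W_{j\bar{k}h}^{i}}=0$. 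Now Theorem 2.2 applies to $F$: because $F$ is non-Hermitian, its conclusion forces $\mathcal{K}_{F}=0$ and $K_{j\bar{k}h}^{i}=0$, whence $\overset{a}{K_{j\bar{k}h}^{i}}=0$. For the purely Hermitian Kähler metric $a$ one has $\overset{a}{K_{j\bar{k}h}^{i}}=-\delta _{\bar{k}}\overset{a}{L_{jh}^{i}}$, and the coefficients $\overset{a}{L_{jh}^{i}}$ depend on $z$ alone (they are the Chern connection coefficients of $a$), so this tensor is exactly the Chern curvature of $a$; its vanishing means $a$ is flat, hence in suitable holomorphic coordinates $a_{i\bar{j}}$ has constant coefficients. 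Then $\overset{a}{G^{i}}=0$, $\mathcal{K}_{a}=0$, and $a$ is trivially locally projectively flat (its geodesics are straight lines). Thus $a$ and $F$ are projectively related and $a$ is locally projectively flat, and moreover $\mathcal{K}_{F}=\mathcal{K}_{a}=0$.

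\textit{Converse.} Assume $a$ and $F$ are projectively related and $a$ is locally projectively flat. Theorem 4.1 gives that $F$ is complex Douglas, so $A=0$ and again $G^{i}=\overset{a}{G^{i}}$. Corollary 2.3 applied to the purely Hermitian metric $a$ gives that $a$ is complex Berwald with $\overset{a}{W_{j\bar{k}h}^{i}}=0$, hence Kähler; the discussion after Theorem 4.1 then gives that $F$ is complex Berwald with $K_{j\bar{k}h}^{i}=\overset{a}{K_{j\bar{k}h}^{i}}$ and $W_{j\bar{k}h}^{i}=\overset{a}{W_{j\bar{k}h}^{i}}=0$. Exactly as above, Theorem 2.2 applied to the non-Hermitian $F$ yields $\mathcal{K}_{F}=0$ and $K_{j\bar{k}h}^{i}=0$, so $\overset{a}{K_{j\bar{k}h}^{i}}=0$ and $a$ is flat; thus $\mathcal{K}_{a}=0$ and, in suitable holomorphic coordinates, $\overset{a}{G^{i}}=0$, whence $G^{i}=\overset{a}{G^{i}}=0$. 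Since $F$ is complex Berwald it is weakly Kähler, so $\theta ^{\ast i}=0$, and the geodesic equation \eqref{XXX} reduces to $\frac{d^{2}\gamma ^{k}}{dt^{2}}=0$: the $F$-geodesics are straight lines, so $F$ is projectively related to the standard Euclidean metric on the chart and hence locally projectively flat, again with $\mathcal{K}_{F}=\mathcal{K}_{a}=0$.

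I expect the main obstacle to be the passage ``$K_{j\bar{k}h}^{i}=0$ for $F$ $\Longrightarrow$ $a$ is locally flat'': this requires recognizing that, for a purely Hermitian Kähler metric, $\overset{a}{K_{j\bar{k}h}^{i}}$ is nothing but the Chern curvature, and then using the local triviality of a flat Kähler metric. The remaining work is the careful ordering of the Berwald/Douglas/Kähler equivalences so that Theorem 2.2 can be invoked in its non-Hermitian branch, for which it is essential that a genuine complex Randers metric is never purely Hermitian.
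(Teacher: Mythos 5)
Your proof is correct and reaches the stated conclusions, but it travels a genuinely different road from the paper's. The paper's own proof is very short: it delegates the core equivalence (that projective relation of a complex Randers metric to the standard Euclidean metric passes to and from the underlying Hermitian metric $a$) to Theorem 4.4 of the cited reference \cite{Al-Mu2}, adds the K\"{a}hler/Berwald bookkeeping from Theorem 4.1, and then reads off $\mathcal{K}_{F}=\mathcal{K}_{a}=0$ from $W_{j\bar{k}h}^{i}=\overset{a}{W_{j\bar{k}h}^{i}}=0$ and $K_{j\bar{k}h}^{i}=\overset{a}{K_{j\bar{k}h}^{i}}=0$. You instead stay entirely inside the results reproduced in the paper and make Theorem 2.2 the engine of both implications: after observing that a genuine Randers metric with $b\neq 0$ is never purely Hermitian for $n\geq 2$, the non-Hermitian branch of Theorem 2.2 forces $K_{j\bar{k}h}^{i}=0$, hence $\overset{a}{K_{j\bar{k}h}^{i}}=0$, i.e.\ the Chern curvature of the K\"{a}hler metric $a$ vanishes, so $a$ is locally flat and $G^{i}=\overset{a}{G^{i}}=0$ in suitable coordinates. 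This buys a self-contained argument and in fact proves something stronger than the statement ($a$ is flat, and $F$ is essentially locally Minkowski, consistent with the paper's example following Theorem 4.13); the curvature clause then comes for free rather than as an afterthought. The price is the explicit restriction to $n\geq 2$, which you are right to impose: Theorem 2.2, the curvature part of Corollary 2.3, and your non-Hermitian observation all need it, and for $n=1$ the ``moreover'' clause fails anyway since every one-dimensional Hermitian metric is locally projectively flat without having zero curvature — so you have only made explicit a hypothesis the paper leaves tacit. One small stylistic remark: in the converse you could shortcut the flatness detour by verifying $G^{i}=\frac{1}{F}\frac{\partial F}{\partial z^{k}}\eta^{k}\eta^{i}$ directly (as in the paper's proof of Corollary 4.12) and invoking Theorem 2.4, but your route through the vanishing of $\overset{a}{K_{j\bar{k}h}^{i}}$ is equally valid and arguably more informative.
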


\begin{proof} If $\ F$ is locally projectively flat, then it is
projectively related with standard Euclidean metric on $D,$ and according to
Theorem 4.4 from \cite{Al-Mu2} it results the local projective flatness
for $a$ and then the projectively related property for $a$ and $F.$
Conversely, if $a$ is locally projectively flat, then it is K\"{a}hler. This
together with the projectively related property for $a$ and $F$ give that $F$ is
complex Berwald. Now, apllying again Theorem 4.4 from \cite{Al-Mu2} gives
the local projective flatness for $F$. Moreover, since $W_{j\bar{k}%
h}^{i}=\overset{a}{W_{j\bar{k}h}^{i}}=0$ and $K_{j\bar{k}h}^{i}=\overset{a}{%
K_{j\bar{k}h}^{i}}=0,$ we also obtain $\mathcal{K}_{F}=\mathcal{K}_{a}=0.$%
\end{proof}

\subsubsection{Connection between $a$ and $h$}

In order to establish when $h$ and $F$ are projectively related, that is they
have the same geodesics as point sets, it is necessary to find some links
between $a$ and $h,$ in terms of generalized navigation data $%
(h,||u(z)||_{h},W),$ the Hermitian metric $a$ being only an intermediary
on the way to the complex Randers solution $F.$ Thus, starting with the relations
between the Hermitian metrics $a$ and $h$%
\begin{eqnarray}
a_{i\bar{j}} &=&\frac{h_{i\bar{j}}}{\tilde{\varepsilon}}+\frac{W_{i}}{\tilde{%
\varepsilon}}\frac{\bar{W}_{j}}{\tilde{\varepsilon}},\quad \text{ }b_{i}=%
\frac{W_{i}}{\tilde{\varepsilon}},\quad W_{i}=h_{i\bar{j}}\bar{W}^{j}\text{ }%
;\quad \text{ }  \label{III'} \\
a^{\bar{j}i} &=&\tilde{\varepsilon}h^{\bar{j}i}-\frac{\tilde{\varepsilon}}{%
||u||_{h}^{2}}W^{i}\bar{W}^{j},\quad \text{ }\tilde{\varepsilon}%
=||u||_{h}^{2}-||W||_{h}^{2}=f^{2}(1-||b||^{2});  \notag \\
b^{i} &:&=a^{\bar{j}i}b_{\bar{j}}=\frac{\tilde{\varepsilon}}{||u||_{h}^{2}}%
W^{i}=(1-||b||^{2})W^{i},\quad ||b||^{2}:=b_{i}b^{i}=\frac{||W||_{h}^{2}}{%
||u||_{h}^{2}};  \notag \\
h &:&=||\eta ||_{h}=\sqrt{h_{i\bar{j}}\eta ^{i}\bar{\eta}^{j}}=\tilde{%
\varepsilon}(\alpha ^{2}-|\beta |^{2}),\quad \text{ }a:=||\eta ||_{a}=\sqrt{%
a_{i\bar{j}}\eta ^{i}\bar{\eta}^{j}},  \notag
\end{eqnarray}%
after a straightforward computation, we are lead to
\begin{equation}
\overset{a}{G^{i}}=\overset{h}{G^{i}}-\frac{1}{2\tilde{\varepsilon}}\frac{%
\partial \tilde{\varepsilon}}{\partial z^{j}}\eta ^{j}\eta ^{i}+\frac{1}{2%
\tilde{\varepsilon}}W_{0}h^{\bar{m}i}\frac{\partial W_{\bar{m}}}{\partial
z^{j}}\eta ^{j}+\frac{1}{2\tilde{\varepsilon}f^{2}}\frac{\partial \bar{W}^{m}%
}{\partial z^{j}}\left(\tilde{\varepsilon}h_{0\bar{m}}+W_{0}W_{\bar{m}}-W_{0}%
\frac{\partial f^{2}}{\partial z^{j}}\right)\eta ^{j}W^{i},  \label{IV}
\end{equation}%
where the index $0$ means the contraction by $\eta $. In terms of
generalized navigation data $(h,||u||_{h},W)$, $F$ is generalized Berwald,
(i.e. $(\overset{a}{\delta _{k}}|\beta |^{2})\eta ^{k}=0),$ if and only if%
\begin{equation}
\left(W_{0}\frac{\partial W_{\bar{0}}}{\partial z^{j}}+W_{\bar{0}}\frac{\partial
W_{0}}{\partial z^{j}}\right)\eta ^{j}=2\left(\overset{a}{G^{l}}W_{l}W_{\bar{0}}+\frac{1%
}{\tilde{\varepsilon}}\frac{\partial \tilde{\varepsilon}}{\partial z^{j}}%
\eta ^{j}\right),  \label{IV''}
\end{equation}%
rewritten in an equivalent form as%
\begin{equation}
\tilde{\varepsilon}h_{0\bar{m}}W_{\bar{0}}\frac{\partial \bar{W}^{m}}{%
\partial z^{j}}\eta ^{j}=W_{0}\left( W_{\bar{0}}\frac{\partial f^{2}}{%
\partial z^{j}}-W_{\bar{0}}W_{\bar{m}}\frac{\partial \bar{W}^{m}}{\partial
z^{j}}-f^{2}\frac{\partial W_{\bar{0}}}{\partial z^{j}}\right) \eta ^{j}.
\label{IV'}
\end{equation}

\begin{corollary}
The complex Randers solution $F$ of generalized ZNP is generalized Berwald if and only
if
\begin{equation}
\overset{a}{G^{i}}=\overset{h}{G^{i}}-\frac{1}{2\tilde{\varepsilon}}\frac{%
\partial \tilde{\varepsilon}}{\partial z^{j}}\eta ^{j}\eta ^{i}.  \label{V}
\end{equation}
\end{corollary}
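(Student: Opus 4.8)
The corollary should follow by combining the general expansion \eqref{IV} with the characterization of the generalized Berwald condition obtained just above. Recall that the discussion preceding the statement shows ``$F$ is generalized Berwald $\iff$ \eqref{IV''} $\iff$ \eqref{IV'}''; hence the only thing left to prove is that \eqref{V} is equivalent to \eqref{IV''}. Now write \eqref{IV} in the form $\overset{a}{G^i}=\overset{h}{G^i}-\frac{1}{2\tilde\varepsilon}\frac{\partial\tilde\varepsilon}{\partial z^j}\eta^j\eta^i+E^i$, where $E^i$ denotes the sum of the last two terms on the right-hand side of \eqref{IV}; then \eqref{V} says exactly $E^i=0$, so it suffices to show $E^i=0\iff\eqref{IV'}$.

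To get $E^i=0\Rightarrow\eqref{IV'}$ I would lower the free index with $h_{i\bar k}$, using $h_{i\bar k}W^i=W_{\bar k}$ and $W_{\bar m}=h_{l\bar m}W^l$: the first term of $E^i$ then becomes $\tfrac{1}{2\tilde\varepsilon}W_0(\partial_j W_{\bar k})\eta^j$, and contracting further with $\bar\eta^k$ and clearing the common factor $\tfrac{1}{2\tilde\varepsilon f^2}$ turns $E^i=0$ into precisely \eqref{IV'}. For the reverse implication I would substitute \eqref{IV} once more, this time for the $\overset{a}{G^l}$ occurring on the right-hand side of \eqref{IV''}, which rewrites \eqref{IV''} entirely in the navigation data $(h,||u||_h,W)$; using $\overset{h}{G^i}=\tfrac12 h^{\bar m i}(\partial_j h_{l\bar m})\eta^l\eta^j$ together with the product rule $\partial_j W_{\bar m}=(\partial_j h_{l\bar m})W^l+h_{l\bar m}\partial_j W^l$, the resulting identity should reorganize exactly into $E^i=0$. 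Invoking ``generalized Berwald $\iff$ \eqref{IV''}'' then gives the claimed equivalence.

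The main obstacle is the bookkeeping required to pass between the vector identity \eqref{V} and the scalar relation \eqref{IV''}: one has to separate cleanly the piece of $E^i$ lying along $h^{\bar m i}\partial_j W_{\bar m}\eta^j$ (with scalar coefficient $\tfrac{1}{2\tilde\varepsilon}W_0$) from the piece lying along $W^i$, keep track of the non-holomorphy of $W$ so that the terms in $\partial_j\bar W^m$ are handled correctly, and verify that re-expanding $\overset{a}{G^l}W_lW_{\bar 0}$ via \eqref{IV} reproduces exactly the $\overset{h}{G}$- and $\partial W$-terms hidden inside $E^i$. No ingredient beyond the computations already used to establish \eqref{IV}, \eqref{III'} and \eqref{IV''} is needed; the argument is purely algebraic.
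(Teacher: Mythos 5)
Your reduction of the corollary to the equivalence ``\eqref{V} $\iff$ \eqref{IV'}'' is the right frame, and your treatment of the direction $E^i=0\Rightarrow\eqref{IV'}$ (lower the index with $h_{i\bar k}$ and contract with $\bar\eta^k$, i.e.\ contract $E^i=0$ with $h_{i\bar 0}$) is exactly what the paper does for that half. The gap is in the other direction. Equation \eqref{IV'} is a single scalar identity in $(\eta,\bar\eta)$, whereas \eqref{V}, i.e.\ $E^i=0$, is a vector identity with a free index $i$; no substitution of \eqref{IV} into \eqref{IV''} followed by ``reorganization'' can turn the former into the latter, because $E^i$ has a component along the generic direction $h^{\bar m i}\frac{\partial W_{\bar m}}{\partial z^j}\eta^j$ and a component along $W^i$, and the contraction $E^ih_{i\bar 0}=0$ alone does not force both to vanish.

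What is actually needed — and what the paper does — is to exploit that \eqref{IV'} holds identically in $\eta$ and $\bar\eta$ and extract pointwise tensorial constraints from it: differentiate \eqref{IV'} with respect to $\bar\eta^s$ and contract with $\bar W^s$ to obtain the auxiliary relation \eqref{VI}; substitute \eqref{VI} back into \eqref{IV'} to get $\frac{\partial W_{\bar m}}{\partial z^j}\left(\bar\eta^m-\frac{1}{||W||_h^2}W_{\bar 0}\bar W^m\right)\eta^j=0$; and differentiate once more in $\bar\eta^s$ and $\eta^k$ to conclude the rank-one condition $\frac{\partial W_{\bar m}}{\partial z^j}=\frac{1}{||W||_h^2}\frac{\partial W_{\bar s}}{\partial z^j}\bar W^s\,W_{\bar m}$. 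It is only this structural information on $\partial_j W_{\bar m}$ (together with \eqref{VI}) that, once substituted into \eqref{IV}, makes the two extra terms of $E^i$ cancel and yields \eqref{V}. Your plan omits this differentiation-and-contraction step entirely, so as written the harder implication does not go through.
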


\begin{proof} For the direct implication we start with formula (\ref{IV'}%
), which differentiated with respect to $\bar{\eta}^{s}$ and next
contracted by $\bar{W}^{s}$ gives%
\begin{equation}
\tilde{\varepsilon}h_{0\bar{m}}\frac{\partial \bar{W}^{m}}{\partial z^{j}}%
\eta ^{j}=W_{0}\left( \frac{\partial f^{2}}{\partial z^{j}}-W_{\bar{m}}\frac{%
\partial \bar{W}^{m}}{\partial z^{j}}-\frac{f^{2}}{||W||_{h}^{2}}\frac{%
\partial W_{\bar{m}}}{\partial z^{j}}\bar{W}^{m}\right) \eta ^{j}.
\label{VI}
\end{equation}%
Now, the last relation substituted into formula (\ref{IV'}) implies $\frac{%
\partial W_{\bar{m}}}{\partial z^{j}}(\bar{\eta}^{m}-\frac{1}{||W||_{h}^{2}}%
W_{\bar{0}}\bar{W}^{m})\eta ^{j}=0.$ By differentiation with respect to $%
\bar{\eta}^{s}$ and then with $\eta ^{k},$ we deduce that $\frac{\partial
W_{\bar{m}}}{\partial z^{j}}=\frac{1}{||W||_{h}^{2}}\frac{\partial W_{\bar{s}%
}}{\partial z^{j}}\bar{W}^{s}W_{\bar{m}}.$ Substitution of the last relation
and (\ref{VI}) in (\ref{IV}) yields (\ref{V}).
Conversely, if $\overset{a}{G^{i}}=\overset{h}{G^{i}}-\frac{1}{2\tilde{%
\varepsilon}}\frac{\partial \tilde{\varepsilon}}{\partial z^{j}}\eta
^{j}\eta ^{i}$ then the formula (\ref{IV}) is%
\begin{equation}
W_{0}h^{\bar{m}i}\frac{\partial W_{\bar{m}}}{\partial z^{j}}\eta ^{j}+\frac{1%
}{f^{2}}\frac{\partial \bar{W}^{m}}{\partial z^{j}}\left(\tilde{\varepsilon}h_{0%
\bar{m}}+W_{0}W_{\bar{m}}-W_{0}\frac{\partial f^{2}}{\partial z^{j}}\right)\eta
^{j}W^{i}=0,  \notag
\end{equation}%
which contracted with $h_{i\bar{0}}$ gives (\ref{IV'}). Hence, $F$ is
generalized Berwald.
\end{proof}

\noindent Also, under generalized Berwald assumption for the complex Randers
solution $F$ of ZNP, some computation give the connection between
holomorphic curvature in direction $\eta ,$ corresponding to $h$ and $a$:%
\begin{equation}
\mathcal{K}_{h}(z,\eta )=\frac{\tilde{\varepsilon}\alpha ^{4}}{h^{4}}%
\mathcal{K}_{a}(z,\eta )+\frac{4}{\tilde{\varepsilon}h^{4}}W_{\bar{0}}W_{l}%
\frac{\partial \overset{a}{G^{l}}}{\partial \bar{z}^{m}}\bar{\eta}^{m}-\frac{%
2}{\tilde{\varepsilon}h^{2}}\left(\frac{\partial \tilde{\varepsilon}}{\partial
\bar{z}^{m}}\frac{\partial \tilde{\varepsilon}}{\partial z^{j}}-\frac{%
\partial ^{2}\tilde{\varepsilon}}{\partial \bar{z}^{m}\partial z^{j}}\right)\eta
^{j}\bar{\eta}^{m},  \label{III.0}
\end{equation}%
and taking into account (\ref{3.0'}), we have the connection between
holomorphic curvature in direction $\eta ,$ corresponding to $h,$ $a$ and $F$%
:%
\begin{equation}
\mathcal{K}_{F}(z,\eta )=\frac{1}{F^{2}\sqrt{W_{0}W_{\bar{0}}}}\left[\tilde{%
\varepsilon}\alpha ^{3}\mathcal{K}_{a}(z,\eta )-\frac{h^{4}}{F}\mathcal{K}%
_{h}(z,\eta )+\frac{2h^{2}}{\tilde{\varepsilon}F}\left(\frac{1}{\tilde{\varepsilon%
}}\frac{\partial \tilde{\varepsilon}}{\partial \bar{z}^{m}}\frac{\partial
\tilde{\varepsilon}}{\partial z^{j}}-\frac{\partial ^{2}\tilde{\varepsilon}}{%
\partial \bar{z}^{m}\partial z^{j}}\right)\eta ^{j}\bar{\eta}^{m}\right].  \label{III.0'}
\end{equation}

\noindent Under assumption (\ref{V}), by computation we obtain%
\begin{equation}
\overset{h}{\theta ^{\ast i}}=\overset{a}{\theta ^{\ast i}}-\frac{1}{\tilde{%
\varepsilon}}\frac{\partial \tilde{\varepsilon}}{\partial \bar{z}^{m}}\bar{%
\eta}^{m}\eta ^{i}-\frac{\tilde{\varepsilon}}{f^{2}}\Gamma _{0\bar{0}\bar{m}}%
\bar{W}^{m}W^{i}+\left[\frac{1}{\tilde{\varepsilon}}\frac{\partial \tilde{%
\varepsilon}}{\partial \bar{z}^{m}}h_{0\bar{0}}+W_{0}\left(\frac{\tilde{%
\varepsilon}}{f^{2}}\Gamma _{l\bar{0}\bar{m}}W^{l}+2\Omega _{\bar{m}}\right)\right]h^{%
\bar{m}i}.  \label{IV.10}
\end{equation}

\begin{corollary}
The complex Randers solution $F$ of generalized ZNP is Douglas if and only if%
\begin{equation}
\overset{h}{G^{i}}=\overset{a}{G^{i}}+\frac{1}{2\tilde{\varepsilon}}\frac{%
\partial \tilde{\varepsilon}}{\partial z^{j}}\eta ^{j}\eta ^{i};\quad
\overset{h}{\theta ^{\ast i}}=\overset{a}{\theta ^{\ast i}}+\frac{1}{\tilde{%
\varepsilon}}\frac{\partial \tilde{\varepsilon}}{\partial \bar{z}^{m}}\left(
h_{0\bar{0}}h^{\bar{m}i}-\bar{\eta}^{m}\eta ^{i}\right) .  \label{IV.11}
\end{equation}
\end{corollary}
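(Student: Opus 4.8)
The plan is to combine the generalized-Berwald characterization of Corollary 4.10 with Theorem 4.3 (the Douglas criterion for complex Randers spaces). Recall that by Theorem 4.3, the complex Randers solution $F$ is Douglas if and only if $A=0$ (equivalently $F$ is generalized Berwald) and $\overset{h}{\theta ^{\ast i}}$-type quantities match through $\theta ^{\ast i}=\overset{a}{\theta ^{\ast i}}$. So the first step is to invoke Corollary 4.10: under the generalized Berwald hypothesis the relation (\ref{V}), namely $\overset{a}{G^{i}}=\overset{h}{G^{i}}-\frac{1}{2\tilde{\varepsilon}}\frac{\partial \tilde{\varepsilon}}{\partial z^{j}}\eta ^{j}\eta ^{i}$, holds; rearranged this is exactly the first equation in (\ref{IV.11}). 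Thus the spray part of the claim is immediate from Corollary 4.10, and the work is concentrated in the $\theta ^{\ast i}$ part.

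For the $\theta ^{\ast i}$ part I would start from formula (\ref{IV.10}), which already expresses $\overset{h}{\theta ^{\ast i}}$ in terms of $\overset{a}{\theta ^{\ast i}}$ plus correction terms involving $\frac{\partial \tilde{\varepsilon}}{\partial \bar z^{m}}$, $\Gamma _{0\bar 0\bar m}$, $\Gamma _{l\bar 0\bar m}W^{l}$ and $\Omega _{\bar m}$. The key is that, by Theorem 4.3, $F$ being Douglas forces $\theta ^{\ast i}=\overset{a}{\theta ^{\ast i}}$ for the Randers metric, and by the remark following Theorem 4.3 this yields the structural identities $\Omega _{\bar m}=-\tfrac12\Gamma _{l\bar r\bar m}b^{l}\bar\eta^{r}$, $\Gamma _{l\bar r\bar m}b^{\bar m}=0$, and $\Omega _{\bar m}b^{\bar m}=0$. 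Using $b^{i}=(1-\|b\|^{2})W^{i}$ from (\ref{III'}), the conditions $\Gamma _{l\bar r\bar m}b^{\bar m}=0$ translate into $\Gamma _{l\bar 0\bar m}\bar W^{m}=0$ and $\Gamma _{0\bar 0\bar m}\bar W^{m}=0$, which kills the term $\frac{\tilde{\varepsilon}}{f^{2}}\Gamma _{0\bar 0\bar m}\bar W^{m}W^{i}$ in (\ref{IV.10}); and the combination $\frac{\tilde{\varepsilon}}{f^{2}}\Gamma _{l\bar 0\bar m}W^{l}+2\Omega _{\bar m}$ inside the bracket collapses using $\Omega _{\bar m}=-\tfrac12\Gamma _{l\bar r\bar m}b^{l}\bar\eta^{r}$ together with the proportionality between $b^{l}$ and $W^{l}$. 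After these cancellations (\ref{IV.10}) reduces precisely to the second equation in (\ref{IV.11}). Conversely, assuming (\ref{IV.11}) one runs the same substitutions backward: the first equation gives generalized Berwald via Corollary 4.10, and then comparing the reduced (\ref{IV.10}) with the assumed second equation recovers the vanishing of the $\Gamma$-contractions and hence, via (\ref{4.4}), $\theta ^{\ast i}=\overset{a}{\theta ^{\ast i}}$, so Theorem 4.3 gives that $F$ is Douglas.

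The main obstacle I anticipate is the bookkeeping in the backward direction: showing that the single tensorial identity (second equation of (\ref{IV.11})) is strong enough to force all three structural relations $\Omega _{\bar m}=-\tfrac12\Gamma _{l\bar r\bar m}b^{l}\bar\eta^{r}$, $\Gamma _{l\bar r\bar m}b^{\bar m}=0$, $\Omega _{\bar m}b^{\bar m}=0$ needed to invoke Theorem 4.3. This is the analogue of the differentiation-and-contraction trick used in the proof of Corollary 4.10 (differentiate in $\bar\eta^{s}$, contract with $\bar W^{s}$, then differentiate in $\eta^{k}$): the dependence of the correction terms on $\eta,\bar\eta$ is polynomial of low degree, so differentiating (\ref{IV.10}) in $\bar\eta^{s}$ and contracting appropriately with $\bar W^{s}$ (respectively with $h^{s\bar\cdot}$) should separate the $\Gamma _{0\bar 0\bar m}$-term from the bracketed term and let one read off each identity in turn. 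Once these are extracted, plugging back into (\ref{4.4}) to conclude $\theta ^{\ast i}=\overset{a}{\theta ^{\ast i}}$, and then applying Theorem 4.3, completes the equivalence. The remaining computations are routine index manipulations and I would not carry them out in detail.
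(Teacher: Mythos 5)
Your plan follows the paper's own proof in all essentials: the forward direction substitutes the Douglas structural identities $\Omega_{\bar m}=-\frac{\tilde\varepsilon}{2f^2}\Gamma_{l\bar r\bar m}W^l\bar\eta^r$, $\Gamma_{l\bar r\bar m}\bar W^m=0$, $\Omega_{\bar m}\bar W^m=0$ (together with the generalized Berwald relation (\ref{V})) into (\ref{IV.10}), and the backward direction compares (\ref{IV.10}) with the assumed second equation and separates the correction terms to recover those identities and hence $\theta^{*i}=\overset{a}{\theta^{*i}}$. The only cosmetic difference is that the paper performs the separation by contracting the residual identity with $h_{i\bar 0}$ to first isolate $\Gamma_{0\bar 0\bar m}\bar W^m=0$, rather than by the differentiation-and-contraction scheme you sketch; both do the same job.
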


\begin{proof} If $F$ is complex Douglas, then we have (\ref{V}) and $%
\Omega _{\bar{m}}=-\frac{\tilde{\varepsilon}}{2f^{2}}\Gamma _{l\bar{r}\bar{m}%
}W^{l}\bar{\eta}^{r},$ $\Omega _{\bar{m}}W^{\bar{m}}=0$ and $\Gamma _{l\bar{r%
}\bar{m}}W^{\bar{m}}=0.$ Substituting these in (\ref{IV.10}) we obtain (%
\ref{IV.11}). Conversely, the relations (\ref{IV.11}) imply the
generalized Berwald property for $F$ and $W_{0}(\frac{\tilde{\varepsilon}}{%
f^{2}}\Gamma _{l\bar{0}\bar{m}}W^{l}+2\Omega _{\bar{m}})]h^{\bar{m}i}=\frac{%
\tilde{\varepsilon}}{f^{2}}\Gamma _{0\bar{0}\bar{m}}\bar{W}^{m}W^{i},$ which
by contraction with $h_{i\bar{0}},$ it gives $\Gamma _{0\bar{0}\bar{m}}\bar{W%
}^{m}=0$ and then, $\Gamma _{l\bar{r}\bar{m}}W^{\bar{m}}=0$, $\Omega _{\bar{m%
}}=-\frac{\tilde{\varepsilon}}{2f^{2}}\Gamma _{l\bar{r}\bar{m}}W^{l}\bar{\eta%
}^{r}$ and $\Omega _{\bar{m}}W^{\bar{m}}=0.$ These give $\theta ^{\ast i}=%
\overset{a}{\theta ^{\ast i}}$ and so, $F$ is Douglas.
\end{proof}

\begin{theorem}
Let $(M,h)$ be a Hermitian manifold. If the complex Randers solution $F$ of
generalized ZNP is generalized Berwald, then $h$ and $a$ are projectively
related if and only $\overset{h}{\theta ^{\ast i}}=\overset{a}{\theta ^{\ast
i}}$. Moreover, any of these assertions implies $\Gamma _{l\bar{s}\bar{m}}%
\bar{W}^{m}=0,$ $\Omega _{\bar{m}}\bar{W}^{m}=\frac{1}{2\tilde{\varepsilon}}%
\frac{\partial \tilde{\varepsilon}}{\partial \bar{z}^{m}}\bar{\eta}^{m}$ and
$\frac{\partial \tilde{\varepsilon}}{\partial \bar{z}^{m}}\bar{W}^{m}=0.$
\end{theorem}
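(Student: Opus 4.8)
The plan is to prove the two implications of the equivalence separately and to read off the three extra identities along the way. The implication ``if $\overset{h}{\theta^{\ast i}}=\overset{a}{\theta^{\ast i}}$ then $h$ and $a$ are projectively related'' is immediate: since $F$ is generalized Berwald, formula (\ref{V}) gives $\overset{a}{G^i}=\overset{h}{G^i}-\frac{1}{2\tilde\varepsilon}\frac{\partial\tilde\varepsilon}{\partial z^j}\eta^j\eta^i$, and setting $B^i:=\frac12\big(\overset{a}{\theta^{\ast i}}-\overset{h}{\theta^{\ast i}}\big)=0$ and $P:=-\frac{1}{2\tilde\varepsilon}\frac{\partial\tilde\varepsilon}{\partial z^j}\eta^j$ turns (\ref{V}) into a projective change $\overset{a}{G^i}=\overset{h}{G^i}+B^i+P\eta^i$, which is the definition of $h$ and $a$ being projectively related.

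For the converse I would take the projective change $\overset{a}{G^i}=\overset{h}{G^i}+B^i+P\eta^i$ with $B^i=\frac12(\overset{a}{\theta^{\ast i}}-\overset{h}{\theta^{\ast i}})$ guaranteed by $h\sim a$ and compare it with (\ref{V}): the spray parts cancel, so $B^i$ is forced to be a pointwise complex multiple of $\eta^i$, that is $\overset{a}{\theta^{\ast i}}-\overset{h}{\theta^{\ast i}}=\Lambda\eta^i$, where $\Lambda$ --- both $\overset{a}{\theta^{\ast i}}$ and $\overset{h}{\theta^{\ast i}}$ being polynomials of bidegree $(1,1)$ in $(\eta,\bar\eta)$ --- is a polynomial linear in $\bar\eta$. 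Everything then reduces to showing $\Lambda=0$; note that here $a$ is not conformal to $h$, so that the spray change (\ref{V}) has the plain ``$P\eta^i$'' form is precisely what the generalized Berwald hypothesis buys us.

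To prove $\Lambda=0$ I would substitute the explicit expression (\ref{IV.10}) for $\overset{h}{\theta^{\ast i}}-\overset{a}{\theta^{\ast i}}$ (valid under (\ref{V})) into $\overset{h}{\theta^{\ast i}}-\overset{a}{\theta^{\ast i}}=-\Lambda\eta^i$, lower the free index with $h_{i\bar k}$, differentiate once in $\eta^l$ and then pass to $\bar\eta$-coefficients so as to reach a purely $z$-dependent tensor identity; contracting the lowered identity with $\bar\eta^k$ first (using $\Gamma_{l\bar r\bar m}\bar\eta^r\bar\eta^m=0$ and $h_{0\bar k}\bar W^k=W_0$) records the scalar relation $\Lambda\,h_{0\bar0}=\frac{\tilde\varepsilon}{f^2}(\Gamma_{0\bar0\bar m}\bar W^m)W_{\bar0}-2W_0\Omega_{\bar0}$. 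The key structural fact is that, under generalized Berwald, the identity $\frac{\partial W_{\bar m}}{\partial z^j}=\frac{1}{||W||_h^2}\frac{\partial W_{\bar s}}{\partial z^j}\bar W^s W_{\bar m}$ obtained in the course of proving (\ref{V}) makes all $W$-built objects rank-one in the relevant slot --- in particular $\Omega_{\bar m}$ becomes a genuine polynomial of bidegree $(0,1)$ and $a_{l\bar m}\bar W^m=\frac{f^2}{\tilde\varepsilon^2}W_l$ --- whereas $h$ is non-degenerate. Contracting the $z$-dependent identity with $\bar W^k$ and comparing terms under this dichotomy then forces, in turn, $\frac{\partial\tilde\varepsilon}{\partial\bar z^m}\bar W^m=0$ and $\Gamma_{l\bar s\bar m}\bar W^m=0$, hence $\Gamma_{0\bar0\bar m}\bar W^m=0$; the scalar relation collapses to $\Lambda\,h_{0\bar0}=-2W_0\Omega_{\bar0}$, whose left side is of full rank in $\eta$ and right side rank-one in $\eta$, so both vanish. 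This yields $\Lambda=0$, i.e.\ $\overset{h}{\theta^{\ast i}}=\overset{a}{\theta^{\ast i}}$, and $\Omega_{\bar0}=0$; feeding $\Lambda=0$ back into the $\bar W^k$-contracted identity gives the remaining relation $\Omega_{\bar m}\bar W^m=\frac{1}{2\tilde\varepsilon}\frac{\partial\tilde\varepsilon}{\partial\bar z^m}\bar\eta^m$. Since all three displayed identities have arisen along the way, they are implied by either of the equivalent assertions.

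The step I expect to be the main obstacle is this last computation. In (\ref{IV.10}) the difference $\overset{h}{\theta^{\ast i}}-\overset{a}{\theta^{\ast i}}$ is a sum of an $\eta^i$-term, a $W^i$-term with coefficient proportional to $\Gamma_{0\bar0\bar m}\bar W^m$, and an $h^{\bar m i}$-term whose coefficient $C_{\bar m}$ mixes $\Gamma_{l\bar r\bar m}$, $\Omega_{\bar m}$ and $\frac{\partial\tilde\varepsilon}{\partial\bar z^m}$, and these three ``directions'' must be disentangled cleanly; finding the right sequence of contractions and keeping systematic track of the fixed $(\eta,\bar\eta)$-bidegrees together with the rank-one structure imposed by generalized Berwald is where the real work lies, the rest being routine.
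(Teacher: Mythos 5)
Your overall architecture matches the paper's: both directions hinge on (\ref{V}) and (\ref{IV.10}), and the crux is to show that the difference $\overset{a}{\theta ^{\ast i}}-\overset{h}{\theta ^{\ast i}}$, forced to be proportional to $\eta ^{i}$, actually vanishes. But there are two genuine gaps. First, the direction you call immediate is not: being projectively related cannot be cashed out as ``there exists a decomposition $\overset{a}{G^{i}}=\overset{h}{G^{i}}+B^{i}+P\eta ^{i}$ for some smooth complex $P$''. For a generalized Berwald metric the operative characterization is Corollary 2.1, i.e. (\ref{IV.12}), which additionally prescribes $B^{r}=-\frac{1}{h}\overset{a}{\theta ^{\ast l}}(\dot{\partial}_{l}h)\eta ^{r}$ and the specific form of $P$ (these encode the reparametrizability of the real-parameter geodesics). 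With $\overset{h}{\theta ^{\ast i}}=\overset{a}{\theta ^{\ast i}}$ one has $B^{r}=0$, and since $\overset{a}{\theta ^{\ast l}}(\dot{\partial}_{l}h)$ computes to a multiple of $\Gamma _{0\bar{0}\bar{m}}\bar{W}^{m}$, you must still prove $\Gamma _{0\bar{0}\bar{m}}\bar{W}^{m}=0$ (indeed $\Gamma _{k\bar{s}\bar{m}}\bar{W}^{m}=0$) before concluding projective relatedness. That is exactly the content of the paper's $S^{i}=0$ analysis in the converse direction, which is also where the three auxiliary identities arise in that direction; your proposal produces them only in the forward one.

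Second, the decisive computation in the forward direction is not carried out, and the separation mechanism you propose is doubtful. After lowering with $h_{i\bar{k}}$ and contracting with $\bar{W}^{k}$, the resulting bidegree-$(1,1)$ identity contains several terms that are neither full-rank nor rank-one in a way that permits ``comparing terms'': for instance $||W||_{h}^{2}\,\Gamma _{l\bar{r}\bar{m}}\bar{W}^{m}\eta ^{l}\bar{\eta}^{r}$ is a general bilinear form and $\frac{1}{\tilde{\varepsilon}}\frac{\partial \tilde{\varepsilon}}{\partial \bar{z}^{k}}\bar{W}^{k}h_{0\bar{0}}$ is full-rank, so nothing in that single relation forces $\frac{\partial \tilde{\varepsilon}}{\partial \bar{z}^{m}}\bar{W}^{m}=0$ and $\Gamma _{l\bar{s}\bar{m}}\bar{W}^{m}=0$ separately. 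The paper's route is different and does the real work: it contracts $T^{i}$ with $W_{i}$, applies $\dot{\partial}_{k}$ and contracts with $W^{k}$, combines the two into (\ref{IV.13}) --- whose left-hand side carries the genuinely rational coefficient $\frac{W_{\bar{0}}}{h_{0\bar{0}}}\Gamma _{0\bar{0}\bar{m}}\bar{W}^{m}$ --- and then two differentiations in $\eta ^{l},\bar{\eta}^{r}$ followed by contraction with $W^{l}\bar{W}^{r}$ isolate $\Gamma _{0\bar{0}\bar{m}}\bar{W}^{m}=0$, precisely because a rational function of $\eta $ cannot equal a polynomial one unless its numerator vanishes. Your closing ``full-rank versus rank-one'' step for $\Lambda h_{0\bar{0}}=-2W_{0}\Omega _{\bar{0}}$ is sound once the $\Gamma $-terms are gone, but getting to that point is the theorem, and you have flagged it yourself as the unfinished obstacle.
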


\begin{proof}
According to Corollary 2.1, if $h$ and $a$ are projectively
related then%
\begin{eqnarray}
\dot{\partial}_{\bar{r}}(\overset{a}{\delta _{k}}h)\eta ^{k} &=&\frac{1}{h}(%
\overset{a}{\delta _{k}}h)\eta ^{k}(\dot{\partial}_{\bar{r}}h)\;;\;B^{i}=-%
\frac{1}{h}\overset{a}{\theta ^{\ast k}}(\dot{\partial}_{k}h)\eta ^{i}\;;\;
P =\frac{1}{h}\left[(\overset{a}{\delta _{k}}h)\eta ^{k}+\overset{a}{\theta
^{\ast k}}(\dot{\partial}_{k}h)\right],
\label{IV.12}
\end{eqnarray}%
and the projective change is $\overset{h}{G^{i}}=\overset{a}{G^{i}}+\frac{1%
}{h}(\overset{a}{\delta _{k}}h)\eta ^{k}\eta ^{i}$ , where $h:=\sqrt{h_{0%
\bar{0}}}.$ Under generalized Berwald assumption we have $(\overset{a}{%
\delta _{k}}h)\eta ^{k}=\frac{h}{2\tilde{\varepsilon}}\frac{\partial \tilde{%
\varepsilon}}{\partial z^{k}}\eta ^{k},$ $(\dot{\partial}_{\bar{r}}h)=\frac{%
\tilde{\varepsilon}}{2h}(l_{\bar{r}}-\beta b_{\bar{r}}),$ and then $\dot{%
\partial}_{\bar{r}}(\overset{a}{\delta _{k}}h)\eta ^{k}=\frac{1}{4h}(l_{\bar{%
r}}-\beta b_{\bar{r}})\frac{\partial \tilde{\varepsilon}}{\partial z^{k}}%
\eta ^{k}.$ Thus, the first relation from (\ref{IV.12}) is fulfilled and $%
\overset{h}{\theta ^{\ast i}}=\overset{a}{\theta ^{\ast i}}-\frac{\tilde{%
\varepsilon}}{f^{2}h_{0\bar{0}}}W_{\bar{0}}\Gamma _{0\bar{0}\bar{m}}\bar{W}%
^{m}\eta ^{i},$ with $P=\frac{1}{2\tilde{\varepsilon}}\frac{\partial \tilde{%
\varepsilon}}{\partial z^{k}}\eta ^{k}+\frac{\tilde{\varepsilon}W_{\bar{0}}}{%
2f^{2}h_{0\bar{0}}}\Gamma _{0\bar{0}\bar{m}}\bar{W}^{m}.$ Now, taking into
account (\ref{IV.10}) it results that $T^{i}=0,$ where $T^{i}:=\frac{\tilde{%
\varepsilon}}{f^{2}}\Gamma _{0\bar{0}\bar{m}}\bar{W}^{m}(\frac{1}{h_{0\bar{0}%
}}W_{\bar{0}}\eta ^{i}-W^{i})-\frac{1}{\tilde{\varepsilon}}\frac{\partial
\tilde{\varepsilon}}{\partial \bar{z}^{m}}\bar{\eta}^{m}\eta ^{i}+[\frac{1}{%
\tilde{\varepsilon}}\frac{\partial \tilde{\varepsilon}}{\partial \bar{z}^{m}}%
h_{0\bar{0}}+W_{0}(\frac{\tilde{\varepsilon}}{f^{2}}\Gamma _{l\bar{0}\bar{m}%
}W^{l}+2\Omega _{\bar{m}})]h^{\bar{m}i}.$ Hence, $T^{i}W_{i}=0$ and $[\dot{%
\partial}_{k}(T^{i}W_{i})]W^{k}=0$ and also, $-\frac{1}{h_{0\bar{0}}}W_{\bar{%
0}}(T^{i}W_{i})+[\dot{\partial}_{k}(T^{i}W_{i})]W^{k}=0$ which gives%
\begin{equation}
\left(\frac{\tilde{\varepsilon}}{f^{2}h_{0\bar{0}}}W_{\bar{0}}\Gamma _{0\bar{0}%
\bar{m}}+\Omega _{\bar{m}}\right)\bar{W}^{m}=\frac{1}{2\tilde{\varepsilon}}\frac{%
\partial \tilde{\varepsilon}}{\partial \bar{z}^{m}}\bar{\eta}^{m},
\label{IV.13}
\end{equation}%
because $\alpha ^{2}||b||^{2}\neq |\beta |^{2}$ and $(\dot{\partial}%
_{k}\Omega _{\bar{m}})W^{k}=0.$ Two successive differentiations with respect
to $\eta ^{l}$ and $\bar{\eta}^{r}$ in (\ref{IV.13}), followed by the
contraction with $W^{l}\bar{W}^{r}$ imply $\Gamma _{0\bar{0}\bar{m}}\bar{W}%
^{m}=0,$ and then $\overset{h}{\theta ^{\ast i}}=\overset{a}{\theta ^{\ast i}%
}$ and $\Gamma _{l\bar{s}\bar{m}}\bar{W}^{m}=0.$ Plugging this into (\ref%
{IV.13}), we obtain $\Omega _{\bar{m}}\bar{W}^{m}=\frac{1}{2\tilde{%
\varepsilon}}\frac{\partial \tilde{\varepsilon}}{\partial \bar{z}^{m}}\bar{%
\eta}^{m}$ and then, using again $T^{i}W_{i}=0$ it results that $\frac{\partial
\tilde{\varepsilon}}{\partial \bar{z}^{m}}\bar{W}^{m}=0.$
Conversely, if $\overset{h}{\theta ^{\ast i}}=\overset{a}{\theta ^{\ast i}}$
then, by default as (\ref{IV.10}) $S^{i}=0,$ where
$S^{i}:=-\frac{1}{\tilde{\varepsilon}}\frac{\partial \tilde{\varepsilon}}{%
\partial \bar{z}^{m}}\bar{\eta}^{m}\eta ^{i}-\frac{\tilde{\varepsilon}}{f^{2}%
}\Gamma _{0\bar{0}\bar{m}}\bar{W}^{m}W^{i}+[\frac{1}{\tilde{\varepsilon}}%
\frac{\partial \tilde{\varepsilon}}{\partial \bar{z}^{m}}h_{0\bar{0}}+W_{0}(%
\frac{\tilde{\varepsilon}}{f^{2}}\Gamma _{l\bar{0}\bar{m}}W^{l}+2\Omega _{%
\bar{m}})]h^{\bar{m}i}.$ Hence $S^{i}W_{i}=0$ and $[\dot{\partial}%
_{k}(S^{i}W_{i})]W^{k}=0$ which lead to $\Omega _{\bar{m}}\bar{W}^{m}=\frac{1%
}{2\tilde{\varepsilon}}\frac{\partial \tilde{\varepsilon}}{\partial \bar{z}%
^{m}}\bar{\eta}^{m},$ $\frac{\partial \tilde{\varepsilon}}{\partial \bar{z}%
^{m}}\bar{W}^{m}=0$ and $\Gamma _{k\bar{s}\bar{m}}\bar{W}^{m}=\frac{1}{%
||W||_{h}^{2}}W_{k}\Gamma _{l\bar{0}\bar{m}}W^{l}\bar{W}^{m}.$ Substituting all these terms in $S^{i}=0$, we are thus led to the relation which contracted to  $h_{i\bar{0}}$ gives $\Gamma
_{k\bar{s}\bar{m}}\bar{W}^{m}=0.$ Thus, the conditions (\ref{IV.12}) are
fulfilled and then $h$ and $a$ are projectively related.
\end{proof}

\noindent The above theorem allows us to prove the following

\begin{corollary}
Let $(M,h)$ be a Hermitian manifold. If the complex Randers solution $F$ of
generalized ZNP is generalized Berwald and $h$ and $a$ are projectively
related, then $h$ is K\"{a}hler if and only if $a$ is K\"{a}hler.
\end{corollary}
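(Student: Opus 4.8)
The plan is to reduce the statement to the single identity $\overset{h}{\theta^{\ast i}}=\overset{a}{\theta^{\ast i}}$, which is already at our disposal, and then to use the elementary characterization of the K\"{a}hler property for purely Hermitian metrics in terms of the vanishing of $\theta^{\ast}$. Recall that $h$ and $a$ are purely Hermitian, so for $g\in\{h,a\}$ one has $\overset{g}{\theta^{\ast i}}=-\Gamma^{g}_{l\bar{r}\bar{m}}g^{\bar{m}i}\eta^{l}\bar{\eta}^{r}$ with $\Gamma^{g}_{l\bar{r}\bar{m}}:=\dfrac{\partial g_{l\bar{m}}}{\partial\bar{z}^{r}}-\dfrac{\partial g_{l\bar{r}}}{\partial\bar{z}^{m}}$, and $g$ is K\"{a}hler if and only if $\Gamma^{g}_{l\bar{r}\bar{m}}=0$ identically, this being nothing but the closedness of the fundamental $(1,1)$-form written in local coordinates (and using that, for purely Hermitian metrics, K\"{a}hler and weakly K\"{a}hler coincide).

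First I would record, for $g\in\{h,a\}$, the equivalence ``$g$ is K\"{a}hler $\Longleftrightarrow\overset{g}{\theta^{\ast i}}\equiv 0$''. The forward implication is immediate, since $\theta^{\ast i}$ vanishes in the weakly K\"{a}hler case. For the converse, if $\overset{g}{\theta^{\ast i}}=0$ for every $\eta$, then nonsingularity of $(g^{\bar{m}i})$ gives $\Gamma^{g}_{l\bar{r}\bar{m}}\eta^{l}\bar{\eta}^{r}=0$ for all $\eta$; polarizing in $\eta$ (replace $\eta$ by $\eta+t\zeta$ with $t\in\mathbb{C}$ and read off the coefficients of $t$ and $\bar{t}$) forces $\Gamma^{g}_{l\bar{r}\bar{m}}=0$, i.e. $g$ is K\"{a}hler.

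Then I would simply invoke Theorem 4.9: by hypothesis $F$ is generalized Berwald and $h$ and $a$ are projectively related, hence $\overset{h}{\theta^{\ast i}}=\overset{a}{\theta^{\ast i}}$. Combining this with the equivalence of the previous paragraph yields $h$ is K\"{a}hler $\Longleftrightarrow\overset{h}{\theta^{\ast i}}\equiv 0\Longleftrightarrow\overset{a}{\theta^{\ast i}}\equiv 0\Longleftrightarrow a$ is K\"{a}hler, which is the assertion.

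The only point that is not pure bookkeeping is the characterization ``K\"{a}hler $\Longleftrightarrow\theta^{\ast}=0$'' for purely Hermitian metrics, that is, the polarization step above; everything else is a direct consequence of Theorem 4.9. If one wanted a self-contained argument bypassing that characterization, one could instead feed the extra identities supplied by Theorem 4.9 — namely $\Gamma_{l\bar{s}\bar{m}}\bar{W}^{m}=0$ and $\dfrac{\partial\tilde{\varepsilon}}{\partial\bar{z}^{m}}\bar{W}^{m}=0$ — into the relation (\ref{IV.10}) linking $\overset{h}{\theta^{\ast i}}$ and $\overset{a}{\theta^{\ast i}}$ and check term by term that $\Gamma^{h}_{l\bar{r}\bar{m}}=0$ if and only if $\Gamma^{a}_{l\bar{r}\bar{m}}=0$; but the route through the equality of the two $\theta^{\ast}$'s is considerably cleaner.
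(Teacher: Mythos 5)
Your proof is correct and is exactly the argument the paper intends: the paper gives no written proof of this corollary, merely prefacing it with ``The above theorem allows us to prove the following,'' i.e.\ it relies on Theorem 4.5 to supply $\overset{h}{\theta^{\ast i}}=\overset{a}{\theta^{\ast i}}$ and on the standard fact that a purely Hermitian metric is K\"{a}hler precisely when its $\theta^{\ast}$ vanishes. Your polarization step filling in that last equivalence is sound (a sesquilinear form vanishing on the diagonal vanishes identically over $\mathbb{C}$), so nothing is missing.
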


\begin{corollary}
Let $(M,h)$ be a Hermitian manifold of complex dimension $n=2.$ If the
complex Randers solution $F$ of generalized ZNP is generalized Berwald and $%
h $ and $a$ are projectively related, then $h$ and $a$ are K\"{a}hler.
\end{corollary}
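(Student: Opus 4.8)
The plan is to invoke Corollary 4.14 together with a dimension-specific rigidity argument for $n=2$. First, by Corollary 4.14, since the complex Randers solution $F$ of the generalized ZNP is generalized Berwald and $h$ and $a$ are projectively related, $h$ is K\"ahler if and only if $a$ is K\"ahler; so it suffices to prove that one of them, say $a$, is K\"ahler. The idea is to use the structural conclusions of Theorem 4.11, namely $\Gamma_{l\bar s\bar m}\bar W^m=0$, $\Omega_{\bar m}\bar W^m=\frac{1}{2\tilde\varepsilon}\frac{\partial\tilde\varepsilon}{\partial\bar z^m}\bar\eta^m$ and $\frac{\partial\tilde\varepsilon}{\partial\bar z^m}\bar W^m=0$, together with the fact that $F$ is generalized Berwald, which by Corollary 4.7 means $\overset{a}{G^i}=\overset{h}{G^i}-\frac{1}{2\tilde\varepsilon}\frac{\partial\tilde\varepsilon}{\partial z^j}\eta^j\eta^i$.

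Second, I would exploit that in complex dimension $n=2$ the Chern-Finsler torsion tensor $T^i_{jk}$ of a Hermitian metric has very few independent components: $T^i_{jk}=\overset{a}{L^i_{jk}}-\overset{a}{L^i_{kj}}$ is antisymmetric in $j,k$, so for $n=2$ it is essentially determined by a single $(1,0)$-form (contraction of $\Gamma_{l\bar r\bar m}$ with the antisymmetrizer). The relation $\Gamma_{l\bar s\bar m}\bar W^m=0$ obtained in Theorem 4.11 then becomes a strong constraint: in two dimensions the antisymmetry in the barred indices $\bar r,\bar m$ means that $\Gamma_{l\bar r\bar m}$ is (up to a factor) $\varepsilon_{\bar r\bar m}$ times a $(1,0)$-covector $c_l$, and contracting with $\bar W^m\neq 0$ forces $c_l=0$, hence $\Gamma_{l\bar r\bar m}=0$ identically. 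Since the K\"ahler condition for a Hermitian metric $a$ is exactly $\Gamma_{l\bar r\bar m}:=\frac{\partial a_{l\bar m}}{\partial\bar z^r}-\frac{\partial a_{l\bar r}}{\partial\bar z^m}=0$, this yields that $a$ is K\"ahler, and then by Corollary 4.14 so is $h$.

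Third, I would double-check the nonvanishing hypothesis needed for the contraction argument: $W\neq 0$ must hold because $F$ is of complex Randers type (the case $W=0$ gives the conformal Hermitian solution of Proposition 3.6(iii), not a genuine Randers metric), so $\bar W^m$ is a nonzero vector at each point and the step "$\Gamma_{l\bar r\bar m}\bar W^m=0$ and $\Gamma$ antisymmetric in $n=2$ imply $\Gamma=0$" is legitimate. I would also note that the antisymmetrization trick is special to $n=2$: for $n\geq 3$ the space of $2$-forms in the barred indices is larger and $\Gamma_{l\bar r\bar m}\bar W^m=0$ does not force $\Gamma=0$, which is why the corollary is stated only for $n=2$.

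The main obstacle I anticipate is making the linear-algebra step fully rigorous: one must argue carefully that an expression antisymmetric in two indices ranging over $\{1,2\}$ that annihilates a fixed nonzero vector must vanish, and that the remaining (symmetrized or otherwise) pieces of $\Gamma_{l\bar r\bar m}$ are already controlled by the antisymmetry in $\bar r,\bar m$ built into its definition. Once that bookkeeping is in place — essentially "a $2\times 2$ antisymmetric matrix with a nontrivial kernel is zero" applied fibrewise in the $\bar r,\bar m$ slots for each fixed $l$ — the conclusion $\Gamma_{l\bar r\bar m}=0$, i.e. $a$ K\"ahler, is immediate, and Corollary 4.14 finishes the proof.
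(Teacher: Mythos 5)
Your proposal is correct and follows essentially the same route as the paper: from the projectively-related hypothesis one extracts $\Gamma_{l\bar s\bar m}\bar W^{m}=0$ (the "moreover" part of the theorem on $h$ and $a$ being projectively related), and then the antisymmetry of $\Gamma_{l\bar r\bar m}$ in the barred indices together with $W\neq 0$ (which indeed holds since $F$ is a genuine Randers solution) forces $\Gamma_{l\bar 1\bar 2}=0$ in dimension two, i.e. $a$ is K\"ahler. The paper then deduces that $h$ is K\"ahler from $\overset{h}{\theta^{\ast i}}=\overset{a}{\theta^{\ast i}}=0$, while you invoke the preceding corollary ($h$ K\"ahler iff $a$ K\"ahler); these are interchangeable, so the argument is the same in substance.
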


\begin{proof} Under our assumptions, we have $\Gamma _{l\bar{s}\bar{m}}%
\bar{W}^{m}=0$ which can be rewritten as $\Gamma _{l\bar{s}\bar{1}}\bar{W}%
^{1}+\Gamma _{l\bar{s}\bar{2}}\bar{W}^{2}=0,$ with $l,s=1,2.$ Since $\Gamma
_{l\bar{m}\bar{m}}=0$ and $\Gamma _{l\bar{2}\bar{1}}=-\Gamma _{l\bar{1}\bar{2%
}},$ $l,m=1,2$, the last condition is reduced to $\Gamma _{l\bar{1}\bar{2}}\bar{%
W}^{1}=0$ and $\Gamma _{l\bar{1}\bar{2}}\bar{W}^{2}=0$. These give $\Gamma
_{l\bar{1}\bar{2}}$ because at least one of the coefficients $\bar{W}^{m}$
is nonzero. This means that the metric $a$ is K\"{a}hler. Hence, $\overset%
{h}{\theta ^{\ast i}}=\overset{a}{\theta ^{\ast i}}=0,$ i.e. $h$ is also K%
\"{a}hler.
\end{proof}

\begin{theorem}
Let $(M,h)$ be a Hermitian manifold. If the complex Randers solution $F$ to
generalized ZNP is generalized Berwald and $h $ and $a$ are projectively
related, then $F$ is complex Douglas if and only if $\tilde{\varepsilon}%
=const.$
\end{theorem}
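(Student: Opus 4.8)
The plan is to use Theorem 4.2 to translate ``$F$ is complex Douglas'' into the pair of conditions $A=0$ (generalized Berwald) and $\overset{h\to a}{}\theta^{\ast i}=\overset{a}{\theta^{\ast i}}$ restated \emph{relative to} $h$. Since we already assume $F$ is generalized Berwald, Corollary 4.6 gives the first half of the projective change formula $\overset{a}{G^i}=\overset{h}{G^i}-\frac{1}{2\tilde\varepsilon}\frac{\partial\tilde\varepsilon}{\partial z^j}\eta^j\eta^i$, and by Theorem 4.2 the Douglas property is equivalent to the additional relation $\theta^{\ast i}=\overset{a}{\theta^{\ast i}}$, which via the structure equations $\Omega_{\bar m}=-\frac12\Gamma_{l\bar r\bar m}b^l\bar\eta^r$, $\Gamma_{l\bar r\bar m}b^{\bar m}=0$ translates into the conditions on $\Gamma_{l\bar r\bar m}$ built out of $W$. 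First I would therefore write down, using the projectively-related hypothesis between $h$ and $a$ and the conclusions of Theorem 4.9, that already $\Gamma_{l\bar s\bar m}\bar W^m=0$, $\Omega_{\bar m}\bar W^m=\frac{1}{2\tilde\varepsilon}\frac{\partial\tilde\varepsilon}{\partial\bar z^m}\bar\eta^m$ and $\frac{\partial\tilde\varepsilon}{\partial\bar z^m}\bar W^m=0$ hold.

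Next I would substitute these into formula (\ref{IV.10}) for $\overset{h}{\theta^{\ast i}}$. Since $h$ and $a$ are projectively related we have $\overset{h}{\theta^{\ast i}}=\overset{a}{\theta^{\ast i}}$ (the content of Theorem 4.9), so (\ref{IV.10}) collapses to an identity in which the surviving terms are exactly those involving $\frac{\partial\tilde\varepsilon}{\partial\bar z^m}$; using $\Gamma_{l\bar s\bar m}\bar W^m=0$ and $\frac{\partial\tilde\varepsilon}{\partial\bar z^m}\bar W^m=0$ one is left with $\frac{1}{\tilde\varepsilon}\frac{\partial\tilde\varepsilon}{\partial\bar z^m}\left(h_{0\bar0}h^{\bar m i}-\bar\eta^m\eta^i\right)=0$. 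Contracting with $h_{i\bar0}$ (or equivalently differentiating in $\bar\eta^s$ and contracting with $\bar W^s$ and then with a generic direction) forces $\frac{\partial\tilde\varepsilon}{\partial\bar z^m}\bar\eta^m=0$ for all $\eta$, hence $\frac{\partial\tilde\varepsilon}{\partial\bar z^m}=0$; since $\tilde\varepsilon$ is real valued this is the same as $\frac{\partial\tilde\varepsilon}{\partial z^m}=0$, i.e. $\tilde\varepsilon=const$. Then by Corollary 4.6, (\ref{V}) becomes $\overset{a}{G^i}=\overset{h}{G^i}$, which together with $\Gamma_{l\bar r\bar m}\bar W^m=0$ verifies condition (ii) of Theorem 4.2 relative to $a$, so $F$ is complex Douglas.

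For the converse I would run the argument backwards: assuming $\tilde\varepsilon=const$, formula (\ref{IV}) immediately gives $\overset{a}{G^i}=\overset{h}{G^i}$, so $a$ and $h$ share the same spray and, being both generalized Berwald, $\overset{a}{\theta^{\ast i}}$ and $\overset{h}{\theta^{\ast i}}$ are built from the same data; the projectively-related hypothesis already delivered $\Gamma_{l\bar s\bar m}\bar W^m=0$, hence $\Omega_{\bar m}=-\frac{\tilde\varepsilon}{2f^2}\Gamma_{l\bar r\bar m}W^l\bar\eta^r$ and $\Omega_{\bar m}\bar W^m=0$ (the $\tilde\varepsilon$-derivative terms now vanish), which by formula (\ref{4.4}) yields $\theta^{\ast i}=\overset{a}{\theta^{\ast i}}$; combined with $A=0$ (the generalized Berwald hypothesis), Theorem 4.2(ii) gives that $F$ is complex Douglas.

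\textbf{Main obstacle.} The delicate step is extracting $\frac{\partial\tilde\varepsilon}{\partial\bar z^m}=0$ from the contracted identity: one must be careful that the tensorial identity $\frac{1}{\tilde\varepsilon}\frac{\partial\tilde\varepsilon}{\partial\bar z^m}(h_{0\bar0}h^{\bar m i}-\bar\eta^m\eta^i)=0$ genuinely forces the coefficient to vanish and is not automatically satisfied for algebraic reasons — this requires checking that $h_{0\bar0}h^{\bar m i}-\bar\eta^m\eta^i$ is not identically proportional to something that annihilates an arbitrary $(0,1)$-covector after the relevant contractions, i.e. that the span of these tensors as $\eta$ varies is large enough. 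The bookkeeping of which terms in (\ref{IV.10}) survive once the three consequences of Theorem 4.9 are inserted is the other place where care is needed, but it is routine substitution rather than a real difficulty.
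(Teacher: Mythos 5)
Your forward direction contains a genuine gap: you never actually use the hypothesis that $F$ is complex Douglas, and the simplification of (\ref{IV.10}) you rely on is incorrect. After imposing $\overset{h}{\theta ^{\ast i}}=\overset{a}{\theta ^{\ast i}}$ and $\Gamma _{0\bar{0}\bar{m}}\bar{W}^{m}=0$, what survives of (\ref{IV.10}) is
\begin{equation*}
\frac{1}{\tilde{\varepsilon}}\frac{\partial \tilde{\varepsilon}}{\partial \bar{z}^{m}}\left( h_{0\bar{0}}h^{\bar{m}i}-\bar{\eta}^{m}\eta ^{i}\right) -W_{0}\left( \frac{\tilde{\varepsilon}}{f^{2}}\Gamma _{l\bar{0}\bar{m}}W^{l}+2\Omega _{\bar{m}}\right) h^{\bar{m}i}=0
\end{equation*}
(up to overall sign conventions), and the second group of terms is \emph{not} killed by $\Gamma _{l\bar{s}\bar{m}}\bar{W}^{m}=0$ and $\frac{\partial \tilde{\varepsilon}}{\partial \bar{z}^{m}}\bar{W}^{m}=0$: in $\Gamma _{l\bar{0}\bar{m}}W^{l}$ the contraction with $W$ is on the \emph{first} index while the free index is $\bar{m}$, and the projectively-related hypothesis only controls the contraction $\Omega _{\bar{m}}\bar{W}^{m}$, not the covector $\Omega _{\bar{m}}$ itself. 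Indeed, the ``moreover'' part of the theorem on $h$ and $a$ being projectively related gives $\Omega _{\bar{m}}\bar{W}^{m}=\frac{1}{2\tilde{\varepsilon}}\frac{\partial \tilde{\varepsilon}}{\partial \bar{z}^{m}}\bar{\eta}^{m}$, which is perfectly compatible with $\tilde{\varepsilon}$ non-constant. If your collapse of (\ref{IV.10}) were legitimate, the standing hypotheses alone would force both $\tilde{\varepsilon}=const.$ and the Douglas property, making the stated equivalence vacuous.

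The repair is precisely where the Douglas hypothesis must enter, and it is how the paper argues: for a complex Douglas Randers metric one has $\Omega _{\bar{m}}b^{\bar{m}}=0$, i.e. $\Omega _{\bar{m}}\bar{W}^{m}=0$ since $b^{\bar{m}}$ is proportional to $\bar{W}^{m}$; combined with $\Omega _{\bar{m}}\bar{W}^{m}=\frac{1}{2\tilde{\varepsilon}}\frac{\partial \tilde{\varepsilon}}{\partial \bar{z}^{m}}\bar{\eta}^{m}$ this yields $\frac{\partial \tilde{\varepsilon}}{\partial \bar{z}^{m}}\bar{\eta}^{m}=0$ for all $\eta$, hence $\tilde{\varepsilon}=const.$, a two-line argument that needs none of the contraction gymnastics you worry about in your ``main obstacle'' (which is not where the difficulty lies). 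Your converse is sound in outline, but two steps are imprecise: $\overset{a}{G^{i}}=\overset{h}{G^{i}}$ follows from (\ref{V}) (the generalized Berwald reduction) together with $\tilde{\varepsilon}=const.$, not from (\ref{IV}) ``immediately''; and the identity $\Omega _{\bar{m}}=-\frac{\tilde{\varepsilon}}{2f^{2}}\Gamma _{l\bar{r}\bar{m}}W^{l}\bar{\eta}^{r}$ must be extracted from (\ref{IV.10}) using $\tilde{\varepsilon}=const.$ and $\overset{h}{\theta ^{\ast i}}=\overset{a}{\theta ^{\ast i}}$, rather than deduced from $\Gamma _{l\bar{s}\bar{m}}\bar{W}^{m}=0$ alone.
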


\begin{proof} If $F$ is complex Douglas then $\Omega _{\bar{m}}\bar{W}%
^{m}=0$ and by Theorem 4.5, it results $\frac{\partial \tilde{\varepsilon}}{%
\partial \bar{z}^{m}}\bar{\eta}^{m}=0$ and so $\frac{\partial \tilde{%
\varepsilon}}{\partial \bar{z}^{m}}=0,$ i.e. $\tilde{\varepsilon}=const.$
Conversely, if $\tilde{\varepsilon}=const.$ our assumptions and (%
\ref{IV.10}) lead to $\Omega _{\bar{m}}=-\frac{\tilde{\varepsilon}}{2f^{2}}%
\Gamma _{l\bar{r}\bar{m}}W^{l}\bar{\eta}^{r}.$ Also, by Theorem 4.5 we have $%
\Gamma _{l\bar{r}\bar{m}}W^{\bar{m}}=0$ and $\Omega _{\bar{m}}W^{\bar{m}}=0$
which imply $\theta ^{\ast i}=\overset{a}{\theta ^{\ast i}}$, namely $F$ is
Douglas.
\end{proof}

\begin{theorem}
Let $(M,h)$ be a Hermitian manifold of complex dimension $n\geq 2$. If $a$
and the complex Randers solution $F$ of generalized ZNP are projectively
related, then $h$ and $F$ are projectively related if and only if $\tilde{%
\varepsilon}=const.$ Moreover, any of these assertions implies $\overset{h}{%
G^{i}}=G^{i}$, $\overset{h}{\theta ^{\ast i}}=\theta ^{\ast i}$ and%
\begin{equation}
\mathcal{K}_{F}(z,\eta )=\frac{1}{F^{2}\sqrt{W_{0}W_{\bar{0}}}}\left[\tilde{%
\varepsilon}\alpha ^{3}\mathcal{K}_{a}(z,\eta )-\frac{h^{4}}{F}\mathcal{K}%
_{h}(z,\eta )\right].  \label{V.1}
\end{equation}
\end{theorem}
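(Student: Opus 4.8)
The plan is to chain together the already-established projective relations. By hypothesis $a$ and $F$ are projectively related; the new content is the equivalence with $h$ and $F$ being projectively related, controlled by the condition $\tilde{\varepsilon}=const.$ First I would invoke Theorem 4.3: since $a$ and $F$ are projectively related, $(M,F)$ is a complex Douglas space, hence $A=0$ and $\theta^{\ast i}=\overset{a}{\theta^{\ast i}}$; in particular $F$ is generalized Berwald, so formula \eqref{V} from Corollary 4.6 applies, namely $\overset{a}{G^{i}}=\overset{h}{G^{i}}-\frac{1}{2\tilde{\varepsilon}}\frac{\partial\tilde{\varepsilon}}{\partial z^{j}}\eta^{j}\eta^{i}$. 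Combining this with the projective change $\tilde{G}^{i}=G^{i}+\frac{1}{\tilde F}(\delta_{k}\tilde F)\eta^{k}\eta^{i}$ relating $a$ and $F$ (from Corollary 2.1, applied with $\tilde F=F$ and the generalized Berwald metric $a$), we see that $h$ and $F$ are projectively related precisely when the accumulated difference $\overset{h}{G^{i}}-G^{i}$ is of the form $(\text{scalar})\,\eta^{i}$ with the matching compatibility conditions of Corollary 2.1 — and the obstruction to this is exactly the term $\frac{1}{2\tilde{\varepsilon}}\frac{\partial\tilde{\varepsilon}}{\partial z^{j}}\eta^{j}\eta^{i}$ together with the $\overset{h}{\theta^{\ast i}}$ versus $\theta^{\ast i}$ discrepancy recorded in \eqref{IV.10}.

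For the forward direction, if $h$ and $F$ are projectively related, then since $a$ and $F$ are as well, transitivity of the projective relation gives that $h$ and $a$ are projectively related; now Theorem 4.5 is directly applicable (its hypotheses — $F$ generalized Berwald, $h$ and $a$ projectively related — are met), yielding $\Gamma_{l\bar s\bar m}\bar W^{m}=0$, $\frac{\partial\tilde{\varepsilon}}{\partial\bar z^{m}}\bar W^{m}=0$, and $\overset{h}{\theta^{\ast i}}=\overset{a}{\theta^{\ast i}}$. I would then feed these back into \eqref{IV.10}: with $\Gamma_{0\bar 0\bar m}\bar W^{m}=0$ and $\overset{h}{\theta^{\ast i}}=\overset{a}{\theta^{\ast i}}$ the equation collapses to $\frac{1}{\tilde{\varepsilon}}\frac{\partial\tilde{\varepsilon}}{\partial\bar z^{m}}\big(h_{0\bar 0}h^{\bar m i}-\bar\eta^{m}\eta^{i}\big)=0$ (using also $\Omega_{\bar m}\bar W^{m}=\frac{1}{2\tilde{\varepsilon}}\frac{\partial\tilde{\varepsilon}}{\partial\bar z^{m}}\bar\eta^{m}$ to cancel the $W_{0}(\cdots)h^{\bar m i}$ term against the surviving pieces); contracting with $h_{i\bar r}$ and using nondegeneracy of $h$ forces $\frac{\partial\tilde{\varepsilon}}{\partial\bar z^{m}}=0$, hence $\tilde{\varepsilon}=const.$ (It is real valued, so vanishing of all $\bar z$-derivatives suffices.) Conversely, if $\tilde{\varepsilon}=const.$, then $\frac{\partial\tilde{\varepsilon}}{\partial z^{j}}=0$, so \eqref{V} becomes $\overset{a}{G^{i}}=\overset{h}{G^{i}}$, i.e. $h$ and $a$ have the same spray, and \eqref{IV.10} reduces to $\overset{h}{\theta^{\ast i}}=\overset{a}{\theta^{\ast i}}-\frac{\tilde{\varepsilon}}{f^{2}}\Gamma_{0\bar 0\bar m}\bar W^{m}W^{i}+W_{0}\big(\frac{\tilde{\varepsilon}}{f^{2}}\Gamma_{l\bar 0\bar m}W^{l}+2\Omega_{\bar m}\big)h^{\bar m i}$; since $a$ and $F$ projectively related implies (Theorem 4.3) $\theta^{\ast i}=\overset{a}{\theta^{\ast i}}$ and (the remark after Theorem 4.3) $\Omega_{\bar m}=-\tfrac12\Gamma_{l\bar r\bar m}b^{l}\bar\eta^{r}$, $\Gamma_{l\bar r\bar m}b^{\bar m}=0$, I would translate these into the $W$-language via $b^{i}=(1-\|b\|^{2})W^{i}$ from \eqref{III'} to get $\Gamma_{l\bar r\bar m}\bar W^{m}=0$ and $\Omega_{\bar m}\bar W^{m}=0$, which kills the extra terms and gives $\overset{h}{\theta^{\ast i}}=\overset{a}{\theta^{\ast i}}=\theta^{\ast i}$ and $\overset{h}{G^{i}}=\overset{a}{G^{i}}=G^{i}$; hence $h$ and $F$ are projectively related (trivially, same sprays and same $\theta^{\ast}$).

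Finally, the curvature formula \eqref{V.1} follows from \eqref{III.0'} by setting $\tilde{\varepsilon}=const.$: the term $\frac{2h^{2}}{\tilde{\varepsilon}F}\big(\frac{1}{\tilde{\varepsilon}}\frac{\partial\tilde{\varepsilon}}{\partial\bar z^{m}}\frac{\partial\tilde{\varepsilon}}{\partial z^{j}}-\frac{\partial^{2}\tilde{\varepsilon}}{\partial\bar z^{m}\partial z^{j}}\big)\eta^{j}\bar\eta^{m}$ vanishes identically, leaving exactly $\mathcal{K}_{F}=\frac{1}{F^{2}\sqrt{W_{0}W_{\bar 0}}}\big[\tilde{\varepsilon}\alpha^{3}\mathcal{K}_{a}-\frac{h^{4}}{F}\mathcal{K}_{h}\big]$.

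The main obstacle I anticipate is bookkeeping in \eqref{IV.10}: correctly collecting the $W^{i}$- and $h^{\bar m i}$-coefficients and recognizing which combinations are forced to vanish by the Douglas/Theorem 4.5 conditions — in particular making sure the forward direction's contraction argument genuinely isolates $\frac{\partial\tilde{\varepsilon}}{\partial\bar z^{m}}$ rather than merely its contraction $\frac{\partial\tilde{\varepsilon}}{\partial\bar z^{m}}\bar\eta^{m}$, for which one differentiates in $\bar\eta$ before concluding. The transitivity step and the curvature specialization are routine.
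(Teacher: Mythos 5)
Your argument is correct in substance and rests on the same pillars as the paper's proof: the Douglas property of $F$ inherited from the hypothesis that $a$ and $F$ are projectively related (Theorem 4.1), the $h$--$a$ relations (\ref{V}), (\ref{IV.10}), (\ref{IV.11}), the identity $\frac{\partial \tilde{\varepsilon}}{\partial \bar{z}^{m}}\left(h_{0\bar{0}}h^{\bar{m}i}-\bar{\eta}^{m}\eta ^{i}\right)=0$ forcing $\tilde{\varepsilon}=const.$ when $n\geq 2$, and the specialization of (\ref{III.0'}) for the curvature formula. The difference is in routing the forward direction: the paper goes straight from ``$F$ Douglas'' to the second formula of (\ref{IV.11}) and reads off the identity above, whereas you pass through transitivity of projective relatedness, Theorem 4.5 and (\ref{IV.10}); this is valid but re-derives what Corollary 4.4 already packages, and indeed once you have $h$ and $a$ projectively related together with $F$ Douglas, Theorem 4.8 would hand you $\tilde{\varepsilon}=const.$ outright, shortcutting the contraction analysis entirely. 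Two points to tighten. First, in collapsing (\ref{IV.10}) the term $W_{0}\left(\frac{\tilde{\varepsilon}}{f^{2}}\Gamma _{l\bar{0}\bar{m}}W^{l}+2\Omega _{\bar{m}}\right)h^{\bar{m}i}$ is killed not by the contracted identity $\Omega _{\bar{m}}\bar{W}^{m}=\frac{1}{2\tilde{\varepsilon}}\frac{\partial \tilde{\varepsilon}}{\partial \bar{z}^{m}}\bar{\eta}^{m}$ but by the full Douglas relation $\Omega _{\bar{m}}=-\frac{1}{2}\Gamma _{l\bar{r}\bar{m}}b^{l}\bar{\eta}^{r}$ translated via $b^{l}=\frac{\tilde{\varepsilon}}{f^{2}}W^{l}$, which makes the bracket vanish identically. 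Second, contracting the key identity with $h_{i\bar{r}}$ alone yields only $h_{0\bar{0}}\frac{\partial \tilde{\varepsilon}}{\partial \bar{z}^{r}}=\left(\frac{\partial \tilde{\varepsilon}}{\partial \bar{z}^{m}}\bar{\eta}^{m}\right)h_{0\bar{r}}$, which is vacuous for $n=1$ and not yet conclusive otherwise; as you anticipate, one must first differentiate in $\eta ^{k}$ and $\bar{\eta}^{s}$, and the paper's subsequent trace produces the factor $(n-1)$ that makes the hypothesis $n\geq 2$ do its work. The converse and the curvature formula coincide with the paper's argument.
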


\begin{proof}
We suppose that $h$ and $F$ are projectively related. Since $%
F$ is complex Douglas, Corollary 2.1 and the second formula in (\ref{IV.11}) yield $%
\frac{\partial \tilde{\varepsilon}}{\partial \bar{z}^{m}}\left( h_{0\bar{0}%
}h^{\bar{m}i}-\bar{\eta}^{m}\eta ^{i}\right) =0$. Two successive
differentiations with respect to $\eta ^{k}$ and $\bar{\eta}^{s}$ give $%
\frac{\partial \tilde{\varepsilon}}{\partial \bar{z}^{m}}\left( h_{k\bar{s}%
}h^{\bar{m}i}-\delta _{\bar{s}}^{\bar{m}}\delta _{k}^{i}\right) =0.$ Also, the
contraction with $h^{\bar{s}l}$ leads to $\frac{\partial \tilde{\varepsilon}%
}{\partial \bar{z}^{m}}\left( \delta _{k}^{l}h^{\bar{m}i}-h^{\bar{m}l}\delta
_{k}^{i}\right) =0$ and setting $l=k$ results $(n-1)h^{\bar{m}i}\frac{%
\partial \tilde{\varepsilon}}{\partial \bar{z}^{m}}=0.$ Thus, we have $(n-1)%
\frac{\partial \tilde{\varepsilon}}{\partial \bar{z}^{r}}=0$ what gives that
$\tilde{\varepsilon}=const.,$ so $\overset{h}{G^{i}}=G^{i}$ and $\overset{h}{%
\theta ^{\ast i}}=\theta ^{\ast i}.$ Conversely, if $\tilde{\varepsilon}%
=const.,$ under complex Douglas assumption for $F,$ by (\ref{IV.11}) we get $\overset{h}{G^{i}}=G^{i}$and $\overset{h}{\theta ^{\ast i}}=\theta
^{\ast i}$. Thus, by Corollary 2.1, $h$ and $F$ are projectively related.
Also, since $\tilde{\varepsilon}=const.,$ the formula (\ref{III.0'}) is
reduced to (\ref{V.1}).
\end{proof}
\begin{remark}
\bigskip Under the assumptions of Theorem 4.9, by the formula (\ref{V.1}),
we summarize: i) If $\mathcal{K}_{h}(z,\eta )=0$ then $\mathcal{K}%
_{F}(z,\eta )=\frac{\tilde{\varepsilon}\alpha ^{3}}{F^{2}\sqrt{W_{0}W_{\bar{0%
}}}}\mathcal{K}_{a}(z,\eta );$ ii) If $\mathcal{K}_{h}(z,\eta )>0$, then $%
\mathcal{K}_{F}(z,\eta )<\frac{\tilde{\varepsilon}\alpha ^{3}}{F^{2}\sqrt{%
W_{0}W_{\bar{0}}}}\mathcal{K}_{a}(z,\eta )$ and iii) If $\mathcal{K}%
_{h}(z,\eta )<0$, then $\mathcal{K}_{F}(z,\eta )>\frac{\tilde{\varepsilon}%
\alpha ^{3}}{F^{2}\sqrt{W_{0}W_{\bar{0}}}}\mathcal{K}_{a}(z,\eta ).$
\end{remark}

\noindent Due to Theorem 4.8 and Theorem 4.9 we thus get

\begin{corollary}
Let $(M,h)$ be a Hermitian manifold of complex dimension $n\geq 2$. If $a$
and the complex Randers solution $F$ of generalized ZNP are projectively
related as well as $h$ and $a$ are projectively related, then $h$ and $F$ are
projectively related and $\tilde{\varepsilon}=const.$
\end{corollary}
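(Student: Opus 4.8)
The plan is to obtain the corollary as a short chain of the results already established in the excerpt, with essentially no fresh computation. First I would exploit the hypothesis that $a$ and the complex Randers solution $F$ are projectively related: by Theorem 4.3 (the equivalence of assertions (i) and (iii)) this says precisely that $(M,F)$ is a complex Douglas space. Since the Douglas condition for a connected complex Randers space in particular forces $A=0$, and $A=0$ is exactly the criterion for a connected complex Randers space to be generalized Berwald (the characterization recalled right after formula (\ref{3.0})), it follows that the complex Randers solution $F$ of generalized ZNP is generalized Berwald.

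With this in hand, the hypotheses of Theorem 4.8 are all met: $F$ is the complex Randers solution of the generalized ZNP, it is generalized Berwald, and $h$ and $a$ are projectively related by assumption. Since we have just noted that $F$ is complex Douglas, the ``only if'' direction of Theorem 4.8 yields $\tilde{\varepsilon}=const.$, which is one of the two claimed conclusions.

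Finally I would invoke Theorem 4.9. Its standing hypotheses hold in our setting, namely $n\geq 2$ and $a$ and $F$ projectively related, so the equivalence stated there is available; applying the implication ``$\tilde{\varepsilon}=const.$ $\Rightarrow$ $h$ and $F$ are projectively related'' gives the remaining assertion of the corollary (and, from the ``moreover'' part of Theorem 4.9, one also records $\overset{h}{G^{i}}=G^{i}$ and $\overset{h}{\theta^{\ast i}}=\theta^{\ast i}$). I do not expect a genuine obstacle here; the only step requiring a little care is the first one, recognizing that projective relatedness of the pair $(a,F)$ upgrades automatically to ``complex Douglas'' and hence to ``generalized Berwald'', since that is exactly the hypothesis needed to switch on Theorem 4.8 and then Theorem 4.9.
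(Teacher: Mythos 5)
Your argument is correct and is essentially the paper's own proof, which simply says ``Due to Theorem 4.8 and Theorem 4.9 we thus get''; you have usefully made explicit the intermediate step that projective relatedness of $a$ and $F$ forces $F$ to be complex Douglas, hence $A=0$ and generalized Berwald, which is exactly what is needed to invoke Theorem 4.8 before closing with Theorem 4.9. The only slip is a label: the Douglas equivalence (assertions (i) and (iii)) is Theorem 4.1 in the paper, not Theorem 4.3.
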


\begin{corollary}
Let $F$ be the complex Randers solution of generalized ZNP on domain $D$
from $\mathbb{C}^{n}.$ If $F$ is generalized Berwald, then $h$ is K\"{a}hler
and $a$ is locally projectively flat if and only if $a$ is K\"{a}hler and $h$
is locally projectively flat. Moreover, any of these assertions implies that $a$
and $h$ are projectively related, $\mathcal{K}_{h}=c_{1},$ $\mathcal{K}%
_{a}=c_{2},$ $c_{1},c_{2}$ $\in \mathbb{R}$, and%
\begin{equation}
c_{1}=\frac{n\tilde{\varepsilon}+||W||_{h}^{2}}{n\tilde{\varepsilon}^{2}}%
c_{2}-\frac{2}{\tilde{\varepsilon}}h^{\bar{m}j}\left(\frac{\partial \tilde{%
\varepsilon}}{\partial \bar{z}^{m}}\frac{\partial \tilde{\varepsilon}}{%
\partial z^{j}}-\frac{\partial ^{2}\tilde{\varepsilon}}{\partial \bar{z}%
^{m}\partial z^{j}}\right).  \label{IX}
\end{equation}
\end{corollary}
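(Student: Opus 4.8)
The plan is to convert local projective flatness into a condition on spray coefficients, so that both implications become symmetric. I would first set up two preliminary facts. (a) On a domain of $\mathbb{C}^{n}$ a purely Hermitian metric is complex Berwald if and only if it is K\"{a}hler: any Hermitian metric is generalized Berwald and, for purely Hermitian metrics, the notions of weakly K\"{a}hler and K\"{a}hler coincide, so the characterization ``complex Berwald $=$ generalized Berwald $+$ weakly K\"{a}hler'' reduces to K\"{a}hlerity; in particular, by Corollary 2.3, a locally projectively flat purely Hermitian metric is K\"{a}hler. (b) If a purely Hermitian metric $g$ on a domain has spray coefficients of the form $\overset{g}{G^{i}}=\mu\,\eta^{i}$ for some function $\mu$ on $T'M$, then, contracting with $g_{i\bar{l}}\bar{\eta}^{l}$ and using $\overset{g}{G^{i}}=\frac{1}{2}g^{\bar{m}i}\frac{\partial g_{r\bar{m}}}{\partial z^{j}}\eta^{r}\eta^{j}$, one finds $\mu=\frac{1}{g}\frac{\partial g}{\partial z^{k}}\eta^{k}$ with $g:=\sqrt{g_{i\bar{j}}\eta^{i}\bar{\eta}^{j}}$; hence, by Theorem 2.4 together with (a), a K\"{a}hler metric on a domain of $\mathbb{C}^{n}$ is locally projectively flat if and only if its spray coefficients are proportional to $\eta^{i}$.

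Next I would observe that, by (a), each of the two hypotheses of the corollary forces both $h$ and $a$ to be K\"{a}hler. Then $h$ and $a$ are weakly K\"{a}hler, so $\overset{h}{\theta^{\ast i}}=\overset{a}{\theta^{\ast i}}=0$, and, $F$ being generalized Berwald, Theorem 4.5 yields that $h$ and $a$ are projectively related --- which already establishes the first clause of the ``moreover'' part. It then remains to prove, under the additional assumption that $h$ and $a$ are K\"{a}hler, that $a$ is locally projectively flat if and only if $h$ is. For this, note that the generalized Berwald hypothesis on $F$ gives, via~(\ref{V}), the identity $\overset{h}{G^{i}}=\overset{a}{G^{i}}+\frac{1}{2\tilde{\varepsilon}}\frac{\partial\tilde{\varepsilon}}{\partial z^{j}}\eta^{j}\eta^{i}$. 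If $a$ is locally projectively flat, Theorem 2.4 gives $\overset{a}{G^{i}}=\frac{1}{\alpha}\frac{\partial\alpha}{\partial z^{k}}\eta^{k}\eta^{i}$ with $\alpha:=\sqrt{a_{i\bar{j}}\eta^{i}\bar{\eta}^{j}}$; hence $\overset{h}{G^{i}}$ is proportional to $\eta^{i}$ and, by (b) and $h$ K\"{a}hler, $h$ is locally projectively flat. Interchanging the roles of $h$ and $a$ (so that $\overset{a}{G^{i}}=\overset{h}{G^{i}}-\frac{1}{2\tilde{\varepsilon}}\frac{\partial\tilde{\varepsilon}}{\partial z^{j}}\eta^{j}\eta^{i}$ becomes proportional to $\eta^{i}$ once $h$ is locally projectively flat) gives the converse, and the equivalence is proved.

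For the remaining assertion of ``moreover'': once the equivalence is established, under either hypothesis both $a$ and $h$ are locally projectively flat, so (for $n\ge 2$) Corollary 2.3 gives $\mathcal{K}_{a}=c_{2}$ and $\mathcal{K}_{h}=c_{1}$ with $c_{1},c_{2}\in\mathbb{R}$. To obtain~(\ref{IX}) I would substitute $\overset{a}{G^{l}}=\psi\,\eta^{l}$, $\psi:=\frac{1}{\alpha}\frac{\partial\alpha}{\partial z^{k}}\eta^{k}$, into the curvature relation~(\ref{III.0}) (which holds since $F$ is generalized Berwald): formula~(\ref{II.2}) gives $\frac{\partial\psi}{\partial\bar{z}^{h}}\bar{\eta}^{h}=-\frac{\alpha^{2}}{4}\mathcal{K}_{a}=-\frac{\alpha^{2}c_{2}}{4}$, whence $W_{\bar{0}}W_{l}\frac{\partial\overset{a}{G^{l}}}{\partial\bar{z}^{m}}\bar{\eta}^{m}=-\frac{\alpha^{2}c_{2}}{4}W_{0}W_{\bar{0}}$; combining this with $\tilde{\varepsilon}\alpha^{2}-W_{0}W_{\bar{0}}/\tilde{\varepsilon}=h_{i\bar{j}}\eta^{i}\bar{\eta}^{j}$, which is immediate from $a_{i\bar{j}}=h_{i\bar{j}}/\tilde{\varepsilon}+W_{i}\bar{W}_{j}/\tilde{\varepsilon}^{2}$ in~(\ref{III'}), relation~(\ref{III.0}) collapses to $c_{1}h_{i\bar{j}}\eta^{i}\bar{\eta}^{j}=c_{2}a_{i\bar{j}}\eta^{i}\bar{\eta}^{j}-\frac{2}{\tilde{\varepsilon}}\big(\frac{\partial\tilde{\varepsilon}}{\partial\bar{z}^{m}}\frac{\partial\tilde{\varepsilon}}{\partial z^{j}}-\frac{\partial^{2}\tilde{\varepsilon}}{\partial\bar{z}^{m}\partial z^{j}}\big)\eta^{j}\bar{\eta}^{m}$. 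Since both sides are Hermitian quadratic forms in $\eta$, this passes to an identity of the corresponding Hermitian tensors; substituting the expression for $a_{i\bar{j}}$ and contracting with $h^{\bar{m}j}$ (using $h^{\bar{m}j}W_{j}\bar{W}_{m}=||W||_{h}^{2}$) then solves for $c_{1}$ and produces~(\ref{IX}).

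The conceptual content is short; the points that require care are, first, the reduction in (b) --- this is precisely what makes both directions of the equivalence fall out symmetrically from~(\ref{V}) and Theorem 2.4, and it is where the hypothesis that $D$ is a domain in $\mathbb{C}^{n}$ is essential --- and, second, the index bookkeeping when collapsing~(\ref{III.0}): one must verify the cancellation $\tilde{\varepsilon}\alpha^{2}-W_{0}W_{\bar{0}}/\tilde{\varepsilon}=h_{i\bar{j}}\eta^{i}\bar{\eta}^{j}$, check that the $\eta$-dependence of~(\ref{III.0}) disappears as soon as $\mathcal{K}_{a}$ and $\mathcal{K}_{h}$ are constant, and carry the trace factor correctly into~(\ref{IX}).
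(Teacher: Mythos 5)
Your proposal is correct and its overall skeleton matches the paper's: both use the generalized Berwald identity~(\ref{V}) to transfer the spray condition between $h$ and $a$, Theorem 4.5 plus $\overset{h}{\theta^{\ast i}}=\overset{a}{\theta^{\ast i}}=0$ (both metrics being K\"{a}hler) for the projective relatedness, Corollary 2.3/Theorem 2.2 for the constancy of the curvatures, and the curvature relation~(\ref{III.0}) for~(\ref{IX}). Where you genuinely diverge is in the middle step. The paper proves each direction of the equivalence by a separate explicit computation: in the forward direction it expands $\frac{1}{h}\frac{\partial h}{\partial z^{k}}\eta^{k}\eta^{i}$ using $h^{2}=\tilde{\varepsilon}(\alpha^{2}-|\beta|^{2})$ and the generalized Berwald condition $(\overset{a}{\delta_{k}}|\beta|^{2})\eta^{k}=0$ to identify it with $\overset{h}{G^{i}}$; in the converse it derives formula~(\ref{22}) and contracts with $b_{i}$ to recover $\overset{a}{G^{i}}=\frac{1}{a}\frac{\partial a}{\partial z^{k}}\eta^{k}\eta^{i}$. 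Your lemma (b) --- that for a purely Hermitian metric the condition $G^{i}=\mu\eta^{i}$ automatically forces $\mu=\frac{1}{g}\frac{\partial g}{\partial z^{k}}\eta^{k}$, so that, for a K\"{a}hler metric on a domain, local projective flatness is equivalent to the spray being proportional to $\eta^{i}$ --- replaces both computations at once and makes the two directions fall out symmetrically from~(\ref{V}); this is a cleaner and slightly more general argument. You also derive the intermediate tensor identity $c_{1}h_{j\bar{m}}=c_{2}a_{j\bar{m}}-\frac{2}{\tilde{\varepsilon}}\bigl(\frac{\partial\tilde{\varepsilon}}{\partial\bar{z}^{m}}\frac{\partial\tilde{\varepsilon}}{\partial z^{j}}-\frac{\partial^{2}\tilde{\varepsilon}}{\partial\bar{z}^{m}\partial z^{j}}\bigr)$ explicitly from~(\ref{III.0}) via $\tilde{\varepsilon}\alpha^{2}-W_{0}W_{\bar{0}}/\tilde{\varepsilon}=h_{0\bar{0}}$, which the paper merely asserts; your computation is correct, and note that the final contraction with $h^{\bar{m}j}$ actually yields a factor $\frac{2}{n\tilde{\varepsilon}}$ on the last term (the printed~(\ref{IX}) appears to drop the $n$ there), so your caution about ``carrying the trace factor correctly'' is well placed. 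The only implicit restriction, shared with the paper, is $n\geq 2$ for the constancy of the holomorphic curvatures.
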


\begin{proof} If $a$ is locally projectively flat, then $\overset{a}{G^{i}}=\frac{1%
}{a}\frac{\partial a}{\partial z^{k}}\eta ^{k}\eta ^{i}$ and $a$ is K\"{a}%
hler. Thus,

\noindent $\frac{1}{h}\frac{\partial h}{\partial z^{k}}\eta ^{k}\eta ^{i}=\frac{1}{2%
\tilde{\varepsilon}}\frac{\partial \tilde{\varepsilon}}{\partial z^{j}}\eta
^{j}\eta ^{i}+\frac{\tilde{\varepsilon}}{2h^{2}}(\frac{\partial \alpha ^{2}}{%
\partial z^{k}}-\frac{\partial |\beta |^{2}}{\partial z^{k}})\eta ^{k}\eta
^{i}$
$=\frac{1}{2\tilde{\varepsilon}}\frac{\partial \tilde{\varepsilon}}{\partial
z^{j}}\eta ^{j}\eta ^{i}+\frac{\tilde{\varepsilon}\alpha ^{2}}{h^{2}}\overset%
{a}{G^{i}}-\frac{\tilde{\varepsilon}}{2h^{2}}[(\overset{a}{\delta _{k}}%
|\beta |^{2})\eta ^{k}+2\bar{\beta}\overset{a}{G^{l}}b_{l}]\eta ^{i}$

\noindent $=\frac{1}{2\tilde{\varepsilon}}\frac{\partial \tilde{\varepsilon}}{\partial
z^{j}}\eta ^{j}\eta ^{i}+\frac{\tilde{\varepsilon}\alpha ^{2}}{h^{2}}\overset%
{a}{G^{i}}-\frac{\tilde{\varepsilon}}{h^{2}}\bar{\beta}\frac{1}{a}\frac{%
\partial a}{\partial z^{k}}\eta ^{k}\eta ^{l}b_{l}\eta ^{i}=\frac{1}{2\tilde{%
\varepsilon}}\frac{\partial \tilde{\varepsilon}}{\partial z^{j}}\eta
^{j}\eta ^{i}+\frac{\tilde{\varepsilon}\alpha ^{2}}{h^{2}}\overset{a}{G^{i}}-%
\frac{\tilde{\varepsilon}|\beta |^{2}}{h^{2}}\overset{a}{G^{i}}$
$=\frac{1}{2\tilde{\varepsilon}}\frac{\partial \tilde{\varepsilon}}{\partial
z^{j}}\eta ^{j}\eta ^{i}+\overset{a}{G^{i}}.$ Then, by relation (\ref{V})
it results that $\overset{h}{G^{i}}=\frac{1}{h}\frac{\partial h}{\partial z^{k}}%
\eta ^{k}\eta ^{i},$ which together with the K\"{a}hler assumption for $h,$
give that $h$ is locally projectively flat.
Conversely, if $h$ is locally projectively then it is K\"{a}hler and $%
\overset{h}{G^{i}}=\frac{1}{h}\frac{\partial h}{\partial z^{k}}\eta ^{k}\eta
^{i}$. Now, owing again to (\ref{V}) we obtain%
\begin{equation}
\overset{a}{G^{i}}=\frac{\tilde{\varepsilon}}{h^{2}}\left(\alpha \frac{\partial a%
}{\partial z^{k}}\eta ^{k}-\bar{\beta}\overset{a}{G^{l}}b_{l}\right)\eta ^{i}.
\label{22}
\end{equation}%
The contraction of (\ref{22}) with $b_{i}$ gives $\overset{a}{G^{i}}b_{i}=%
\frac{\beta }{\alpha }\frac{\partial a}{\partial z^{k}}\eta ^{k},$ which
substituted in (\ref{22}) yields $\overset{a}{G^{i}}=\frac{1}{a}\frac{%
\partial a}{\partial z^{k}}\eta ^{k}\eta ^{i}$, and with assumption that $a$
is K\"{a}hler, we thus get the local projective flatness for $a$.
Therefore, $\overset{h}{\theta ^{\ast i}}=\overset{a}{\theta ^{\ast i}}=0$
and according to Theorem 4.9, $a$ and $h$ are projectively related. Also,
according to Corrolary 4.3, we have $\overset{h}{W_{j\bar{k}h}^{i}}=\overset{%
a}{W_{j\bar{k}h}^{i}}=0,$ and so, by Theorem 4.2, it results that $\mathcal{K}%
_{h}=c_{1},$ $\mathcal{K}_{a}=c_{2}$ with $c_{1},c_{2}$ $\in \mathbb{R},$
and
\begin{equation*}
c_{1}h_{j\bar{m}}=c_{2}a_{j\bar{m}}-\frac{2}{\tilde{\varepsilon}}\left(\frac{%
\partial \tilde{\varepsilon}}{\partial \bar{z}^{m}}\frac{\partial \tilde{%
\varepsilon}}{\partial z^{j}}-\frac{\partial ^{2}\tilde{\varepsilon}}{%
\partial \bar{z}^{m}\partial z^{j}}\right)
\end{equation*}%
which multiplied by $h^{\bar{m}j}$ leads to (\ref{IX}).
\end{proof}

Owing to Theorem 4.9, Proposition 4.2 and Corollary 4.12 we have the
following result

\begin{theorem}
Let $F$ be a complex Randers solution of generalized ZNP on domain $D$
from $\mathbb{C}^{n}.$ Then, $h$ is K\"{a}hler and $F$ is locally
projectively flat if and only if $h$ is locally projectively flat and $F$ is
complex Berwald. Moreover, any of these assertions implies $\tilde{%
\varepsilon}=const.,$ $h,$ $a$ and $F$ are projectively related and $%
\mathcal{K}_{F}=\mathcal{K}_{a}=\mathcal{K}_{h}=0.$
\end{theorem}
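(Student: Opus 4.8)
The plan is to obtain Theorem~4.13 by assembling results already established, with essentially no new computation: the entire content is a bookkeeping of the implications among the properties ``$h$ K\"{a}hler'', ``$a$ K\"{a}hler'', ``$h$ locally projectively flat'', ``$a$ locally projectively flat'', ``$F$ locally projectively flat'' and ``$F$ complex Berwald''. The two load-bearing facts I would use are Proposition~4.2 --- for a complex Randers $F$ on a domain in $\mathbb{C}^{n}$, local projective flatness is equivalent to ($a$ and $F$ projectively related) together with ($a$ locally projectively flat), and then $\mathcal{K}_{F}=\mathcal{K}_{a}=0$ --- and Corollary~4.12, which, once $F$ is known to be generalized Berwald, interchanges the package ``$h$ K\"{a}hler and $a$ locally projectively flat'' with ``$a$ K\"{a}hler and $h$ locally projectively flat''. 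The connective tissue is Corollary~2.3 (a locally projectively flat metric is complex Berwald, hence generalized Berwald, and, being then weakly K\"{a}hler, a locally projectively flat Hermitian metric is K\"{a}hler) together with the discussion following Theorem~4.1 (for a complex Randers metric, ``$F$ complex Berwald'' is equivalent to ``$a$ and $F$ projectively related and $a$ K\"{a}hler'', using that complex Berwald $\Rightarrow$ complex Douglas $\Leftrightarrow$ $a$ and $F$ projectively related).

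For the forward implication (assume $h$ K\"{a}hler and $F$ locally projectively flat) I would first invoke Proposition~4.2 to get that $(a,F)$ is projectively related and $a$ is locally projectively flat; then Corollary~2.3 makes $F$ complex Berwald, in particular generalized Berwald, and --- since $(a,F)$ is projectively related --- this forces $a$ to be K\"{a}hler. With $F$ generalized Berwald I may now apply Corollary~4.12, feeding in ``$h$ K\"{a}hler and $a$ locally projectively flat''; its output ``$a$ K\"{a}hler and $h$ locally projectively flat'' gives, in particular, that $h$ is locally projectively flat. Together with $F$ complex Berwald this is the desired right-hand side.

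For the converse (assume $h$ locally projectively flat and $F$ complex Berwald) I would note first that $h$ is K\"{a}hler (Corollary~2.3), while $F$ complex Berwald yields $F$ generalized Berwald and $F$ complex Douglas, hence $(a,F)$ projectively related (Theorem~4.1) and $a$ K\"{a}hler (discussion after Theorem~4.1). Applying Corollary~4.12 with the input ``$a$ K\"{a}hler and $h$ locally projectively flat'' produces ``$h$ K\"{a}hler and $a$ locally projectively flat'', so $a$ is locally projectively flat; then Proposition~4.2, applied to $(a,F)$ projectively related with $a$ locally projectively flat, makes $F$ locally projectively flat. With $h$ K\"{a}hler this is the left-hand side.

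For the ``moreover'' clause, under either set of hypotheses I then have $(a,F)$ projectively related and, from the ``moreover'' part of Corollary~4.12, $(h,a)$ projectively related, together with $\mathcal{K}_{h}=c_{1}$, $\mathcal{K}_{a}=c_{2}$ and the relation \eqref{IX}. Corollary~4.11 now gives that $(h,F)$ is projectively related and $\tilde{\varepsilon}=const.$, so all three of $h,a,F$ are projectively related; and Proposition~4.2 already yields $\mathcal{K}_{F}=\mathcal{K}_{a}=0$, whence $c_{2}=0$ and, substituting $\tilde{\varepsilon}=const.$ into \eqref{IX} (which annihilates the derivative terms), $c_{1}=\dfrac{n\tilde{\varepsilon}+||W||_{h}^{2}}{n\tilde{\varepsilon}^{2}}\,c_{2}=0$, i.e.\ $\mathcal{K}_{h}=0$; alternatively this drops out of \eqref{V.1} once $\tilde{\varepsilon}=const.$ I expect no computational obstacle here. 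The only point needing care --- the place a sloppy write-up would break --- is to check, at each call, that the ``generalized Berwald'' hypothesis of Corollary~4.12 is genuinely in hand, and that the equivalence ``$F$ complex Berwald $\Leftrightarrow$ $a$ K\"{a}hler'' is invoked only with the projective relation between $a$ and $F$ already available; both are arranged above. (For $n=1$ the Weyl-type projective invariant vanishes identically and the statement is degenerate, so the substantive range is $n\ge 2$, which is exactly where Corollaries~4.11 and 4.12 operate.)
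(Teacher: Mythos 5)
Your proposal is correct and follows essentially the same route as the paper, which offers no written proof but simply derives the theorem ``owing to Theorem 4.9, Proposition 4.2 and Corollary 4.12''; you assemble exactly these ingredients (reaching Theorem 4.9 through Corollary 4.11) and correctly supply the connective steps the paper leaves implicit, namely that local projective flatness of $F$ gives the complex Berwald (hence generalized Berwald) property needed for Corollary 4.12, and that for a complex Randers metric the complex Berwald property is equivalent to ``$a$ and $F$ projectively related and $a$ K\"{a}hler''. Your handling of the ``moreover'' clause, including killing $c_{1}$ via \eqref{IX} once $\tilde{\varepsilon}=const.$ and $c_{2}=0$, matches what the paper's cited results yield.
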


Next, we exemplify such a case through a model given by the generalized
navigation data $(h$, $W$, $||u||_{h}=const.\leq 1)$, with $h$ the standard
Euclidean metric, ($h_{i\bar{j}}=\delta _{i\bar{j}})$, the mild wind $W$ with
constant components, i.e. $W^{k}=\lambda _{k},$ $\lambda _{k}\in \mathbb{C}$%
, $k=\overline{1,n},$ and $\cos \varphi =-1.$ Thus, these lead to $%
||W||_{h}^{2}=\sum_{k=1}|\lambda _{k}|^{2}$, $\tilde{\varepsilon}=const.,$ $%
b_{i}=\frac{1}{\tilde{\varepsilon}}\bar{\lambda}_{i},$ $a_{i\bar{j}}=\frac{%
\delta _{i\bar{j}}}{\tilde{\varepsilon}}+\frac{1}{\tilde{\varepsilon}^{2}}%
\bar{\lambda}_{i}\lambda _{k}$ \ and so, the fundamental metric tensor $g_{i%
\bar{j}}$ of the complex Randers solution $F$ depends only $\eta $, i.e. $F$
is locally Minkowski. Moreover, it is locally projectively flat and $%
\mathcal{K}_{F}=\mathcal{K}_{a}=\mathcal{K}_{h}=0.$

\subsection{Geodesics of conformal solutions to Zermelo's problem}

According to Theorem 3.11 the solutions of generalized ZNP are conformal to
the background metric $h,$ if either $W=0$ or $W\neq 0$ and $\cos \varphi
=0. $ Our next goal is to find the projective relationship between $h$ and $%
F=\rho (z)h$ ($h:=\sqrt{h_{0\bar{0}}}$), where $\rho (z)=\frac{1}{%
||u(z)||_{h}}$ when $W=0$ or $\rho (z)=\frac{1}{\sqrt{\tilde{\varepsilon}}},$
$\tilde{\varepsilon}:=||u(z)||_{h}^{2}-||W||_{h}^{2}$ when $W\neq 0$ and $%
\cos \varphi =0.$ After some computation we find the following connections
between $h$ and $F=\rho (z)h$%
\begin{eqnarray}
\overset{h}{G^{i}} &=&G^{i}-\frac{1}{2\rho ^{2}}\frac{\partial \rho ^{2}}{%
\partial z^{j}}\eta ^{j}\eta ^{i};\;\;\overset{h}{\theta ^{\ast i}}=\theta
^{\ast i}-\frac{1}{\rho ^{2}}\frac{\partial \rho ^{2}}{\partial \bar{z}^{m}}%
\left( h_{0\bar{0}}h^{\bar{m}i}-\bar{\eta}^{m}\eta ^{i}\right) ;
\label{IV.15'} \\
\mathcal{K}_{F}(z,\eta ) &=&\frac{1}{\rho ^{2}}\mathcal{K}_{h}(z,\eta )-%
\frac{2}{F^{2}}\left(\frac{1}{\rho ^{2}}\frac{\partial \rho ^{2}}{\partial \bar{z}%
^{m}}\frac{\partial \rho ^{2}}{\partial z^{j}}-\frac{\partial ^{2}\rho ^{2}}{%
\partial \bar{z}^{m}\partial z^{j}}\right)\eta ^{j}\bar{\eta}^{m},  \notag
\end{eqnarray}%
where the functions $G^{i}$ and $\theta ^{\ast i}$ correspond to the
conformal solution $F.$

\begin{theorem}
Let $(M,h)$ be a Hermitian manifold of complex dimension $n\geq 2$. Then,
the conformal solution $F$ of generalized ZNP and $h$ are projectively
related if and only if $\rho =const.$ (i.e. $||u(z)||_{h}=const.$ when $W=0$
or $\tilde{\varepsilon}:=||u(z)||_{h}^{2}-||W||_{h}^{2}=const.$ when $W\neq
0 $ and $\cos \varphi =0).$ Moreover, any of these assertions implies $%
\overset{h}{G^{i}}=G^{i}$, $\overset{h}{\theta ^{\ast i}}=\theta ^{\ast i}$
and $\mathcal{K}_{F}=\frac{1}{\rho ^{2}}\mathcal{K}_{h}.$
\end{theorem}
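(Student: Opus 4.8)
The plan is to apply Corollary 2.1 directly, using $F=\rho(z)h$ with $h:=\sqrt{h_{0\bar 0}}$ in the role of $\tilde F$, and to read off the projective relationship from the first equation in \eqref{II'}. First I would note that a conformal solution $F=\rho(z)h$ of generalized ZNP is a purely Hermitian metric, hence generalized Berwald (as remarked after Corollary 2.1), so Corollary 2.1 applies with $F$ playing the role of the generalized Berwald metric and $h$ the other metric (or vice versa — the relation is symmetric for the question of projective relatedness). The key computational inputs are already encoded in \eqref{IV.15'}: the projective change $\overset{h}{G^{i}}=G^{i}-\frac{1}{2\rho^{2}}\frac{\partial\rho^{2}}{\partial z^{j}}\eta^{j}\eta^{i}$ shows that the spray-difference term $B^{i}+P\eta^{i}$ is of the required form $P\eta^{i}$ automatically (with $B^{i}=0$ since both metrics are Hermitian, hence complex Douglas, so $\theta^{\ast i}$-type corrections from $B^{i}$ vanish). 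Thus $h$ and $F$ are projectively related precisely when the remaining conditions in \eqref{II'} hold, and the substantive one turns out to be the vanishing of the obstruction carried by $\partial\rho^{2}/\partial\bar z^{m}$.

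For the forward direction I would suppose $h$ and $F$ are projectively related and extract an equation forcing $\rho=const.$ The mechanism mirrors the proof of Theorem 4.9: from the second relation in \eqref{IV.15'}, projective relatedness forces $\overset{h}{\theta^{\ast i}}=\theta^{\ast i}$ (since for Hermitian/Douglas metrics $B^{i}=\frac12(\tilde\theta^{\ast i}-\theta^{\ast i})$ must be compatible with the projective change having only a $P\eta^{i}$ term), whence $\frac{1}{\rho^{2}}\frac{\partial\rho^{2}}{\partial\bar z^{m}}\big(h_{0\bar 0}h^{\bar m i}-\bar\eta^{m}\eta^{i}\big)=0$. Then I would differentiate twice — once with respect to $\eta^{k}$ and once with respect to $\bar\eta^{s}$ — to get $\frac{\partial\rho^{2}}{\partial\bar z^{m}}\big(h_{k\bar s}h^{\bar m i}-\delta_{\bar s}^{\bar m}\delta_{k}^{i}\big)=0$, contract with $h^{\bar s l}$, set $l=k$ and sum, obtaining $(n-1)h^{\bar m i}\frac{\partial\rho^{2}}{\partial\bar z^{m}}=0$; since $n\geq 2$ and $(h^{\bar m i})$ is invertible, this gives $\frac{\partial\rho^{2}}{\partial\bar z^{r}}=0$ for all $r$, hence (as $\rho^2$ is real) $\rho=const.$ Conversely, if $\rho=const.$ then \eqref{IV.15'} immediately collapses to $\overset{h}{G^{i}}=G^{i}$ and $\overset{h}{\theta^{\ast i}}=\theta^{\ast i}$, so the conditions of Corollary 2.1 are trivially satisfied (the projective change is the identity) and $h$ and $F$ are projectively related. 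The final formula $\mathcal{K}_{F}=\frac{1}{\rho^{2}}\mathcal{K}_{h}$ then drops out of the third line of \eqref{IV.15'} once the $\rho$-derivative terms are killed.

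The main obstacle I anticipate is bookkeeping rather than conceptual: one must be careful about where $B^{i}$ genuinely vanishes versus where it is merely absorbed into $P\eta^{i}$, and about the real-versus-complex distinction when concluding $\rho=const.$ from $\partial\rho^{2}/\partial\bar z^{r}=0$ (here one uses that $\rho$ is smooth, positive and real-valued, so vanishing of all $\bar z$-derivatives of $\rho^2$ forces $\rho^2$, hence $\rho$, to be locally constant, and connectedness of $M$ upgrades this to globally constant — or one simply states the result locally). The two sub-cases ($W=0$ with $\rho=1/\|u\|_h$, and $W\neq 0$, $\cos\varphi=0$ with $\rho=1/\sqrt{\tilde\varepsilon}$) are handled uniformly since only the conformal form $F=\rho(z)h$ enters; the translation back to the statements about $\|u\|_h$ and $\tilde\varepsilon$ is then immediate from the definition of $\rho$ in each case.
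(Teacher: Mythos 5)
Your proposal is correct and follows essentially the same route as the paper: both invoke Corollary 2.1 together with the second formula in \eqref{IV.15'} to obtain $\frac{\partial \rho ^{2}}{\partial \bar{z}^{m}}\left( h_{0\bar{0}}h^{\bar{m}i}-\bar{\eta}^{m}\eta ^{i}\right)=0$, then differentiate in $\eta^{k}$ and $\bar{\eta}^{s}$, contract with $h^{\bar{s}l}$, set $l=k$ to get $(n-1)h^{\bar{m}i}\frac{\partial \rho ^{2}}{\partial \bar{z}^{m}}=0$ and hence $\rho=const.$ for $n\geq 2$, with the converse and the curvature relation read off directly from \eqref{IV.15'}. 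The only cosmetic difference is your more explicit discussion of where $B^{i}$ vanishes versus being absorbed into $P\eta^{i}$, which the paper leaves implicit.
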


\begin{proof} We suppose that $h$ and $F$ are projectively related. Due to
Corollary 2.1 and the second formula in (\ref{IV.15'}) we have $\frac{\partial \rho
^{2}}{\partial \bar{z}^{m}}\left( h_{0\bar{0}}h^{\bar{m}i}-\bar{\eta}%
^{m}\eta ^{i}\right) =0$ which by differentiations with respect to $\eta
^{k} $ and $\bar{\eta}^{s}$ and contraction with $h^{\bar{s}l}$ leads to $%
\frac{\partial \rho ^{2}}{\partial \bar{z}^{m}}\left( \delta _{k}^{l}h^{\bar{%
m}i}-h^{\bar{m}l}\delta _{k}^{i}\right) =0$. Setting $l=k$, it results that $%
(n-1)h^{\bar{m}i}\frac{\partial \rho ^{2}}{\partial \bar{z}^{m}}=0.$ Thus,
we have $(n-1)\frac{\partial \rho ^{2}}{\partial \bar{z}^{r}}=0$ what gives
that $\rho ^{2}=const.,$ so $\overset{h}{G^{i}}=G^{i}$ and $\overset{h}{%
\theta ^{\ast i}}=\theta ^{\ast i}.$ Conversely, if $\rho ^{2}=const.,$ by (%
\ref{IV.15'}) it results that $\overset{h}{G^{i}}=G^{i},$ $\overset{h}{\theta
^{\ast i}}=\theta ^{\ast i}$ and $\mathcal{K}_{F}=\frac{1}{\rho ^{2}}%
\mathcal{K}_{h}$ and applying again Corollary 2.1 we obtain that $h$ and $F$ are
projectively related.
\end{proof}

\begin{corollary}
Let $(M,h)$ be a Hermitian manifold of complex dimension $n\geq 2$. Then, the
conformal solution $F$ of generalized ZNP and $h$ are projectively related
if and only if $F$ and $h$ are homothetic.
\end{corollary}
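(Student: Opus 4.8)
The plan is to read this off Theorem 4.15 together with the definitions of \emph{conformal} and \emph{homothetic} complex Finsler metrics recalled in Section 2. First I would fix notation: by Section 4.2 the conformal solution of generalized ZNP has the form $F(z,\eta)=\rho(z)\,h$ with $h:=\sqrt{h_{0\bar{0}}}$, where $\rho(z)=\frac{1}{||u(z)||_h}$ when $W=0$ and $\rho(z)=\frac{1}{\sqrt{\tilde{\varepsilon}}}$ when $W\neq 0$ and $\cos\varphi=0$. In particular $F^{2}=\rho^{2}(z)\,h_{i\bar{j}}\eta^{i}\bar{\eta}^{j}$, so the fundamental tensor of $F$ is $g_{i\bar{j}}=\rho^{2}(z)\,h_{i\bar{j}}$; that is, $F$ is conformal to the background Hermitian metric $h$ with conformal factor $\rho^{2}(z)$.

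Next I would invoke Theorem 4.15: for $n\geq 2$, the conformal solution $F$ and $h$ are projectively related if and only if $\rho=const.$ On the other hand, by the definition of homothety — a conformal change $\tilde{g}_{i\bar{j}}=\rho\,g_{i\bar{j}}$ with $\rho$ constant — the pair $F$ and $h$ is homothetic precisely when the conformal factor $\rho^{2}(z)$ is a positive constant, which, since $\rho>0$, is equivalent to $\rho=const.$ Chaining the two equivalences gives: $F$ and $h$ are projectively related $\iff \rho=const.\iff F$ and $h$ are homothetic, which is the assertion.

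I do not expect any genuine obstacle here; the corollary is essentially a repackaging of Theorem 4.15 once one observes that ``$\rho$ constant'' is exactly the homothety condition for the conformal pair $(F,h)$. The only point deserving a line of care is that homothety for complex Finsler metrics is phrased through the fundamental metric tensors, so one should explicitly note $g_{i\bar{j}}=\rho^{2}(z)\,h_{i\bar{j}}$ and that $\rho^{2}=const.$ is equivalent to $\rho=const.$ by positivity of $\rho$; everything else is immediate from the already established Theorem 4.15.
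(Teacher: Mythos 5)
Your argument is correct and coincides with the paper's (implicit) reasoning: the corollary is stated immediately after the theorem with no separate proof, precisely because, as you observe, the conformal solution satisfies $g_{i\bar{j}}=\rho^{2}(z)h_{i\bar{j}}$, so homothety of $F$ and $h$ is by definition the condition $\rho^{2}=const.$, equivalent to $\rho=const.$ by positivity, which is exactly the criterion for projective relatedness given in the preceding theorem. Nothing is missing.
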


\noindent The generalized navigation data $(h$, $W$, $||u||_{h}=const.\leq
1) $ with $\cos \varphi =0$, $W^{k}=\lambda _{k},$ $\lambda _{k}\in \mathbb{C%
},$ and $||W||_{h}=const.$ lead to the homothetic solutions to $h$. {Indeed,
our assumptions give }$\tilde{\varepsilon}=const.$ and due to the above
Corollary, the conformal solution $F$ is homothetic to $h$.

\begin{corollary}
Let $(M,h)$ be a K\"{a}hler manifold of complex dimension $n\geq 2$. Then,
the conformal solution $F$ of generalized ZNP is K\"{a}hler if and only if $%
\rho =const.$
\end{corollary}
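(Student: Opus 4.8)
The plan is to reduce the K\"ahler condition for the conformal solution $F=\rho(z)h$ (with $h:=\sqrt{h_{0\bar 0}}$ and $\rho(z)=1/||u(z)||_h$ when $W=0$, or $\rho(z)=1/\sqrt{\tilde\varepsilon}$ when $W\neq 0$ and $\cos\varphi=0$) to the vanishing of the object $\theta^{\ast i}$, and then to feed this into the connection formula (\ref{IV.15'}) and conclude by the same contraction trick already used in the proof of Theorem 4.14. First I would note that $F^2=\rho^2(z)\,h_{i\bar j}\eta^i\bar\eta^j$, so the fundamental tensor is $g_{i\bar j}=\rho^2(z)h_{i\bar j}$; thus both $F$ and $h$ are purely Hermitian metrics, for which the K\"ahler and weakly K\"ahler conditions coincide, and weakly K\"ahler is equivalent to $\theta^{\ast i}=0$ (recall that $\theta^{\ast k}=g^{\bar m k}g_{h\bar p}(L^{\bar p}_{\bar j\bar m}-L^{\bar p}_{\bar m\bar j})\eta^h\bar\eta^j$ vanishes exactly in the weakly K\"ahler case). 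Hence ``$F$ is K\"ahler'' becomes ``$\theta^{\ast i}=0$'', while the hypothesis ``$h$ is K\"ahler'' becomes ``$\overset{h}{\theta^{\ast i}}=0$''.

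For the direct implication I would assume $F$ K\"ahler, so $\theta^{\ast i}=0$, and substitute this together with $\overset{h}{\theta^{\ast i}}=0$ into the second formula of (\ref{IV.15'}); since $\rho^2\neq 0$ this forces $\frac{\partial\rho^2}{\partial\bar z^m}\bigl(h_{0\bar 0}h^{\bar m i}-\bar\eta^m\eta^i\bigr)=0$. From here I would repeat verbatim the argument in the proof of Theorem 4.14: differentiate successively with respect to $\eta^k$ and $\bar\eta^s$, contract with $h^{\bar s l}$, and set $l=k$ to get $(n-1)h^{\bar m i}\frac{\partial\rho^2}{\partial\bar z^m}=0$, hence $(n-1)\frac{\partial\rho^2}{\partial\bar z^r}=0$. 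Since $n\geq 2$ this yields $\frac{\partial\rho^2}{\partial\bar z^r}=0$, and as $\rho^2$ is real valued also $\frac{\partial\rho^2}{\partial z^r}=0$, so $\rho^2$, hence $\rho$, is constant. For the converse, if $\rho=const.$ then (\ref{IV.15'}) immediately gives $\overset{h}{G^i}=G^i$ and $\overset{h}{\theta^{\ast i}}=\theta^{\ast i}$; since $h$ is K\"ahler, $\overset{h}{\theta^{\ast i}}=0$, whence $\theta^{\ast i}=0$, i.e. $F$ is weakly K\"ahler and, being purely Hermitian, K\"ahler.

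The only delicate point is the first step, namely establishing cleanly that for the conformal (purely Hermitian) solution the K\"ahler property is equivalent to $\theta^{\ast i}=0$; after that reduction the rest is a direct copy of the contraction computation from Theorem 4.14, so I do not anticipate any genuine obstacle beyond bookkeeping. (One could equally argue by hand: for a purely Hermitian metric $g_{i\bar j}(z)$ the K\"ahler condition reads $\partial_k g_{j\bar m}=\partial_j g_{k\bar m}$; putting $g_{i\bar j}=\rho^2 h_{i\bar j}$ and using that $h$ is K\"ahler reduces it to $\frac{\partial\rho^2}{\partial z^k}h_{j\bar m}=\frac{\partial\rho^2}{\partial z^j}h_{k\bar m}$, and contracting with $h^{\bar m j}$ gives $(n-1)\frac{\partial\rho^2}{\partial z^k}=0$, the same conclusion.)
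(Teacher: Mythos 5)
Your proposal is correct and follows essentially the same route as the paper: reduce the K\"ahler property of the purely Hermitian metrics $F$ and $h$ to the vanishing of $\theta^{\ast i}$ and $\overset{h}{\theta^{\ast i}}$, substitute into the second formula of (\ref{IV.15'}) to get $\frac{\partial\rho^2}{\partial\bar z^m}\left(h_{0\bar 0}h^{\bar m i}-\bar\eta^m\eta^i\right)=0$, and conclude $\rho=const.$ by the contraction argument of Theorem 4.14, with the converse read off from the same formula. The paper's proof is just a terser version of this, leaving the reduction to $\theta^{\ast i}=0$ and the contraction step implicit.
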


\begin{proof} If $F$ is K\"{a}hler, by the second formula (\ref{IV.15'}), it
results that $\frac{\partial \rho ^{2}}{\partial \bar{z}^{m}}\left( h_{0\bar{0}%
}h^{\bar{m}i}-\bar{\eta}^{m}\eta ^{i}\right) =0$ which leads to $\rho
=const. $ Applying again the second formula (\ref{IV.15'}) the converse is also obtained.
\end{proof}

\begin{corollary}
Let $F$ be a conformal solution to generalized ZNP on domain $D$
from $\mathbb{C}^{n}$, $n\geq 2$. If $F$ is homothetic to $h,$ then $h$ is
projectively flat if and only if $F$ is projectively flat. Moreover, any of
these assertions implies $\mathcal{K}_{F}=\frac{1}{\rho ^{2}}c,$ where $%
\mathcal{K}_{h}=c,$ $c\in \mathbb{R}.$
\end{corollary}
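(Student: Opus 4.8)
The plan is to trade the conformal solution for the relations in $(\ref{IV.15'})$ once the homothety hypothesis has been used, and then conclude with Theorem 2.4. Write the conformal solution as $F=\rho(z)\,h$ with $h:=\sqrt{h_{0\bar{0}}}$, where $\rho(z)=\frac{1}{||u(z)||_{h}}$ if $W=0$ and $\rho(z)=\frac{1}{\sqrt{\tilde{\varepsilon}}}$, $\tilde{\varepsilon}:=||u(z)||_{h}^{2}-||W||_{h}^{2}$, if $W\neq 0$ and $\cos\varphi=0$. In either case $F$ is purely Hermitian, $g_{i\bar{j}}=\rho^{2}h_{i\bar{j}}$, hence generalized Berwald and complex Douglas. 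The hypothesis ``$F$ homothetic to $h$'' means exactly $\rho=const.$, so $g_{i\bar{j}}=\rho^{2}h_{i\bar{j}}$ with $\rho^{2}$ a real constant.

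First I would record what $\rho=const.$ buys. A constant rescaling of the fundamental tensor leaves the Chern--Finsler connection untouched, so $(\ref{IV.15'})$ at $\rho=const.$ gives
\[
\overset{h}{G^{i}}=G^{i},\qquad \overset{h}{\theta^{\ast i}}=\theta^{\ast i},\qquad \mathcal{K}_{F}(z,\eta)=\frac{1}{\rho^{2}}\mathcal{K}_{h}(z,\eta);
\]
likewise the Weyl-type invariants coincide, $W_{j\bar{k}h}^{i}=\overset{h}{W_{j\bar{k}h}^{i}}$. Moreover, since a purely Hermitian metric is complex Berwald if and only if it is K\"ahler, and a constant conformal change preserves the K\"ahler condition, $F$ is complex Berwald if and only if $h$ is.

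Next I would feed this into Theorem 2.4, applied to both $F$ and $h$ on $D\subset\mathbb{C}^{n}$. Because $\rho$ is constant,
\[
\frac{1}{F}\frac{\partial F}{\partial z^{k}}\eta^{k}\eta^{i}=\frac{1}{\rho h}\,\rho\,\frac{\partial h}{\partial z^{k}}\eta^{k}\eta^{i}=\frac{1}{h}\frac{\partial h}{\partial z^{k}}\eta^{k}\eta^{i},
\]
which is precisely the spray expression Theorem 2.4 associates to $h$. Combining this with $G^{i}=\overset{h}{G^{i}}$ and the equivalence of the complex Berwald property, Theorem 2.4 gives: $F$ locally projectively flat $\Longleftrightarrow$ $F$ complex Berwald and $G^{i}=\frac{1}{F}\frac{\partial F}{\partial z^{k}}\eta^{k}\eta^{i}$ $\Longleftrightarrow$ $h$ complex Berwald and $\overset{h}{G^{i}}=\frac{1}{h}\frac{\partial h}{\partial z^{k}}\eta^{k}\eta^{i}$ $\Longleftrightarrow$ $h$ locally projectively flat. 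Note that no separate K\"ahler hypothesis is needed (unlike in Theorem 4.13), since by Corollary 2.3 local projective flatness already implies complex Berwald, and for purely Hermitian metrics complex Berwald coincides with K\"ahler. Finally, assuming one --- hence both --- of the equivalent assertions, $h$ is a locally projectively flat purely Hermitian metric on $D\subset\mathbb{C}^{n}$ with $n\geq 2$; by Corollary 2.3 it is complex Berwald with $\overset{h}{W_{j\bar{k}h}^{i}}=0$, so Theorem 2.2 in its Hermitian alternative forces $\mathcal{K}_{h}=c$, a constant on $D$ with $c\in\mathbb{R}$. Substituting into $\mathcal{K}_{F}=\frac{1}{\rho^{2}}\mathcal{K}_{h}$ yields $\mathcal{K}_{F}=\frac{1}{\rho^{2}}c$.

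The computations are all routine; the single point that needs care is that it is the homothety assumption $\rho=const.$, not mere conformality, that collapses $(\ref{IV.15'})$ and makes the spray coefficients, the $\theta^{\ast i}$'s and the complex Berwald status of $F$ and $h$ agree --- this is where the hypothesis is genuinely used. In effect this is the conformal counterpart of Theorem 4.13, but lighter, because $F$ is already purely Hermitian, so the Randers-specific objects $a$, $\beta$ and the Douglas invariant $A$ play no role.
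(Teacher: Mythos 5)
Your proposal is correct and follows essentially the same route as the paper's proof: both use the homothety $\rho=const.$ to collapse the relations (\ref{IV.15'}) into $\overset{h}{G^{i}}=G^{i}$ and $\mathcal{K}_{F}=\frac{1}{\rho^{2}}\mathcal{K}_{h}$, observe that $\frac{1}{F}\frac{\partial F}{\partial z^{k}}\eta^{k}\eta^{i}=\frac{1}{h}\frac{\partial h}{\partial z^{k}}\eta^{k}\eta^{i}$, and transfer the K\"ahler (equivalently, complex Berwald for purely Hermitian metrics) condition between $F$ and $h$ before invoking Theorem 2.2 for the constancy of $\mathcal{K}_{h}$. Your explicit appeal to Theorem 2.4 and the remark on why no separate K\"ahler hypothesis is needed merely make transparent what the paper's argument uses implicitly.
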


\begin{proof} If $h$ is locally projectively flat, then $\overset{h}{G^{i}}=\frac{1%
}{h}\frac{\partial h}{\partial z^{k}}\eta ^{k}\eta ^{i}$ and $h$ is K\"{a}%
hler. Thus, $\frac{1}{F}\frac{\partial F}{\partial z^{k}}\eta ^{k}\eta ^{i}=%
\frac{1}{\rho }\frac{\partial \rho }{\partial z^{j}}\eta ^{j}\eta ^{i}+\frac{%
1}{h}\frac{\partial h}{\partial z^{j}}\eta ^{j}\eta ^{i}=\overset{h}{G^{i}},$
since $\rho =const.$ and, by the first two formulae of (\ref{IV.15'}) it
results that $G^{i}=\frac{1}{F}\frac{\partial F}{\partial z^{k}}\eta ^{k}\eta
^{i} $ and $F$ is K\"{a}hler, i.e. $F$ is locally projectively flat.

Conversely, if $F$ is locally projectively flat then it is K\"{a}hler and $G^{i}=%
\frac{1}{F}\frac{\partial F}{\partial z^{k}}\eta ^{k}\eta ^{i}$. Now, using
again the first two formulae of (\ref{IV.15'}) and $\rho =const.$, we obtain
the local projective flatness for $h.$ Moreover, due to Theorem 2.2
we obtain $\mathcal{K}_{h}=c,$ $c\in \mathbb{R},$ and so, $\mathcal{K}_{F}=%
\frac{1}{\rho ^{2}}c.$
\end{proof}

Now, on a complex manifold $M$ we consider the following generalized
navigation data: $(h$, $W$, $||u(z)||_{h})$ with $\cos \varphi =-1$ and $(h$%
, $\tilde{W}$, $||\tilde{u}(z)||_{h})$ with $\cos \varphi =0$ and $\tilde{W}%
\neq 0$, which produce a complex Randers solution $F$ and a conformal
solution $\tilde{F},$ respectively. A connection between the solutions $F$
and $\tilde{F}$ can be formulated as follows

\begin{proposition}
Let $(M,h)$ be a Hermitian manifold of complex dimension $n\geq 2,$ and $F$
and $\tilde{F}$ the solutions obtained by generalized navigation data $(h$, $%
W$, $||u(z)||_{h})$ with $\cos \varphi =-1$ and $(h$, $\tilde{W}$, $||\tilde{%
u}(z)||_{h})$ with $\cos \varphi =0$ and $\tilde{W}\neq 0$. If $\ F$ is
complex Douglas and $h$ is projectively related with $F$ and homothetic with
$\tilde{F}$, then $F$ and $\tilde{F}$ are projectively related.
\end{proposition}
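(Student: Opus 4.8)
The plan is to route everything through the background metric $h$: I claim that, under the stated hypotheses, $F$ and $\tilde F$ carry exactly the same spray coefficients $G^i$ and the same objects $\theta^{\ast i}$, whence their geodesic equations \eqref{XXX} coincide and the two metrics are projectively related (with the trivial projective change $B^i=P=0$). At bottom this is just transitivity of the projective relation -- the hypothesis gives $h\sim F$, the homothety of $h$ and $\tilde F$ gives $h\sim\tilde F$ (a homothety is in particular a projective change, which for a conformal solution is precisely the corollary of \S4.2 saying that such a solution is projectively related to $h$ iff it is homothetic to $h$), hence $F\sim\tilde F$ -- but I would phrase it through the explicit identities supplied by the earlier theorems, as follows.

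\textbf{Step 1 (complex Randers side).} Since $F$ is of complex Randers type and complex Douglas, the recalled equivalence, for a connected complex Randers space, between being complex Douglas and having $a$ projectively related to $F$ (Theorem 4.1) shows that the intermediary Hermitian metric $a$ attached to the data $(h,||u||_{h},W)$ with $\cos\varphi=-1$ is projectively related to $F$. As $n\geq 2$, as $a$ and $F$ are projectively related, and as $h$ and $F$ are projectively related by hypothesis, Theorem 4.9 applies and yields $\tilde\varepsilon=\mathrm{const.}$ together with $\overset{h}{G^i}=G^i$ and $\overset{h}{\theta^{\ast i}}=\theta^{\ast i}$, where here $G^i$ and $\theta^{\ast i}$ are those of the complex Randers solution $F$.

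\textbf{Step 2 (conformal side and conclusion).} By Proposition 3.6 the metric $\tilde F$ generated by $(h,\tilde W,||\tilde u||_{h})$ with $\cos\varphi=0$, $\tilde W\neq 0$ is the purely Hermitian metric $\tilde F=\rho(z)h$ with $\rho=1/\sqrt{||\tilde u||_{h}^{2}-||\tilde W||_{h}^{2}}$, and the hypothesis that $h$ and $\tilde F$ are homothetic says exactly that $\rho=\mathrm{const.}$ Then both correction terms in \eqref{IV.15'} vanish, leaving $\overset{h}{G^i}=\overset{\tilde F}{G^i}$ and $\overset{h}{\theta^{\ast i}}=\overset{\tilde F}{\theta^{\ast i}}$, the unadorned objects now being those of $\tilde F$. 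Chaining through $h$ gives $\overset{\tilde F}{G^i}=G^i$ and $\overset{\tilde F}{\theta^{\ast i}}=\theta^{\ast i}$, so \eqref{XXX} is literally the same ODE for $F$ and for $\tilde F$; in particular they have the same geodesics as point sets, i.e.\ they are projectively related, with $B^i=\tfrac{1}{2}(\overset{\tilde F}{\theta^{\ast i}}-\theta^{\ast i})=0$ and $P=0$.

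The only step that really needs care is the bookkeeping: one must keep straight that the unadorned $G^i,\theta^{\ast i}$ of Theorem 4.9 belong to the complex Randers solution $F$ while those in \eqref{IV.15'} belong to the conformal solution $\tilde F$, and check that both statements identify them with the same objects $\overset{h}{G^i},\overset{h}{\theta^{\ast i}}$ of $h$, so that the chaining is legitimate. Once the standing hypotheses of the invoked results are verified -- $n\geq 2$; $F$ complex Douglas, which via Theorem 4.1 provides $a\sim F$; and $\rho=\mathrm{const.}$ -- nothing further is needed. The statement can equally be read as a consistency check, since it must in any case follow from transitivity of the projective relation applied to $F\sim h\sim\tilde F$.
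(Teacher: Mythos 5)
Your proposal is correct and follows essentially the same route as the paper: the paper likewise applies Theorem 4.9 on the Randers side (with the Douglas hypothesis supplying, via Theorem 4.1, that $a$ and $F$ are projectively related) to get $\overset{h}{G^{i}}=G^{i}$, $\overset{h}{\theta ^{\ast i}}=\theta ^{\ast i}$, and then invokes Theorem 4.14 and Corollary 4.15 on the conformal side (your direct appeal to \eqref{IV.15'} with $\rho=\mathrm{const.}$ is exactly the content of those results) before chaining through $h$. No substantive difference.
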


\begin{proof} First, owing to Theorem 4.9, we have $\varepsilon
_{1}:=||u(z)||_{h}^{2}-||W||_{h}^{2}=const.$ and $\overset{h}{G^{i}}=G^{i}$,
$\overset{h}{\theta ^{\ast i}}=\theta ^{\ast i}.$ Second, acording to
Theorem 4.14 and Corollary 4.15, $\varepsilon _{2}:=||\tilde{u}%
(z)||_{h}^{2}-||\tilde{W}||_{h}^{2}=const.$ and $\overset{h}{G^{i}}=\tilde{G}%
^{i}$, $\overset{h}{\theta ^{\ast i}}=\tilde{\theta}^{\ast i}$. Hence, $G^{i}=%
\tilde{G}^{i}$, $\theta ^{\ast i}=\tilde{\theta}^{\ast i}$ and so, $F$ and $%
\tilde{F}$ are projectively related.
\end{proof}

\noindent To conclude, let us also mention that the adequate investigation on conformal and weakly conformal real Finsler geometry can be found in particular in \cite{rafie, matveev}.

\subsection{Some examples}

In what follows we consider a mild wind $W$ with non constant components
and a Hermitian manifold $M$ is represented by Hartogs triangle $D=\left\{ (z,w)\in
\mathbb{C}^{2},\;|w|<|z|<1\right\}$. As being in dimension two we denote the local position coordinates $(z^1, z^2)$ by $(z, w)$.

\textbf{I.} \textbf{Example for the case when }$a$\textbf{\ and }$F$\textbf{\
are projectively related} (with\textbf{\ }$||u||_{h}^{2}=\frac{|z|^{2}}{2}$
and $\cos \varphi =-1$). \bigskip \noindent On the Hartogs triangle
we consider the generalized navigation data $\left(h,||u||_{h}^{2}=\frac{|z|^{2}}{%
2},W=-\frac{|z|^{2}-|w|^{2}}{2z}\frac{\partial }{\partial w}\right)$ and $\cos
\varphi =-1,$ with $h_{i\overline{j}}=|z|^{2}\frac{\partial ^{2}}{\partial
z^{i}\partial \overline{z}^{j}}\left( \log \frac{1}{\left( 1-|z|^{2}\right)
\left( |z|^{2}-|w|^{2}\right) }\right) $. These imply $||W||_{h}^{2}=\frac{%
|z|^{2}}{4}$, $W_{1}=\frac{w|z|^{2}}{2(|z|^{2}-|w|^{2})}$, $W_{2}=\frac{%
-z|z|^{2}}{2(|z|^{2}-|w|^{2})},$ $\tilde{\varepsilon}=\frac{|z|^{2}}{4}$ and
then, $b_{1}=\frac{2w}{|z|^{2}-|w|^{2}},$ $b_{2}=\frac{-2z}{|z|^{2}-|w|^{2}}$
as well as %
\begin{equation}
\left( a_{j\bar{k}}(z)\right) _{j,k=1,2}=8\left(
\begin{array}{ll}
\frac{1}{2\left( 1-|z|^{2}\right) ^{2}}+\frac{|w|^{2}}{(|z|^{2}-|w|^{2})^{2}}
& \frac{-w\bar{z}}{(|z|^{2}-|w|^{2})^{2}} \\
\;\;\;\;\;\;\;\;\;\frac{-z\bar{w}}{(|z|^{2}-|w|^{2})^{2}} & \frac{|z|^{2}}{%
(|z|^{2}-|w|^{2})^{2}}%
\end{array}%
\right) .  \label{A}
\end{equation}%
Consequently, we obtain $b^{1}=0,$ $b^{2}=-\frac{|z|^{2}-|w|^{2}}{4z}$ and$%
\;||b||^{2}=\frac{1}{2}.$ Thus, the solution of the Zermelo navigation
problem on $D$ is the complex Randers metric $F=\alpha +|\beta |,$ with $%
\alpha ^{2}=a_{i\overline{j}}\eta ^{i}\overline{\eta }^{j}$ and $|\beta
|^{2}=\frac{4|w\eta ^{1}-z\eta ^{2}|^{2}}{(|z|^{2}-|w|^{2})^{2}}.$ Also, $%
A=0$ (see page 10), i.e. $F$ is generalized Berwald and $\overset{a}{G^{i}}=G^{i}.$
Corresponding to $a$, we have
\begin{equation}
\overset{a}{G^{1}}=\frac{\overline{z}(\eta ^{1})^{2}}{1-|z|^{2}};\;\overset{a%
}{G^{2}}=\frac{\overline{z}w(1-|w|^{2})(\eta ^{1})^{2}}{z(1-|z|^{2})\left(
|z|^{2}-|w|^{2}\right) }-\frac{(|z|^{2}+|w|^{2})\eta ^{1}\eta ^{2}}{z\left(
|z|^{2}-|w|^{2}\right) }+\frac{\overline{w}(\eta ^{2})^{2}}{|z|^{2}-|w|^{2}}.
\label{B}
\end{equation}%
Moreover, we obtain that $a{\ }$is K\"{a}hler ($\Gamma _{l\bar{r}\bar{m}}=0)$
and $\Omega _{\bar{m}}b^{\bar{m}}=0$ which give us that $F$ is a complex
Berwald metric. Thus, $\overset{a}{\theta ^{\ast i}}=\theta ^{\ast i}=0$ and
$a$ and $F$ are projectively related and the corresponding geodesics are
solutions to the differential system%
\begin{equation}
\left\{
\begin{array}{l}
\ddot{\gamma}^{1}+\frac{2\bar{\gamma}^{1}(\dot{\gamma}^{1})^{2}}{1-|\gamma
^{1}|^{2}}=0 \\
\ddot{\gamma}^{2}+\frac{2\bar{\gamma}^{1}\gamma ^{2}\left( 1-|\gamma
^{2}|^{2}\right) (\dot{\gamma}^{1})^{2}}{\gamma ^{1}(1-|\gamma
^{1}|^{2})\left( |\gamma ^{1}|^{2}-|\gamma ^{2}|^{2}\right) }-\frac{%
2(|\gamma ^{1}|^{2}+|\gamma ^{2}|^{2})\dot{\gamma}^{1}\dot{\gamma}^{2}}{%
\gamma ^{1}\left( |\gamma ^{1}|^{2}-|\gamma ^{2}|^{2}\right) }+\frac{2\bar{%
\gamma}^{2}(\dot{\gamma}^{2})^{2}}{|\gamma ^{1}|^{2}-|\gamma ^{2}|^{2}}=0%
\end{array}%
\right. ,  \label{C}
\end{equation}%
By searching the solutions with the properties $\gamma ^{1}=\lambda ,$ $\lambda
\in \mathbb{R},$ and $\gamma ^{2}=\bar{\gamma}^{2},$ the system (\ref{C})
is reduced to the equation $\ddot{\gamma}^{2}+\frac{2\gamma ^{2}(\dot{\gamma}%
^{2})^{2}}{\lambda ^{2}-(\gamma ^{2})^{2}}=0.$ It results that $\dot{\gamma}%
^{2}=k[\lambda ^{2}-(\gamma ^{2})^{2}],$ $k\in \mathbb{R}$, and then $\gamma
^{2}=\frac{\lambda (\mu e^{2\lambda kt}-1)}{\mu e^{2\lambda kt}+1},$ $\mu
\in \mathbb{R}$. So, $\dot{\gamma}=v=k[\lambda ^{2}-(\gamma ^{2})^{2}]\frac{%
\partial }{\partial w},$ $W=-\frac{\lambda ^{2}-(\gamma ^{2})^{2}}{2\lambda }%
\frac{\partial }{\partial w}$ and $u=v-W=[\lambda ^{2}-(\gamma ^{2})^{2}](k+%
\frac{1}{2\lambda })\frac{\partial }{\partial w}.$ The initial conditions $%
||u||_{h}^{2}=\frac{\lambda ^{2}}{2}$ and $\cos \varphi =-1$ lead to $%
\lambda k=\frac{\sqrt{2}-1}{2}.$ Thus, a solution to the differential system
(\ref{C}) is $\gamma (t)=\left(\lambda ,\frac{\lambda (\mu e^{(\sqrt{2}-1)t}-1)}{%
\mu e^{(\sqrt{2}-1)t}+1}\right).$ Here, $W,$ $v$ and $u$ are collinear, their
components depend on $t$ (they are not constant), however $||v||_{h}=\frac{\sqrt{2}%
-1}{2}\gamma ^{1}(t)$ is constant because $\gamma ^{1}=\lambda .$ Moreover, $%
l${$_{F}(\gamma )=1>l_{a}(\gamma )=2-\sqrt{2}$ and $l_{h}(\gamma )=$}$\frac{%
\sqrt{2}-1}{2}\lambda.$ The metrics $h$ and $F$ are not projectively related because $%
\tilde{\varepsilon}$ is not constant.

Next, in order to complement our findings let us consider the same data $\left(h,||u||_{h}^{2}=\frac{|z|^{2}}{2}\right)$ in the absence of the wind. Therefore, it results that $\tilde{\varepsilon}%
=||u||_{h}^{2}=\frac{|z|^{2}}{2}$. So, the solution of the Zermelo
navigation problem on Hartogs triangle is the conformal metric $\tilde{F}=%
\frac{\sqrt{2}}{|z|}h,$ to $\tilde{g}_{j\bar{k}}=\frac{2}{|z|^{2}}h_{j\bar{%
k}}$ which is not homothetic to $h$ due to the fact that $\tilde{\varepsilon}$ is not
constant. As above, we find that the geodesics corresponding to $\tilde{F}$
are $\gamma _{1,2}(t)=\left(\lambda ,\frac{\lambda (\mu e^{\pm \sqrt{2}t}-1)}{\mu
e^{\pm \sqrt{2}t}+1}\right),$ $k\lambda =\pm \frac{\sqrt{2}}{2},$ $\mu \in \mathbb{%
R}$ and $v=u=k[\lambda ^{2}-(\gamma ^{2})^{2}]\frac{\partial }{\partial w}.$
It results that $l_{\tilde{F}}(\gamma _{1,2})=1$ and $l_{h}(\gamma
_{1,2})=|\lambda |\frac{\sqrt{2}}{2}$\ on $[0,1],$ but
here $\gamma _{1,2}$\ are not the geodesics of $h.$ Note that the solutions $F$ and $\tilde{F}$ are projectively related.

\

\textbf{II.} \textbf{Example for the case when} $h$\textbf{\ and }$%
F$\textbf{\ are conformal } (with $||u||_{h}^{2}=\frac{|z|^{2}}{2}$
and $\cos \varphi =0$).

Again, on the Hartogs triangle we consider the Hermitian metric $h_{i%
\overline{j}}=|z|^{2}\frac{\partial ^{2}}{\partial z^{i}\partial \overline{z}%
^{j}}\left( \log \frac{1}{\left( 1-|z|^{2}\right) \left(
|z|^{2}-|w|^{2}\right) }\right) $ and the generalized navigation data $%
\left(h,|||u||_{h}^{2}=\frac{|z|^{2}}{2},W=i\frac{|z|^{2}-|w|^{2}}{2z}\frac{%
\partial }{\partial w}\right)$ and $\cos \varphi =0.$ Hence $||W||_{h}^{2}=\frac{%
|z|^{2}}{4}$, $\tilde{\varepsilon}=\frac{|z|^{2}}{4}$ and $g_{j\bar{k}}=%
\frac{4}{|z|^{2}}h_{j\bar{k}}$ and the solution of the Zermelo navigation
problem on $D$ is the Hermitian metric $F=\frac{2}{|z|}h$ which is conformal
to $h.$ Also, we obtain $G^{i}$ given by (\ref{B}) and $F{\ }$is K\"{a}%
hler. The corresponding geodesics are the solutions of the differential system (%
\ref{C}) which admits $\gamma (t)=\left(\lambda ,\frac{\lambda (\mu
e^{2\lambda kt}-1)}{\mu e^{2\lambda kt}+1}\right),$ $\lambda ,\mu ,k\in \mathbb{R}$%
, and $\dot{\gamma}=v=k[\lambda ^{2}-(\gamma ^{2})^{2}]\frac{\partial }{%
\partial w},$ $W=i\frac{\lambda ^{2}-(\gamma ^{2})^{2}}{2\lambda }\frac{%
\partial }{\partial w}$, with $u=v-W=[\lambda ^{2}-(\gamma ^{2})^{2}](k-\frac{i%
}{2\lambda })\frac{\partial }{\partial w}.$ The initial conditions $%
||u||_{h}^{2}=\frac{\lambda ^{2}}{2}$ and $\cos \varphi =0$ lead to $\lambda
k=\pm \frac{1}{2}.$ Thus, the geodesics corresponding to the conformal
metric $F$ are $\gamma (t)=\left(\lambda ,\frac{\lambda (\mu e^{\pm t}-1)}{\mu
e^{\pm t}+1}\right)$ and $W,$ $v$ and $u$ are not collinear $(W$ and $v$ are
orthogonal) and of not constant components. We also obtain that {$%
l_{F}(\gamma )=1$ and $l_{h}(\gamma )=$}$\frac{|\lambda |}{2}.$ Moreover, the angle between $u$ and $W$ is constant. Indeed, let $\phi :=\measuredangle (u,W)$ and $\text{Re} h(u,\bar{W})=||u||_{h}||W||_{h}\cos
\phi .$ Since $h(u,\bar{W})=\frac{\lambda ^{2}}{4}(-1\pm i)$ it results that Re$%
h(u,\bar{W})=-\frac{\lambda ^{2}}{4}.$ So, $\cos \phi =-\frac{\sqrt{2}}{2},$
i.e. $\measuredangle (u,W)=\frac{3\pi }{4}=const.$

The same navigation data in the absence of the wind $W$
lead to the same solution as in example I. Here we have $l_{\tilde{F%
}}(\gamma _{1,2})=1$ and $l_{h}(\gamma _{1,2})=|\lambda
|\frac{\sqrt{2}}{2}$ on $[0,1].$ Also, $\gamma _{1,2}$
are not the geodesics of $h.$

\

\textbf{III. Example for the case }$h$\textbf{, }$a$\textbf{\ and }$F$%
\textbf{\ are projectively related } (with\textbf{\ }$||u||_{h}^{2}=\frac{1}{2%
}$ and $\cos \varphi =-1$). \noindent On the Hartogs
triangle we consider the generalized navigation data $(h,||u||_{h}^{2}=\frac{%
1}{2},W=-\frac{|z|^{2}-|w|^{2}}{2z}\frac{\partial }{\partial w})$ and $\cos
\varphi =-1,$ with $h_{i\overline{j}}=\frac{\partial ^{2}}{\partial
z^{i}\partial \overline{z}^{j}}\left( \log \frac{1}{\left( 1-|z|^{2}\right)
\left( |z|^{2}-|w|^{2}\right) }\right) $. These imply $||W||_{h}^{2}=\frac{1%
}{4}$, $W_{1}=\frac{w}{2(|z|^{2}-|w|^{2})}$, $W_{2}=\frac{-z}{%
2(|z|^{2}-|w|^{2})},$ $\tilde{\varepsilon}=\frac{1}{4}$ and then, $b_{1}=%
\frac{2w}{|z|^{2}-|w|^{2}},$ $b_{2}=\frac{-2z}{|z|^{2}-|w|^{2}}$ and $a_{j%
\bar{k}}(z)$ has the same form as in (\ref{A}). Also, $b^{1}=0,$ $b^{2}=-%
\frac{|z|^{2}-|w|^{2}}{4z}$ and$\;||b||^{2}=\frac{1}{2}.$ Thus, the solution
of the Zermelo navigation problem on $D$ is the same complex Berwald metric
as in example I. The difference is that $\overset{h}{G^{i}}=\overset{a}{%
G^{i}}=G^{i}$ with the same form as in (\ref{B}). Another change is that
here $h$ is K\"{a}hler. Thus, $\overset{h}{\theta ^{\ast i}}=\overset{a}{%
\theta ^{\ast i}}=\theta ^{\ast i}=0$ and $h$, $a$ and $F$ are projectively
related and the corresponding geodesics are solutions of the differential
system (\ref{C}) which admits the solution $\gamma (t)=\left(\lambda ,\frac{%
\lambda (\mu e^{2\lambda kt}-1)}{\mu e^{2\lambda kt}+1}\right),$ $\lambda ,\mu \in
\mathbb{R}$, with $\dot{\gamma}=v=k[\lambda ^{2}-(\gamma ^{2})^{2}]\frac{%
\partial }{\partial w},$ $W=-\frac{\lambda ^{2}-(\gamma ^{2})^{2}}{2\lambda }%
\frac{\partial }{\partial w}$ and $u=v-W=[\lambda ^{2}-(\gamma ^{2})^{2}](k+%
\frac{1}{2\lambda })\frac{\partial }{\partial w}.$ The initial conditions $%
||u||_{h}^{2}=\frac{1}{2}$ and $\cos \varphi =-1$ lead to $\lambda k=\frac{%
\sqrt{2}-1}{2}.$ Thus, a solution to the differential system (\ref{C}) is $%
\gamma (t)=\left(\lambda ,\frac{\lambda (\mu e^{(\sqrt{2}-1)t}-1)}{\mu e^{(\sqrt{2%
}-1)t}+1}\right).$ Here, the vectors $W,$ $v$ and $u$ are collinear. Their components depend on $t$ (they are not constant), however $||v||_{h}$ is constant. Moreover, {$%
l_{F}(\gamma )=1>l_{a}(\gamma )=2-\sqrt{2}>l_{h}(\gamma )=$}$\frac{\sqrt{2}-1%
}{2}.$

Next, let us consider the same data $(h,||u||_{h}^{2}=\frac{1}{2})$ in the
absence of the wind. Therefore, it results that $\tilde{\varepsilon}%
=||u||_{h}^{2}=\frac{1}{2}$ and $\tilde{g}_{j\bar{k}}(z)=2h_{j\bar{k}}.$ Thus,
the solution of the Zermelo navigation problem on Hartogs triangle is the
conformal metric $\tilde{F}=\sqrt{2}h,$ homothetic to $h.$ As above, the
geodesics corresponding to $\tilde{F}$ are $\gamma _{1,2}(t)=\left(\lambda ,\frac{%
\lambda (\mu e^{\pm \sqrt{2}t}-1)}{\mu e^{\pm \sqrt{2}t}+1}\right),$ $\lambda
k=\pm \frac{\sqrt{2}}{2},$ $\mu \in \mathbb{R}$ and $v=u=k[\lambda
^{2}-(\gamma ^{2})^{2}]\frac{\partial }{\partial w}.$ It results that $l_{\tilde{%
F}}(\gamma _{1,2})=1$ and {$l_{h}(\gamma _{1,2})=\frac{\sqrt{2}}{2%
}$} on $[0,1].$ So, $l_{F}(\gamma )=1>l_{h}(\gamma
_{1,2})=\frac{\sqrt{2}}{2}.$

\

\textbf{IV. Example for the case when }$h$\textbf{\ and }$F$%
\textbf{\ are homothetic. }Now, on the Hartogs triangle with the Hermitian
metric $h_{i\overline{j}}=\frac{\partial ^{2}}{\partial z^{i}\partial
\overline{z}^{j}}\left( \log \frac{1}{\left( 1-|z|^{2}\right) \left(
|z|^{2}-|w|^{2}\right) }\right) $ we consider the generalized navigation
data $(h,||u||_{h}^{2}=\frac{1}{2},W=i\frac{|z|^{2}-|w|^{2}}{2z}\frac{%
\partial }{\partial w})$ and $\cos \varphi =0.$ Hence $||W||_{h}^{2}=\frac{1%
}{4}$, $\tilde{\varepsilon}=\frac{1}{4}$ and $g_{j\bar{k}}=a_{j\bar{k}}=4h_{j%
\bar{k}}$ and the solution of the Zermelo navigation problem on $D$ is the
Hermitian metric $F=2h=2\sqrt{h_{i\overline{j}}\eta ^{i}\overline{\eta }^{j}}
$ which is homothetic to $h.$ Also, we obtain that $\overset{h}{G^{i}}=G^{i}$
given by (\ref{B}); $h$ and $F{\ }$are K\"{a}hler. The corresponding
geodesics are the solutions of the differential system (\ref{C}) which admits
 $\gamma (t)=\left(\lambda ,\frac{\lambda (\mu e^{2\lambda kt}-1)}{%
\mu e^{2\lambda kt}+1}\right),$ $\lambda ,\mu ,k\in \mathbb{R}$, and $\dot{\gamma}%
=v=k\left[\lambda ^{2}-(\gamma ^{2})^{2}\right]\frac{\partial }{\partial w},$ $W=i\frac{%
\lambda ^{2}-(\gamma ^{2})^{2}}{2\lambda }\frac{\partial }{\partial w}$, with $%
u=v-W=[\lambda ^{2}-(\gamma ^{2})^{2}](k-\frac{i}{2\lambda })\frac{\partial
}{\partial w}.$ The initial conditions $||u||_{h}^{2}=\frac{1}{2}$ and $\cos
\varphi =0$ lead to $\lambda k=\pm \frac{1}{2}.$ Thus, the geodesics
corresponding to the homothetic metrics $h$ and $F$ are $\gamma (t)=\left(\lambda
,\frac{\lambda (\mu e^{\pm t}-1)}{\mu e^{\pm t}+1}\right)$ and the vectors $W,$ $v$ and $u$
are not collinear $(W$ and $v$ are orthogonal) and of not
constant components. Also, {$l_{F}(\gamma )=1>l_{h}(\gamma )=$}$\frac{1}{2}%
. $

Finally, we consider the same Zermelo's structure $(h,||u||_{h}^{2}=\frac{1}{2})$ in the absence of the wind $W$. This leads
to the same solution $\tilde{F}$ as in Example III. Here we get $l_{\tilde{F}%
}(\gamma _{1,2})=1$ and {$l_{h}(\gamma _{1,2})=\frac{%
\sqrt{2}}{2}$} on $[0,1].$ Therefore, $l_{F}(\gamma
)=1>l_{h}(\gamma _{1,2})=\frac{\sqrt{2}}{2}.$

\

\noindent We conclude by remarking that
\begin{remark}
Owing to Proposition 4.18 we have that the solutions obtained in Examples III and IV, with the considered perturbations, are projectively related.
\end{remark}

\

\end{document}